\documentclass[11pt]{article}

\usepackage{epsfig,epsf,fancybox}
\usepackage{amsmath}
\usepackage{mathrsfs}
\usepackage{amssymb}
\usepackage{graphicx}
\usepackage{color}
\usepackage{multirow}
\usepackage{paralist}
\usepackage{verbatim}
\usepackage{galois}
\usepackage{algorithm}
\usepackage[noend]{algorithmic}
\usepackage{boxedminipage}
\usepackage{booktabs}
\usepackage{accents}
\usepackage{stmaryrd}
\usepackage[table]{xcolor}
\usepackage{hhline}
\usepackage{tikz}
\usetikzlibrary{arrows.meta,positioning,calc}

\usepackage{subfig}

\usepackage{natbib}

\usepackage{url}
\usepackage[colorlinks,linkcolor=magenta,citecolor=blue, pagebackref=true,backref=true]{hyperref}
\renewcommand*{\backrefalt}[4]{%
    \ifcase #1 \footnotesize{(Not cited.)}%
    \or        \footnotesize{(Cited on page~#2.)}%
    \else      \footnotesize{(Cited on pages~#2.)}%
    \fi}

\newtheorem{theorem}{Theorem}[section]

\newtheorem{lemma}[theorem]{Lemma}
\newtheorem{proposition}[theorem]{Proposition}

\newtheorem{definition}[theorem]{Definition}

\newtheorem{assumption}[theorem]{Assumption}

\numberwithin{equation}{section}

\newenvironment{proof}[1][Proof]{\par\noindent{\em #1.}\ }{\hfill $\Box$ \vskip 0.4cm}

\newcommand{\EE}{\mathbb{E}}

\newcommand{\grad}{\nabla}

\newcommand{\conv}{\textnormal{conv}}

\newcommand{\HH}{\mathbf H}

\newcommand{\argmin}{\mathop{\rm argmin}}

\newcommand{\DCal}{\mathcal{D}}
\newcommand{\ECal}{\mathcal{E}}
\newcommand{\GCal}{\mathcal{G}}
\newcommand{\HCal}{\mathcal{H}}

\newcommand{\OCal}{\mathcal{O}}
\newcommand{\PCal}{\mathcal{P}}

\newcommand{\XCal}{\mathcal{X}}

\newcommand{\br}{\mathbb{R}}

\newcommand{\ba}{\begin{array}}
\newcommand{\ea}{\end{array}}
\newcommand{\ACal}{\mathcal{A}}
\newcommand{\BCal}{\mathcal{B}}

\newcommand{\FCal}{\mathcal{F}}

\newcommand{\TCal}{\mathcal{T}}
\newcommand{\bp}{\mathbb{P}}

\begin{document}

\begin{center}

{\bf{\LARGE{Nonsmooth Optimization via Orthogonalized Momentum}}}

\vspace*{.2in}
{\large{
\begin{tabular}{c}
Lexiao Lai$^\dagger$ \and Tianyi Lin$^\diamond$ \and Jiayu Zhang$^\diamond$ \\
\end{tabular}
}}

\vspace*{.2in}

\begin{tabular}{c}
Department of Industrial Engineering and Operations Research$^\diamond$ \\
Columbia University \\ 
Department of Mathematics, The University of Hong Kong$^\dagger$
\end{tabular}

\vspace*{.2in}

\today

\vspace*{.2in}

\begin{abstract}
Modern real application problems involve matrix-valued parameters, yet conventional optimizers treat them as vectors, thereby motivating matrix-aware methods that exploit input-output geometry, such as Muon which orthogonalizes the momentum matrices before parameter updates. Its empirical success raises a \textit{conceptual} question: can orthogonalized momentum remain effective beyond smooth optimization? This paper studies this question for locally Lipschitz functions using a generalized derivative framework compatible with backpropagation. Our first contribution is to identify a key limitation: for every fixed momentum factor $\beta \in [0,1)$, Muon can fail to approach the global optimal solution of a convex Lipschitz objective from almost every initialization, when step sizes adapt to the full gradient history. The failure can occur even along bounded iterates. Our example is inspired by the one of~\citet{Parshakova-2026-Muon} which only covers $\beta \in [0, \frac{1}{2})$. Then, we show that the obstruction lies in fixed momentum rather than orthogonalization. Indeed, when the momentum factor is \textit{adaptive} and approaches $1$ together with a vanishing step size, Muon recovers asymptotic convergence for nonconvex nonsmooth optimization under the boundedness and regularity conditions. Moreover, we propose a MAGD method, which combines orthogonalized momentum with gradient and weights them based on their relative progress. MAGD retains asymptotic convergence in nonconvex settings and achieves a convergence rate of $O(\min\{m,n\}\epsilon^{-2})$ in convex settings. A lower bound is established to show the optimal dimension dependence. Experiments on synthetic problems, image classification, and LLM pretraining show that our method is a \textit{simple} and \textit{practical} alternative to Muon. Together, our results characterize when orthogonalized momentum fails without smoothness and how it can be made reliable and we hope that the analysis may be useful more broadly.
\end{abstract}
\end{center}

\section{Introduction} \label{sec:intro}
A common geometric principle underlying first-order methods is that first-order information is converted into an update through a chosen norm or regularizer. Classical algorithms have adopted Euclidean or coordinate-wise geometry after vectorizing the decision variables. This framework includes stochastic gradient descent (SGD), momentum methods, and adaptive methods such as AdaGrad, RMSProp, and Adam~\citep{Robbins-1951-Stochastic, Polyak-1964-Some, Nesterov-1983-Method, Sutskever-2013-Importance, Duchi-2011-Adaptive, Tieleman-2012-Lecture, Kingma-2015-Adam}. For matrix-valued variables, however, vectorization obscures their input-output structure. The operator norm instead directly measures how strongly a matrix amplifies its inputs. This observation has motivated matrix-aware methods whose geometry respects the role of a matrix as a linear map~\citep{Large-2024-Scalable, Bernstein-2024-Old, Bernstein-2025-Modular, Pethick-2025-Training}. Neural network training is a prominent application because most trainable parameters are naturally organized as matrix-valued layers, but the underlying problem is more general: first-order optimization in matrix-norm geometry.

A simple matrix-aware first-order method forms a momentum matrix and orthogonalizes it before taking a main step. This update is known as Muon~\citep{Jordan-2024-Muon, Liu-2025-Muon}. For a matrix variable $W_t \in \br^{m\times n}$, the inexact Muon recursion is
\begin{equation*}
G_t \in \DCal_f(W_t),\quad M_{t+1}=\beta_tM_t+(1-\beta_t)G_t,\quad \widetilde M_{t+1}=\tfrac{M_{t+1}}{\|M_{t+1}\|_F+\sigma_t} \quad W_{t+1} \in W_t - \eta_t \operatorname{polar}_{\delta_t} (\widetilde M_{t+1}),
\end{equation*}
where $\DCal_f$ represents the first-order information returned by backpropagation and $\operatorname{polar}_{\delta}(\widetilde M)$ is the set of all inexact orthogonalizations of $\widetilde M\in\br^{m\times n}$ within an inexactness factor $\delta\in[0,1)$. In practice, we apply a small number of polynomial iterations to $\widetilde M$ to realize $\operatorname{polar}_\delta$, which preserves the singular subspaces of $M$ while pushing its nonzero singular values toward $1$~\citep{Jordan-2024-Muon, Amsel-2026-Polar}.

The empirical success of Muon raises a conceptual question: \textit{can orthogonalized momentum remain effective for nonsmooth optimization?} Recent analyses have shown the convergence of inexact Muon with finitely many inner iterations for smooth optimization~\citep{Shulgin-2026-Beyond,Kim-2026-Convergence}. Smoothness provides a local upper bound on the objective, ensuring that a sufficiently small step along a well-aligned direction produces a predictable decrease. This argument is unavailable for locally Lipschitz objectives, whose behavior may change abruptly at nonsmooth points \citep{wang2005subdifferentiability}. Orthogonalization creates an additional difficulty since it removes the magnitude information from the momentum matrix. Even an arbitrarily small nonzero momentum can produce a full operator-norm step. It is thus unclear whether momentum stabilizes these normalized updates or leads to nonconvergence. 

The recent work of \citet{Parshakova-2026-Muon} gives a negative answer for convex objectives. Indeed, for diagonal cases, Muon reduces to signed momentum~\citep{Bernstein-2018-Signsgd}, allowing the counterexamples for sign-based methods to be embedded into the matrix recursion. Based on this insight, they construct convex Lipschitz objectives on which Muon with fixed momentum does not converge. One result covers every fixed momentum factor $\beta \in [0, 1)$ but restricts the step size schedule and initialization, while another result allows the step size chosen from the full gradient information but only covers $\beta \in [0, \frac{1}{2})$. They also combine error feedback with Muon and restores the convergence for convex objectives. Their results establish a novel obstruction but leave its precise source unresolved. In particular, does almost everywhere failure extend to every fixed factor $\beta \in [0,1)$? Can an adaptive momentum schedule restore convergence, even when orthogonalization is computed inexactly? Can one obtain a finite-time guarantee with the optimal dimension dependence while retaining the effectiveness of orthogonalized momentum?

We study these questions in a nonconvex and locally Lipschitz setting. We formulate our analysis using conservative fields, set-valued mappings that encode generalized first-order information for nonsmooth functions.~\citep{Bolte-2021-Conservative}. This framework accommodates the outputs of backpropagation while retaining a chain rule. We identify the nonconvergence counterexamples and derive convergence results using conservative fields. The convex Lipschitz function class is a simple and clean benchmark: every stationary point is globally optimal, Lipschitz continuity bounds the subgradients, and Euclidean subgradient descent has the optimal $O(\epsilon^{-2})$ worst-case dependence~\citep{Nemirovski-1983-Problem,Beck-2017-First,Nesterov-2018-Lectures}. Nonconvergence counterexample is established on this class and cannot be attributed to a nonconvex landscape or unbounded first-order information.

The negative result comes from the interaction between the fixed momentum and orthogonalization. At a nonsmooth point, the first-order information can change abruptly. Then, a fixed momentum factor transmits this change at a rate independent of the step size, while orthogonalization turns every nonzero momentum into a normalized direction. Our construction exploits this behavior to preserve a nonzero component of the objective, regardless of chosen step sizes. For the positive results, we synchronize the momentum and parameter updates by setting
\begin{equation} \label{eq:schedule}
\beta_t = 1-\tau\eta_t,\quad \eta_t>0,\quad \eta_t \to 0,\quad \sum_{t=0}^{+\infty} \eta_t = +\infty.
\end{equation}
Related one-timescale couplings appear in stochastic heavy-ball schemes~\citep[Eq.~(4)]{Gadat-2018-Stochastic}, nonsmooth Adam-family methods~\citep{Xiao-2024-Adam}, and, for $\tau=1$, a variance-reduced Muon analysis~\citep[Theorem~4.2]{Chang-2025-Convergence}. Under the schedule \eqref{eq:schedule}, the momentum and parameter updates evolve on the same time scale and controlled approximation errors vanish asymptotically. This yields a Lyapunov function $\varphi(W,M)=f(W)+\tau^{-1}\|M\|_{\mathrm{nuc}}$, for which the conservative field chain rule and operator-nuclear norm duality yield $\frac{\mathrm{d}}{\mathrm{d}s}\varphi(W(s),M(s))=-\|M(s)\|_{\mathrm{nuc}}$. Coupling the two updates restores a dissipative structure which is absent under fixed momentum and yields the asymptotic convergence. To obtain a finite-time guarantee for convex objectives, we propose MAGD, which combines orthogonalized momentum with gradient descent and weights them based on their relative progress. The weighting rule is inspired by the one of~\citet{Cesa-2006-Prediction} and allows our method to inherit the safeguard without modifying the Muon direction.

\paragraph{Contributions.} The contribution of this paper consists in studying when orthogonalized momentum fails and how it can be made reliable beyond smoothness, and in establishing matching upper and lower complexity bounds for convex Lipschitz optimization in operator-norm geometry. In further detail:
\begin{enumerate}
\item We establish a sharp separation between fixed and adaptive momentum beyond smoothness. For every fixed $\beta \in [0,1)$, Muon fails to approach the global optimal solution of a convex and Lipschitz objective from almost every initialization, even when the step sizes adapt to full gradient history. The failure can occur even along bounded iterates. By using the coupled schedule in Eq.~\eqref{eq:schedule} and controlled inner approximation, inexact Muon converges asymptotically and the objective values converge to the global optimal value for convex objectives. 
\item We establish asymptotic and finite-time guarantees for MAGD. Under the conservative field model, an appropriate diminishing step size schedule yields stationarity of every cluster point as well as convergence of the objective values. For convex Lipschitz objectives on $\br^{m \times n}$, MAGD can return an $\epsilon$-optimal point within $O(\min\{m,n\}\epsilon^{-2})$ iterations. A lower bound is then established to show the optimal dimension dependence. Experiments on synthetic problems, image classification, and LLM pretraining show that our method is a simple and practical alternative to Muon. 
\end{enumerate}

\paragraph{Further related work.} In addition to the aforementioned works, we provide a few additional remarks regarding related work on Muon and related methods, nonsmooth optimization, lower bound analysis, and normalized and sign methods.

The existing theory of Muon and related scale-invariant methods has developed along two lines. Early analyses consider the exact polar update and establish convergence for several variants involving momentum, constraints, star-convexity, weight decay and variance reduction~\citep{Li-2025-Note, Shen-2025-Convergence, Sato-2025-Convergence, Sfyraki-2025-Lions,Chang-2025-Convergence}. Recent works have moved toward the inexact orthogonalization used in practice.~\citet{Shulgin-2026-Beyond} quantified how approximation error interacts with the step size and momentum, ~\citet{Kim-2026-Convergence} proved convergence with finite Newton-Schulz iterations,~\citet{Qian-2026-Convergence} studied inexact Muon in the degenerate spectral regime under generalized smoothness condition, and~\citet{Zhang-2026-Scale} established optimal rates for stochastic scale-invariant methods in operator-norm geometry and implemented the resulting matrix updates using inexact polar computations. Complementary analyses examine spectral updates through one-step progress, local curvature models, and non-Euclidean trust regions~\citep{Davis-2025-Spectral, Su-2025-Isotropic, Kovalev-2025-Understanding}. Alongside this theory, a rapidly growing literature develops practical Muon variants and scalable implementations, including large-scale Muon and MuonClip, NorMuon and Muon$^2$, adaptive and Gram-based orthogonalization and full-stack distributed systems~\citep{Liu-2025-Muon, Team-2025-Kimi, Li-2026-Normuon,Liu-2026-Muon2, Zhuang-2026-Amo, Zhang-2026-Gram, Khona-2026-Soap, Amsel-2026-Dion3}. Existing convergence theory remains tied to smoothness or generalized smoothness, whereas we ask what remains true for locally Lipschitz objectives.

The framework used here belongs to a line of work on set-valued first-order dynamics. Differential inclusion methods connect stochastic approximation with continuous-time limits~\citep{Benaim-2005-Stochastic} and stochastic subgradient methods converge on tame objectives under some regularity conditions~\citep{Davis-2020-Stochastic}. For automatic differentiation,~\citet{Bolte-2021-Conservative} has introduced conservative fields and developed their calculus, while~\citet{Lewis-2021-Structure} characterized their specific structure on semialgebraic functions through Whitney stratifications, building on earlier variational analysis of stratifiable functions~\citep{Bolte-2007-Clarke}. Related tools have since been applied to adaptive first-order methods~\citep{Xiao-2024-Adam, Hu-2025-Learning, Ding-2025-Stochastic, Ding-2025-Adam}, while~\citet{Jiang-2026-Adaptive} obtained the finite-time guarantees for nonconvex nonsmooth optimization using matrix online learning methods through smoothing. Our analysis instead applies conservative field dynamics directly to orthogonalized momentum for nonsmooth optimization.

Beyond the aforementioned classical convex lower bounds, the information-theoretic arguments give minimax bounds for stochastic first-order oracles, while local oracle constructions extend such results to randomized algorithms and general normed spaces~\citep{Agarwal-2012-Information, Braun-2017-Lower, Braun-2024-Corrections}. In smooth nonconvex settings, \citet{Carmon-2020-Lower, Carmon-2021-Lower} established deterministic oracle lower bounds for finding stationary points, and \citet{Arjevani-2023-Lower} developed corresponding stochastic lower bounds. In nonsmooth nonconvex settings, the notion of Clarke stationarity~\citep{Clarke-1990-Optimization} yields impossibility and lower bound results for first-order methods~\citep{Zhang-2020-Complexity, Kornowski-2022-Oracle}. Our lower bound is derived for convex first-order optimization in matrix-norm geometry. Indeed, we adapt hidden-direction arguments to operator-norm geometry using nearly orthogonal rank-one matrices and operator-nuclear norm duality.

Finally, the diagonal reduction of polar updates connects Muon with normalized and sign methods, which likewise discard magnitude information~\citep{Balles-2018-Dissecting, Bernstein-2018-Signsgd, Cutkosky-2020-Momentum}. In this context,~\citet{Karimireddy-2019-Error} has shown that sign subgradient descent can fail on a convex Lipschitz objective and that error feedback can restore convergence. Error feedback has since been developed for biased compression and for Muon and related methods~\citep{Beznosikov-2023-Biased,Demidovich-2023-Guide, Gruntkowska-2026-Error, Parshakova-2026-Muon}. Our MAGD method follows a different route by leaving the inexact Muon direction unchanged and controlling its influence through a separate gradient branch, yielding an improved performance across different tasks. 

\paragraph{Organization.} The remainder is organized as follows. In Section~\ref{sec:prelim}, we review conservative fields, Muon, and black-box first-order oracle model. In Section~\ref{sec:results}, we present the counterexample, establish asymptotic convergence for inexact Muon under increasing momentum, introduce MAGD, and prove the matching upper and lower bounds. In Section~\ref{sec:exp}, we conduct the experiments on synthetic and real data and the numerical results show the efficiency of our method. We conclude this paper in Section~\ref{sec:conclu}.

\section{Preliminaries} \label{sec:prelim}
We first review tools from nonsmooth analysis and state the assumptions used throughout the paper. Then, we introduce the inexact Muon and its operator-norm geometry, followed by the first-order oracle model used in our subsequent analysis.

\subsection{Nonsmooth analysis and conservative fields}
We consider the matrix optimization problem given by 
\begin{equation*}
\min_{X \in \br^{m \times n}} f(X). 
\end{equation*}
Throughout, we assume that $f$ is \emph{locally Lipschitz}, meaning that it is Lipschitz on a neighborhood of every point in $\br^{m\times n}$. Rademacher's theorem guarantees that such a function is differentiable almost everywhere and the Clarke subdifferential of $f$ \citep{Clarke-1990-Optimization} at $W \in \br^{m \times n}$ is defined by
\begin{equation*}
\partial f(W) := \conv\left\{V\in\br^{m\times n}:\exists W_k\to W\text{ such that $f$ is differentiable at every $W_k$ and }\grad f(W_k)\to V\right\},
\end{equation*}
where $\conv(A)$ denotes the convex hull of $A$ for any $A \subseteq \mathbb R^{m\times n}$. However, backpropagation might not output an element of the Clarke subdifferential during neural network training \cite[Remark 13]{Bolte-2021-Conservative}. To capture what backpropagation actually returns, a more general model of calculus, called \emph{conservative field}, was introduced in \citep{Bolte-2021-Conservative}. In particular, a conservative field of $f$ maps each $W\in\br^{m\times n}$ to a set of potential differentials of $f$ at $W$, and generalizes $\partial f$ to accommodate the operations and compositions that are standard in neural networks. It becomes almost everywhere the singleton consisting of the gradient of $f$ \cite[Theorem 1]{Bolte-2021-Conservative}.

We abbreviate any set-valued mapping $D:A \to 2^B$ by $D:A \rightrightarrows B$, where $A,B$ are sets and $2^B$ is the power set of $B$. The graph of $D$ is defined as $\{(x,z): x\in A, z\in D(x)\}$. Let $I \subseteq \br$ be a closed and possibly unbounded interval. A function $x:I \to \br^{m \times n}$ is \emph{absolutely continuous} if it is differentiable almost everywhere and satisfies $x(t_2)-x(t_1)=\int_{t_1}^{t_2} \dot x(s)ds$ for all $t_1,t_2\in I$. Formally, we have
\begin{definition}
Let $f \colon \br^{m \times n} \to \br$ be locally Lipschitz, and let $\DCal_f \colon \br^{m\times n} \rightrightarrows \br^{m\times n}$ be a set-valued mapping with closed graph and nonempty and compact values. We say that $\DCal_f$ is a \emph{conservative field} of $f$ if, for every absolutely continuous $\gamma \colon [0,1] \to \br^{m\times n}$ and every measurable $v \colon [0,1] \to \br^{m\times n}$ with $v(t) \in \DCal_f(\gamma(t))$ for almost every $t \in [0,1]$, the following statement holds, 
\begin{equation*}
f(\gamma(1)) - f(\gamma(0)) = \int_0^1 \big\langle \dot{\gamma}(t), v(t) \big\rangle  dt .
\end{equation*}
\end{definition}
For an objective $f$ represented by a finite computational graph whose elementary functions admit conservative fields, the possible outputs of backpropagation form a conservative field for $f$~\citep[Theorem~8]{Bolte-2021-Conservative}. We impose the following assumption on $f$ and its conservative field $\mathcal D_f$, which has become recognized as standard in the literature~\citep{Bolte-2021-Conservative, Davis-2020-Stochastic, Ding-2025-Adam}. 
\begin{assumption}\label{assumption:muon-data}
The objective function $f$ admits a convex-valued conservative field $\DCal_f:\br^{m \times n} \rightrightarrows \br^{m\times n}$ for which the weak Sard condition holds. Equivalently, we have (i) $f$ is locally Lipschitz, (ii) $\DCal_f$ is a convex-valued conservative field for $f$, and (iii) $\{f(W):0 \in \DCal_f(W)\}$ has empty interior in $\br$.
\end{assumption}
The objective functions obtained from standard architectures, including convolutional networks~\citep[Chapter~9]{Goodfellow-2016-Deep} and transformers~\citep{Vaswani-2017-Attention}, satisfy Assumption~\ref{assumption:muon-data}. Indeed, the automatic differentiation construction of~\citet[Section~5.2 and Corollary~6]{Bolte-2021-Conservative} yields the conservative field containing the outputs of backpropagation. Taking its convex hull can preserve conservativity and does not exclude any such output~\citep[Remark~3(e)]{Bolte-2021-Conservative}, while the nonsmooth Morse-Sard theorem gives the weak Sard condition~\citep[Theorem~5]{Bolte-2021-Conservative}. To obtain finite-time guarantees for our algorithm, we impose the additional structure. 
\begin{assumption}
\label{assumption:convex}
The objective function $f$ is convex, and $\mathcal D_f = \partial f$.
\end{assumption}
For convex $f$, the Clarke subdifferential coincides with the convex subdifferential, so $\partial f$ can be understood in either sense~\citep[Proposition~2.2.7]{Clarke-1990-Optimization} without abuse of notation. 

\subsection{Muon and operator-norm geometry}
We study Muon in the operator-norm geometry. For $A \in \br^{m \times n}$, let $\|A\|_{\rm op} := \max_{\|x\|_2\leq1}\|Ax\|_2$, where $\|\cdot\|_2$ denotes the Euclidean norm, and let $\|A\|_{\rm nuc}$ denote the nuclear norm. These norms are dual under the Frobenius inner product. 

Newton-Schulz~\citep{Jordan-2024-Muon} and PolarExpress~\citep{Amsel-2026-Polar} are standard subroutines for Muon to approximate orthogonalization. At each iteration of inexact Muon, these methods start from a normalized momentum matrix and apply a finite number of polynomial iterations. Given that $M$ is a momentum matrix and $\sigma \geq 0$ is a parameter in normalization, the resulting approximation has the following form of 
\begin{equation*}
\widetilde{M} \textnormal{poly}(\widetilde{M}^\top \widetilde{M}),\quad \textnormal{where } \widetilde{M} = \begin{cases}
\tfrac{M}{\|M\|_F+\sigma}, & M \not= 0,\\ 0, & M=0, 
\end{cases}
\end{equation*}
where $\|\cdot\|_F$ denotes the Frobenius norm and $\textnormal{poly}(\cdot)$ is the scalar polynomial determined by the chosen method and number of polynomial iterations. These subroutines preserve the nonzero singular subspaces and modify only the singular values. Motivated by these properties, we use the following abstract model. For $\delta \in [0,1)$, $Z \in \br^{m \times n}$ with $\|Z\|_F \leq 1$ and $r:=\operatorname{rank}(Z)$, we define $\operatorname{polar}_{\delta}(0):=\{0\}$ and, for $Z\ne0$,
\begin{equation} \label{eq:inexact-polar-envelope}
\operatorname{polar}_\delta(Z) :=
\left\{
U\operatorname{Diag}(\phi_1,\ldots,\phi_{r})V^\top
\ \middle| \
\begin{array}{l}
Z = U\operatorname{Diag}(\sigma_1,\dots,\sigma_{r}) V^\top
\text{ is a compact SVD},\\
0 \leq \phi_j\leq 1+\delta \quad \textnormal{for every } j, \\
\phi_j \geq 1-\delta \quad \textnormal{whenever } \sigma_j \geq \delta
\end{array}
\right\}.
\end{equation}
Given a sequence $\{\delta_t\}_{t \geq 0} \subseteq [0,1)$ and $\{\sigma_t\}_{t \geq 0} \subseteq [0, +\infty)$, the inexact
Muon recursion is
\begin{equation}
\label{eq:muon}
G_t \in \DCal_f(W_t),\quad M_{t+1}=\beta_tM_t+(1-\beta_t)G_t,\quad \widetilde M_{t+1}=\tfrac{M_{t+1}}{\|M_{t+1}\|_F+\sigma_t}, \quad W_{t+1} \in W_t - \eta_t \operatorname{polar}_{\delta_t} (\widetilde M_{t+1})
\end{equation}
with the convention of $\widetilde M_{t+1}=0$ when $M_{t+1}=0$ and $\sigma_t=0$. We note that $\operatorname{polar}_0(Z)$ is a singleton set for any $Z \in \br^{m \times n}$ such that $\|Z\|_F\leq 1$~\cite[Theorem 8.1]{Higham-2008-Functions}. Therefore, if $\delta_t \equiv 0$ and $M_{t+1} = U_{t+1}\Sigma_{t+1} V_{t+1}^\top$ is a compact SVD, the inexact Muon scheme~\eqref{eq:muon} reduces to the exact Muon recursion
\begin{align*}
W_{t+1} \in W_t-\eta_t\operatorname{polar}_{0}\left(\tfrac{M_{t+1}}{\|M_{t+1}\|_F+\sigma_t}\right),\quad \textnormal{equivalently},\quad W_{t+1}= W_t - \eta_t U_{t+1}V_{t+1}^\top.
\end{align*}
The exact Muon recursion thus minimizes a linear model over an operator-norm ball centered at $W_t$ \citep{Pethick-2025-Training}, given by, $W_{t+1}\in\argmin_{\|W-W_t\|_{\rm op}\leq \eta_t} \langle W-W_t, M_{t+1}\rangle$.

\subsection{Oracle complexity of convex nonsmooth optimization} \label{subsec:oracle-model}
We use the black-box first-order oracle model~\citep{Nemirovski-1983-Problem, Braun-2017-Lower} to measure the number of oracle calls needed to optimize in the operator-norm geometry, where the unknown objective $f$ belongs to a known class $\FCal$ of convex Lipschitz functions on $\br^{m \times n}$, and accuracy is measured relative to its minimum over a nonempty compact convex set $\XCal$. At each iteration, an algorithm queries a point using the preceding oracle replies, and the oracle returns the function value and a subgradient at that point. Given $\epsilon>0$, the algorithm seeks to output $\widehat{W}$ satisfying $f(\widehat{W}) \leq \min_{W \in \XCal} f(W)+\epsilon$. This paradigm includes projected subgradient descent and mirror descent~\citep{Beck-2017-First}, conditional gradient~\citep{Jaggi-2013-Revisiting} and Muon~\citep{Jordan-2024-Muon}. 

More specifically, given $\FCal$ and $\XCal$, an oracle $\mathsf{O}$ supplies partial information $\mathsf{O}_f(W)$ about an unknown instance $f\in\mathcal F$ at a queried point $W\in\br^{m\times n}$. The algorithm may query and report anywhere in $\br^{m\times n}$. We consider a \emph{first-order oracle} $\mathsf O$, which answers a query $W \in \br^{m \times n}$ by
\begin{equation*}
\mathsf{O}: \FCal \times \br^{m \times n} \to \br \times \br^{m \times n},\quad \mathsf{O}_f(W)=(f(W),g),\quad g\in\partial f(W),
\end{equation*}
where $f(W)$ is the function value and $g$ is a subgradient of $f$ at $W$. We also require $\mathsf{O}$ to be \emph{local}: if $f_1,f_2 \in \FCal$ agree on a neighborhood of $W$, we have $\mathsf{O}_{f_1}(W)=\mathsf{O}_{f_2}(W)$. Thus, the oracle reply at $W$ depends only on the behavior of $f$ near $W$.

Let $\ACal(\mathsf{O})$ denote the set of deterministic algorithms based on $\mathsf{O}$. Each algorithm proceeds by maintaining $\Pi_q:=\left((X^1, \mathsf{O}_f(X^1)), \ldots, (X^q,\mathsf{O}_f(X^q))\right)$ for $q \geq 0$ (with $ \Pi_0:=\emptyset$) and applying a decision rule that either outputs a point $\widehat{X} \in \br^{m \times n}$ or continues to select the next query $X^{q+1} \in \br^{m \times n}$. Given $\mathsf{A} \in \ACal(\mathsf{O})$, $f \in \FCal$ and a tolerance $\epsilon>0$, we write $\mathsf {T}_{\mathsf{A}}(f, \epsilon)$ for the number of queries that $\mathsf{A}$ makes to reach an $\epsilon$-optimal solution. Equivalently, $\mathsf{T}_{\mathsf{A}}(f,\epsilon)=q$ if $\mathsf{A}$ stops after $q$ completed calls with an output satisfying $f(\widehat{X})-\min_{X \in \XCal} f(X) \leq \epsilon$, and $\mathsf{T}_{\mathsf{A}}(f,\epsilon)=+\infty$ otherwise. The \emph{worst-case} oracle complexity is defined by
\begin{equation} \label{eq:oracle-worst-case-complexity}
\mathfrak{m}_{\rm wcc}(\XCal,\FCal,\mathsf{O},\epsilon):=\inf_{\mathsf{A} \in \ACal(\mathsf{O})} \sup_{f \in \FCal} \mathsf{T}_{\mathsf{A}}(f,\epsilon).
\end{equation}
Let $\Delta(\FCal)$ denote the set of probability distributions on $\mathcal F$, we define the \emph{distributional} complexity as
\begin{equation} \label{eq:oracle-distributional-complexity}
\mathfrak{m}_{\rm dc}(\XCal,\FCal,\mathsf{O},\epsilon):=\sup_{\PCal \in \Delta(\mathcal F)}\ \inf_{\mathsf{A} \in \ACal(\mathsf{O})} \EE_{f \sim \PCal}[\mathsf{T}_{\mathsf{A}}(f,\epsilon)].
\end{equation}
Bounding this quantity yields stronger lower bounds. Indeed, Yao's minimax principle~\citep{Yao-1977-Probabilistic} yields 
\begin{equation} \label{eq:oracle-complexity-order}
\mathfrak{m}_{\rm dc}(\XCal,\FCal,\mathsf{O},\epsilon) \leq \mathfrak{m}_{\rm wcc}(\XCal,\FCal,\mathsf{O},\epsilon).
\end{equation}
which implies that a lower bound for $\mathfrak m_{\rm dc}$ is also the lower bound for $\mathfrak{m}_{\rm wcc}$.

\section{Main Results}\label{sec:results}
We first show that Muon fails to converge to a global optimal solution for some convex and Lipschitz functions when the momentum factor is constant. Then, we show that inexact Muon with a particular momentum schedule and vanishing polar error converges asymptotically in general nonconvex settings. We also propose a new algorithm based on Muon and proves that it achieves the asymptotic convergence in nonconvex settings and the optimal convergence rate in convex settings.

\subsection{Convergence behavior of Muon}
\citet[Counterexample~3.3]{Parshakova-2026-Muon} provides a convex and Lipschitz function for which Muon with $\beta_t \equiv \beta \in [0,1)$ and a nonincreasing step size sequence fails to converge from some starting points, and \citet[Counterexample~3.4]{Parshakova-2026-Muon} strengthens this conclusion to almost every starting point and adaptive step sizes based on the full gradient history, but only for $\beta_t \equiv \beta \in [0,\frac{1}{2})$. Our first result closes the gap by proving this almost everywhere failure for every $\beta \in [0,1)$. 
\begin{theorem}\label{thm:fixed-momentum-failure}
For any fixed $m,n \geq 2$ and $\beta \in [0,1)$, there exists a convex and Lipschitz function $f: \br^{m\times n} \to \br$ such that the iterates generated by Muon in Eq.~\eqref{eq:muon} with $\delta_t\equiv0$, $\DCal_f = \partial f$, $\beta_t \equiv \beta$, $M_0 = G_0$, and arbitrary adaptive step sizes $\{\eta_t\}_{t\ge 0}$ based on the full subgradient history satisfy
\begin{equation*}
\inf_{t \geq 0} f(W_t) > f^\star := \inf_{W \in \br^{m\times n}} f(W).
\end{equation*}
for almost every $W_0 \in \br^{m \times n}$,
\end{theorem}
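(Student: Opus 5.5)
The plan is to reduce to the diagonal case, where exact Muon becomes signed momentum, and then build a convex Lipschitz objective whose subgradient momentum always has a fixed sign pattern. That pattern forces the update to leave a critical linear functional of $W$ unchanged. Concretely, I would take $f(W)=\phi(W_{11},W_{22})+\sum_{(i,j)\notin\{(1,1),(2,2)\}}|W_{ij}|$ with a two-dimensional convex piecewise-linear $\phi$. I would first try $\phi(x,y)=|x-y|+\varepsilon|x+y-c|$ with $c\neq 0$ and $\varepsilon=\varepsilon(\beta)>0$ small, so $f^\star=0$ is attained only on $\{x=y=c/2\}$.

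\emph{Step 1 (diagonal reduction and genericity).} The off-diagonal terms contribute subgradients $\pm E_{ij}$, whose momenta never vanish away from kinks. So $M_{t+1}$ is "sign-diagonal" only in the coordinates $(1,1),(2,2)$, plus a fixed pattern elsewhere. I would check that $\operatorname{polar}_0$ then acts entrywise as $\operatorname{sign}$ on the $2\times2$ diagonal block; if not, I would simply take $f$ supported on the diagonal entries and absorb the rest into the same sign analysis. Next comes the almost-everywhere argument. The step sizes are functions of the subgradient history only, and off the kink set $\{x=y\}\cup\{x+y=c\}\cup\{W_{ij}=0\}$ every subgradient lies in a finite set. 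Hence for each $t$ there are only finitely many possible histories and step sequences. For each of them, $W_t$ is $W_0$ plus a fixed shift, so the set of $W_0$ for which some $W_t$ lands on a kink is a countable union of hyperplanes, which is a null set. Off this set, each $G_t$ is uniquely determined, with $\phi$-part $\varepsilon s_t(1,1)+r_t(1,-1)$ and $s_t,r_t\in\{\pm1\}$.

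\emph{Step 2 (invariance).} Write the $(1,1),(2,2)$ entries of $M_{t+1}$ as $(\varepsilon S_t+R_t,\ \varepsilon S_t-R_t)$. Here $S_t,R_t$ are the weighted averages of $s_k,r_k$ with weights $\beta^{t+1}$ on $k=0$ (since $M_0=G_0$) and $(1-\beta)\beta^{t-k}$ otherwise; these weights sum to $1$. Whenever $|R_t|>\varepsilon|S_t|$, the two entries have opposite signs. The update is then $\pm\eta_t(1,-1)$ on $(x,y)$, which keeps $x+y=x_0+y_0$ fixed. For generic $W_0$ we have $x_0+y_0\neq c$, and then $f(W_t)\ge\varepsilon|x_0+y_0-c|>0=f^\star$ for all $t$.

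\emph{Main obstacle.} The hard step is guaranteeing $|R_t|>\varepsilon|S_t|$ for all $t$ when $\beta\ge\tfrac12$. In that regime a weighted sum of $\pm1$'s with geometric weights can be arbitrarily close to $0$, and the adversarial steps can make the iterates cross $\{x=y\}$ as often as they like. This is exactly where the construction of \citet{Parshakova-2026-Muon} stops working. My first attempt would be to break the symmetry of the $|x-y|$ term: replace it by an asymmetric piecewise-linear function $a(x-y)_+ + b(y-x)_+$, possibly with several kinks at different scales. The goal is to make the signed average $R_t$ of its slopes bounded away from $0$ uniformly in the crossing pattern. For example, with $b\gg a$ and the update keeping the iterate oscillating around the kink, one side's contribution dominates. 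I would then choose $\varepsilon<\inf_t|R_t|$, a quantity depending only on $\beta,a,b$.

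If this fails, I would not insist on the invariant. Instead I would track a monotone quantity: show that whenever the diagonal momentum entries share a sign, the resulting move of $x+y$ is away from $c$, or is bounded in total by a geometric series. That would still give $\inf_t f(W_t)>f^\star$, and it is also where the remark that the failure can occur along bounded iterates would naturally enter.
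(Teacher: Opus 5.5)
\textbf{Verdict: genuine gap.} The proposal leaves the central step of the theorem unproved, and the heuristic it offers for that step is false.

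\textbf{What is correct.} Your setup matches the paper's skeleton:
\begin{itemize}
\item an objective that depends only on the two diagonal entries (the off-diagonal $|W_{ij}|$ terms must be dropped, because the polar factor of a dense momentum matrix is not entrywise);
\item exact Muon acting as signed momentum on the diagonal;
\item invariance of $x+y$ while the $(x-y)$-momentum dominates;
\item the countable-union-of-hyperplanes argument for almost every $W_0$.
\end{itemize}
Your proposed repair, asymmetric slopes across $\{x=y\}$, is also the paper's choice $A_2(q)_++(-q)_+$ with $q=x-y$. There is one minor slip: the weight of $G_0$ in $M_{t+1}$ is $\beta^t$, not $\beta^{t+1}$.

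\textbf{The gap.} The step you call the main obstacle is a uniform lower bound on the $(x-y)$-momentum over all adversarial crossing patterns. That bound is the entire new content of the theorem, and you do not prove it.

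\textbf{Why the heuristic fails.} Up to the reflection $x\leftrightarrow y$, normalize the shallow slope to $1$ and call the steep slope $A$. Let $\zeta_t$ be the $(x-y)$-momentum component; it averages $A$ over $\{q>0\}$ and $-1$ over $\{q<0\}$. Take any $\beta\in(0,1)$.
\begin{itemize}
\item Start from $q_0<0$, so $\zeta_1=-1$ and $q$ increases.
\item A large $\eta_0$ gives $q_1>0$. Then $\zeta_2=(1-\beta)A-\beta>0$.
\item A large $\eta_1$ gives $q_2<0$. From then on, $\zeta_{2+j}=\beta^j\zeta_2-(1-\beta^j)$, and $q$ keeps decreasing while $\zeta>0$.
\item If $A=(\beta^{-k}-1+\beta)/(1-\beta)$, then $\zeta_2=\beta^{-k}-1$ and $\zeta_{2+k}=0$ exactly.
\end{itemize}
At that step the two diagonal momentum entries share a sign, so $x+y$ moves, whatever $\varepsilon>0$ is. This happens on a positive-measure set of $W_0$ under a fixed history-based step rule (for instance $\eta_0=10$, $\eta_1=100$ with $q_0\in(-1,0)$). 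These bad slope ratios are unbounded in $k$. So $b\gg a$ does not make $\inf_t|R_t|$ positive, and ``choose $\varepsilon<\inf_t|R_t|$'' is unavailable.

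\textbf{What is missing: a genericity choice plus a run-by-run estimate.} The paper does the following.
\begin{itemize}
\item It picks $\mu\in(1,1/\beta)$ outside the countable set $\{\beta^i/(1-\beta^j):i\ge0,j\ge1\}$. Compactness then gives $\delta:=\operatorname{dist}(\{\beta^i\}\cup\{0\},\{\mu(1-\beta^j)\}\cup\{\mu\})>0$.
\item It sets $A_2=\mu\beta^{-K}-1$ with $K$ large, so that $\zeta\ge(1-\beta)A_2-\beta>1$ on steep-side runs.
\item It observes that the iterate can enter the steep side only when the momentum lies in $[-1,0]$. Hence every shallow-side run starts from $\zeta_{t'}\in\{-1,A_2\}\cup\bigcup_{r\ge1}[(1-\beta^r)A_2-\beta^r,(1-\beta^r)A_2]$.
\item It bounds $|\beta^k\zeta_{t'}-(1-\beta^k)|\ge 2A_1$ separately by run length $k$. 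For $k\le K$ it uses $\delta$ together with $\beta^{K+1}\le\delta/2$, which makes the window widths negligible. For $k>K$ it uses $1-\mu\beta$.
\end{itemize}

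\textbf{Your fallback does not help.} When the diagonal entries share a sign, the step moves $x+y$ toward $c$, not away from it. This is because the $(x+y)$-momentum has the sign of $x_0+y_0-c$ until $x+y$ crosses $c$. With adaptive step sizes, the total such movement also cannot be bounded by a geometric series.
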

\begin{proof}
For any matrix $X$, we denote $[X]_{ij}$ as its $(i,j)$-th entry. We define 
\begin{equation*}
f(W) = A_1|[W]_{11}+[W]_{22}|+A_2\max\{[W]_{11}-[W]_{22},0\}+\max\{[W]_{22}-[W]_{11},0\}
\end{equation*}
where $A_1,A_2$ are positive constants and depend only on $\beta$. For simplicity, we denote the set-valued sign function by $\operatorname{sgn}:\br \rightrightarrows \br$ as 
\begin{equation*}
\operatorname{sgn}(x) := \begin{cases}
\{-1\}, & \textnormal{if } x<0,\\
[-1,1], & \textnormal{if } x=0,\\
\{1\}, & \textnormal{otherwise}.
\end{cases}
\end{equation*}
\textbf{Step 1:} We can rewrite Muon's updates in the coordinate system of $[W]_{11}+[W]_{22}$ and $[W]_{11}-[W]_{22}$ using the form of $f$. Indeed, we define $p_t=[W_t]_{11}+[W_t]_{22}$, $q_t=[W_t]_{11}-[W_t]_{22}$, $\alpha_t=\frac{1}{2}([M_t]_{11}+[M_t]_{22})$ and $\zeta_t=\frac{1}{2}([M_t]_{11}-[M_t]_{22})$. Since $f$ depends only on $[W]_{11}$ and $[W]_{22}$, both subgradient and momentum are supported only on the diagonal entries as follows, 
\begin{equation*}
[M_{t+1}]_{11} = (\alpha_{t+1}+\zeta_{t+1}),\quad [M_{t+1}]_{22}=(\alpha_{t+1}-\zeta_{t+1}), \quad [M_{t+1}]_{ij}=0 \textnormal{ if } i \neq j. 
\end{equation*}
Since $M_{t+1}=U_{t+1}\Sigma_{t+1}V_{t+1}^\top$ is a compact SVD, $P_{t+1}=U_{t+1}V_{t+1}^\top$ depends on the same two entries and all other entries are $0$. We write $u_{t+1}:=[P_{t+1}]_{11}$ and $v_{t+1}:=[P_{t+1}]_{22}$. By definition, we have
\begin{equation*}
\begin{array}{ll}
u_{t+1} \in \operatorname{sgn}([M_{t+1}]_{11})=\operatorname{sgn}(\alpha_{t+1}+\zeta_{t+1}), & p_{t+1}=p_t-\eta_t(u_{t+1}+v_{t+1}), \\
v_{t+1} \in \operatorname{sgn}([M_{t+1}]_{22}) = \operatorname{sgn}(\alpha_{t+1}-\zeta_{t+1}), & q_{t+1}=q_t-\eta_t(u_{t+1}-v_{t+1}).
\end{array}
\end{equation*}
In addition, we have $\alpha_{t+1}=\beta\alpha_t+(1-\beta)a_t$ and $\zeta_{t+1}=\beta\zeta_t+(1-\beta)b_t$ where 
\begin{equation*}
a_t\in A_1\operatorname{sgn}(p_t), \quad b_t \in \begin{cases} \{A_2\}, & q_t > 0, \\
\{-1\}, & q_t < 0, \\ 
[-1, A_2], & q_t = 0. \end{cases}
\end{equation*}
If $|\zeta_{t+1}|>|\alpha_{t+1}|$, we have $u_{t+1}$ and $v_{t+1}$ have opposite signs and $\frac{1}{2}(u_{t+1}-v_{t+1}) \in \operatorname{sgn}(\zeta_{t+1})$. Thus, we have
\begin{equation} \label{eq:Muon-nonconvergence-simplified-update}
p_{t+1}=p_t,\quad q_{t+1}=q_t-2\eta_td_t, \quad \textnormal{where} \quad d_t\in \operatorname{sgn}(\zeta_{t+1}).
\end{equation}
\textbf{Step 2:} We define $T_0:=\inf\{t \geq 0: q_t=0\}$ and show that there are $A_1,A_2>0$ such that $|\zeta_{t+1}|>|\alpha_{t+1}|$ for all $t<T_0$. Note that we will show in \textbf{Step 3} that $T_0=+\infty$ for almost every $W_0 \in \br^{m \times n}$. 

We consider the case of $\beta=0$. Indeed, we set $A_1=\frac{1}{4}$ and $A_2=1$. For every $t<T_0$, we have $q_t\neq 0$. This implies $\zeta_{t+1}=b_t\in\{-1,1\}$ and $|\alpha_{t+1}|=|a_t|\le A_1$. Hence, $|\zeta_{t+1}| = 1 > |\alpha_{t+1}|$. 

We proceed to the general case of $\beta \in (0,1)$. Here, the construction of $A_1$ and $A_2$ is \textit{implicit}. Indeed, we define $g: (1,\frac{1}{\beta}) \to \br$ by $g(x):= \inf_{i \geq 0,j \geq 1} |\beta^i-x (1-\beta^j)|$ and show that there exists $\mu \in (1,\frac{1}{\beta})$ such that $g(\mu)>0$. Indeed, for any $x \in (1,\frac{1}{\beta})$, we define
\begin{equation*}
\ACal = \{\beta^i: i \geq 0\} \cup \{0\}, \quad \BCal_x:=\{x(1-\beta^j):j \geq 1\} \cup \{x\},\quad \BCal = \left\{\tfrac{\beta^i}{1-\beta^j}: i \geq 0, j \geq 1\right\}. 
\end{equation*}
Both $\ACal$ and $\BCal_x$ are compact. Since $x>1$, we have $0 \notin \BCal_x$ and $x \notin \ACal$. Thus, if $x \notin \BCal$, then $\ACal \cap \BCal_x = \varnothing$ and $g(x)=\operatorname{dist}(\ACal,\BCal_x)>0$. Since $\BCal$ is countable, we can choose $\mu \in (1,\frac{1}{\beta}) \setminus \BCal$, which gives $g(\mu)>0$. Then, we set $K = \max\{\lceil\log_{1/\beta}(2/\delta)\rceil-1, \lceil\log_{1/\beta}(2/(1-\beta))\rceil\}$ where $\delta=g(\mu)$, and define 
\begin{equation*}
A_1=\tfrac{1}{2}\min\left\{\tfrac{1}{2},\tfrac{\delta}{2},1-\beta\mu\right\}>0, \quad A_2=\tfrac{\mu}{\beta^K}-1>0. 
\end{equation*}
Then, we show $|\zeta_{t+1}|>|\alpha_{t+1}|$ for all $t<T_0$. By using the definition of $u_{t+1}$ and $v_{t+1}$ and $\operatorname{sgn}=\partial|\cdot|$, we have $0 \leq (u_{t+1}-v_{t+1})((\alpha_{t+1}+\zeta_{t+1})-(\alpha_{t+1}-\zeta_{t+1})) = 2\zeta_{t+1}(u_{t+1}-v_{t+1})$. In each of the cases $\zeta_{t+1}>0$, $\zeta_{t+1}=0$, and $\zeta_{t+1}<0$, the $q$-update can be written as
\begin{equation}\label{eq:general-q-update}
q_{t+1}=q_t-\lambda_t s_{t+1} \textnormal{ for } \lambda_t \in [0, 2\eta_t] \textnormal{ and } s_{t+1} \in \operatorname{sgn}(\zeta_{t+1}).
\end{equation}
Since $M_0=G_0$, we have $|\alpha_0|=|a_0| \leq A_1$ and $\zeta_0=b_0\in[-1,A_2]$. By induction, we have $|\alpha_t| \leq A_1$ and $\zeta_t\in[-1,A_2]$ for all $t \geq 0$. Thus, it remains to show that $|\zeta_{t+1}|\ge2A_1$ for all $t<T_0$.

Before $T_0$, we call a maximal consecutive block of indices on which $q_t<0$ a negative run, and define a positive run analogously. Fix $t<T_0$. If $q_t>0$, then $t$ is in a positive run starting at some $t' \leq t$ and
\begin{equation*}
\zeta_{t+1}=(1-\beta^{t+1-t'})A_2+\beta^{t+1-t'}\zeta_{t'} \geq (1-\beta^{t+1-t'})A_2-\beta^{t+1-t'} \geq (1-\beta)A_2-\beta. 
\end{equation*}
Since $K \geq \lceil\log_{1/\beta}(2/(1-\beta))\rceil$, we have $\beta^K \leq \frac{1-\beta}{2}$. By definition of $A_2$, we have $(1-\beta)A_2-\beta>1\geq 2A_1$. Thus, we have $\zeta_{t+1} \geq 2A_1$. Then, it remains to show $q_t<0$ implies $|\zeta_{t+1}|\geq 2A_1$. 

We let $t' \leq t$ start the negative run containing $t$ and define
\begin{equation*}
\ECal:=\{-1,A_2\} \cup \bigcup_{r \geq 1}I_r, \textnormal{ where } I_r:=[(1-\beta^r)A_2-\beta^r,(1-\beta^r)A_2] \textnormal{ for all } r \geq 1.
\end{equation*}
We claim that $\zeta_{t'} \in \ECal$. Indeed, we first consider $t'=0$ and obtain that $q_0<0$ and $\zeta_{t'}=b_0=-1$. Furthermore, we consider $t'>0$ and and the preceding positive run starts at $t=0$. This implies $\zeta_{i}=A_2$ for all $i\leq t'$. Thus, we have $\zeta_{t'}=A_2$. Finally, we consider $t'>0$ and the preceding positive run starts at $t=\ell>0$. This implies $q_{\ell-1}<0<q_\ell$, which together with Eq.~\eqref{eq:general-q-update} yields $\lambda_{\ell-1}>0$ and $s_\ell<0$. Since $s_\ell \in \operatorname{sgn}(\zeta_\ell)$, we have $\zeta_\ell \leq 0$. Thus, we have $q_\ell, \ldots, q_{t'-1}>0$ and $q_{t'}<0$. This implies $\zeta_{t'}=(1-\beta^r)A_2+\beta^r\zeta_\ell \in I_r$ where $r=t'-\ell \geq 1$. Therefore, we have  
\begin{equation*}
|\zeta_{t+1}|=|\beta^k\zeta_{t'}-(1-\beta^k)| \geq \beta^k\min \left\{\inf_{r \geq 1} \operatorname{dist}(I_r, \tfrac{1}{\beta^k}-1), |-1-(\tfrac{1}{\beta^k}-1)|, |A_2-(\tfrac{1}{\beta^k}-1)|\right\}.
\end{equation*}
where $k=t+1-t'$ is the length of the run up to $t$. 

It remains to prove $D_0^{(k)}=|1-(1+A_2)\beta^k| \geq 2A_1$ and $D_r^{(k)}=\beta^k\operatorname{dist}(I_r, \frac{1}{\beta^k}-1)\geq 2A_1$ for all $r\geq 1$. Indeed, we consider the case of $k \leq K$. Then, we have
\begin{equation*}
\inf_{r\geq 1} D_r^{(k)} = \inf_{r\geq 1} \inf_{s\in[0,\beta^r]} |1-(1-\beta^r)\beta^k (A_2+1)-s\beta^k| \geq \inf_{r\geq 1} \ |1-(1-\beta^r)\beta^k(A_2+1)|-\beta^{k+r}.
\end{equation*}
For all $r \geq 1$, we have $\beta^{k+r} \leq \beta^{k+1} \leq \frac{\delta}{2}\beta^{k-K}$. By the definition of $K$, we have
\begin{equation*}
|1-(1-\beta^r)\beta^k(A_2+1)| = |1-(1-\beta^r)\mu\beta^{k-K}| \geq \beta^{k-K}\delta.
\end{equation*}
Thus, we have $\inf_{r\geq 1} D_r^{(k)} \geq \frac{\delta}{2} \geq 2A_1$ and 
\begin{equation*}
D_0^{(k)}=|1-\mu\beta^{k-K}|\geq (\mu-1)\beta^{k-K} \geq \mu-1 \geq \delta \geq 2A_1. 
\end{equation*}
Then, we consider the case of $k>K$. Since $\frac{1}{\beta^k}-1 \geq \frac{1}{\beta^{K+1}}-1>\frac{\mu}{\beta^K}-1=A_2$, we have
\begin{equation*}
\inf_{r\geq 1} D_r^{(k)}\geq D_0^{(k)} = |1-(1+A_2)\beta^k| = 1-\mu\beta^{k-K} \geq 1-\mu\beta \geq 2A_1.
\end{equation*}
Since $|\alpha_{t+1}|\le A_1<2A_1\le|\zeta_{t+1}|$, Eq.~\eqref{eq:Muon-nonconvergence-simplified-update} holds for all $t<T_0$. 

\noindent \textbf{Step 3:} We show that $T_0=+\infty$ for almost every $W_0$. Indeed, we let $\GCal_\beta$ be the set of subgradients of $f$ at points $\{W:|[W]_{11}|\not=|[W]_{22}|\}$ and $\eta_t=\phi_t(G_0,\ldots,G_t)$ where $\phi_t(\cdot)$ is the adaptive step-size rule. Then, we define
\begin{equation*}
\HCal_\phi:=\left\{2\sum_{t=0}^{T-1}\phi_t(H_0,\ldots,H_t)\varepsilon_t: T \geq 0,H_t \in \GCal_\beta, \varepsilon_t \in \{\pm 1\} \right\}.
\end{equation*}
Thanks to the particular form of $f$, the set $\GCal_\beta$ is finite even though $\{W:|[W]_{11}|\not=|[W]_{22}|\}$ is infinite. Since there are finitely many choices for each $T$, the set $\HCal_\phi$ is countable. Note that $\{W_0: q_0 \in \HCal_\phi\}$ is a countable union of affine hyperplanes. Thus, its union with $\{p_0q_0=0\}$ is a zero-measure set. If $q_0 \notin \HCal_\phi$, $p_0q_0 \neq 0$, and $T_0<+\infty$, Eq.~\eqref{eq:Muon-nonconvergence-simplified-update} before $T_0$ gives $p_t=p_0\neq 0$. Thus, all subgradients before $T_0$ belong to $\GCal_\beta$ and $q_0 \in \HCal_\phi$. This yields a contradiction. Thus, $T_0=+\infty$ and $p_t=p_0$ for every $t$ for almost every $W_0$, which implies $f(W_t) \geq A_1|p_0|>0 = \inf_{W \in \br^{m \times n}} f(W)$. 
This completes the proof. 
\end{proof}
We remark that the failure of Muon in Theorem~\ref{thm:fixed-momentum-failure} is not due to unbounded iterates. Indeed, the proof has shown that the iterates $\{W_t\}_{t \geq 0}$ remain bounded whenever the step sizes $\{\eta_t\}_{t \geq 0}$ are bounded. Our second result further shows that this failure stems from fixed momentum rather than orthogonalization.
\begin{theorem}
\label{thm:muon-asymptotic-convergence}
Suppose that Assumption~\ref{assumption:muon-data} holds, $\tau>0$ and $\{(W_t,M_t)\}_{t \geq 0}$ is generated by Eq.~\eqref{eq:muon} with $\beta_t=1-\tau \eta_t$, $\eta_t>0$, $\eta_t\to0$, $\sum_{t=0}^{\infty}\eta_t=+\infty$, $\delta_t\to0$, and $\sup_{t\geq 0}\sigma_t<+\infty$. If $\sup_t \|W_t\|_F<+\infty$, every cluster point of $\{W_t\}_{t \geq 0}$ is a $\mathcal D_f$-stationary point, $M_t \to 0$, and the sequence $\{f(W_t)\}_{t \geq 0}$ converges. In addition, if Assumption~\ref{assumption:convex} holds, $f(W_t) \to \inf_W f(W)$ as $t \to +\infty$. 
\end{theorem}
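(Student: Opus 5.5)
We view the recursion \eqref{eq:muon} with $\beta_t=1-\tau\eta_t$ as a perturbed Euler discretization, with step $\eta_t$, of a differential inclusion on $(W,M)$. We then apply the stochastic approximation framework of \citet{Benaim-2005-Stochastic} (in the deterministic form used by \citet{Davis-2020-Stochastic}) with the Lyapunov function $\varphi(W,M)=f(W)+\tau^{-1}\|M\|_{\rm nuc}$.

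\textbf{Step 1 (reformulation and boundedness).} Write $M_{t+1}=M_t+\eta_t\tau(G_t-M_t)$ and $W_{t+1}=W_t-\eta_t P_t$ with $P_t\in\operatorname{polar}_{\delta_t}(\widetilde M_{t+1})$. Boundedness of $W_t$ and local boundedness of $\DCal_f$ (compact values, closed graph) give $\sup_t\|G_t\|_F<\infty$. Since $M_{t+1}$ is a convex combination of $M_t$ and $G_t$ once $\tau\eta_t\le1$, the sequence $M_t$ is bounded. Moreover, $\|P_t\|_{\rm op}\le 1+\delta_t$ is bounded.

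\textbf{Step 2 (the limiting inclusion).} Define the set-valued field
\[
F(W,M)=\bigl\{(-P,\ \tau(G-M)) : G\in\DCal_f(W),\ P\in\partial\|\cdot\|_{\rm nuc}(M)\bigr\}.
\]
Here $\partial\|M\|_{\rm nuc}$ is exactly the set of $P$ with $\|P\|_{\rm op}\le1$ and $\langle P,M\rangle=\|M\|_{\rm nuc}$; at $M=0$ it is the unit operator ball. This field is convex, compact-valued and outer semicontinuous. The key technical point is to show that $P_t$ lies in an $\epsilon_t$-enlargement of $\partial\|\cdot\|_{\rm nuc}(M_{t+1})$, with $\epsilon_t\to0$ in the graph sense. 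The SVD of $\widetilde M_{t+1}$ shares singular vectors with $M_{t+1}$, and $\phi_j\in[1-\delta_t,1+\delta_t]$ on singular values $\ge\delta_t$. Singular values of $\widetilde M_{t+1}$ below $\delta_t$ correspond to singular values of $M_{t+1}$ that are at most $\delta_t(\|M_{t+1}\|_F+\sigma_t)$, which is $O(\delta_t)$ by boundedness of $M_t$ and $\sigma_t$. Hence $\langle P_t,M_{t+1}\rangle\ge\|M_{t+1}\|_{\rm nuc}-O(\delta_t)$ and $\|P_t\|_{\rm op}\le1+\delta_t$. Passing to limits along subsequences shows that any limit lies in $\partial\|\cdot\|_{\rm nuc}$ of the limit of $M$. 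I expect this step to be the main obstacle. The approximation must hold uniformly as $M_{t+1}\to0$, where $\widetilde M_{t+1}$ may have tiny singular values; for this reason the perturbation is measured in the graph of $\partial\|\cdot\|_{\rm nuc}$ rather than pointwise. The normalization by $\|M\|_F+\sigma_t$ is harmless because polar only sees singular subspaces and scaled thresholds. With this in hand, the affine interpolations of $(W_t,M_t)$ on the time scale $s_t=\sum_{i<t}\eta_i$ are asymptotic pseudo-trajectories of $\dot z\in F(z)$ (the Benaïm–Hofbauer–Sorin theorem), using $\eta_t\to0$, $\sum\eta_t=\infty$ and boundedness.

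\textbf{Step 3 (Lyapunov and conclusion).} Along any absolutely continuous solution, apply the conservative field chain rule to $f$ and the convex chain rule to $\|\cdot\|_{\rm nuc}$. The nuclear norm has conservative field $\partial\|\cdot\|_{\rm nuc}$, and at a.e.\ time one may choose the same $P=-\dot W$. This gives
\[
\tfrac{d}{ds}\varphi=\langle G,-P\rangle+\tau^{-1}\langle P,\tau(G-M)\rangle=-\langle P,M\rangle=-\|M\|_{\rm nuc}.
\]
So $\varphi$ strictly decreases off the set $\Lambda=\{(W,0):0\in\DCal_f(W)\}$. To see this, note that if $M\equiv0$ on an interval then $\dot M=\tau G=0$, so $0\in\DCal_f(W)$. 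The value set $\varphi(\Lambda)=\{f(W):0\in\DCal_f(W)\}$ has empty interior by weak Sard. Benaïm's Lyapunov theorem then shows that every limit point of $(W_t,M_t)$ lies in $\Lambda$ and that $\varphi(W_t,M_t)$ converges. Hence $M_t\to0$, and therefore $f(W_t)$ converges, and cluster points are $\DCal_f$-stationary. Under Assumption~\ref{assumption:convex}, a cluster point $W^\ast$ exists with $0\in\partial f(W^\ast)$, so $W^\ast$ is a global minimizer. Since $f(W_t)$ converges and $f$ is continuous, its limit is $f(W^\ast)=\inf f$.
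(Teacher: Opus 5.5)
Your proposal is correct and follows essentially the paper's route. It uses the same field $\{(-P,\tau(G-M)):G\in\mathcal{D}_f(W),\,P\in\partial\|\cdot\|_{\rm nuc}(M)\}$, the same Lyapunov function $f(W)+\tau^{-1}\|M\|_{\rm nuc}$ with derivative $-\|M\|_{\rm nuc}$, the same stationary set with the weak Sard argument, and the Bena\"{\i}m--Hofbauer--Sorin machinery. The paper packages that machinery as a lemma and multiplies the field by a compactly supported cutoff, since $\mathcal{D}_f$ is only locally bounded; your direct invocation of the theorem implicitly needs the same routine fix.

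The one real difference is how you verify the $\varepsilon_t$-enlargement. The paper truncates the small singular values explicitly, producing $\widehat P_{t+1}\in\partial\|\cdot\|_{\rm nuc}(\widehat M_{t+1})$ at operator-norm distance $O(\delta_t)$. You instead use $\|P_t\|_{\rm op}\le 1+\delta_t$ and $\langle P_t,M_{t+1}\rangle\ge\|M_{t+1}\|_{\rm nuc}-O(\delta_t)$, together with $M_{t+1}-M_t\to0$ and a compactness-by-contradiction argument. That reaches the same conclusion equally well.
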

We first define the $\varepsilon$-graph enlargement of the mapping $D:\br^d \rightrightarrows \br^d$ as $D^\varepsilon(x) := \{v: \exists y \textnormal{ with } \|y-x\|_{2} \leq \varepsilon \textnormal{ and } \inf_{z \in D(y)} \|v-z\|_{2} \leq \varepsilon\}$. Here, $D$ is locally bounded if for any $x\in\br^{d}$ there exists a neighborhood $V_x \subseteq \br^d$ such that $\cup_{y \in V_x} D(y)\subseteq \br^d$ is bounded. The typical examples of locally bounded mappings are conservative fields~\citep[Remark~3]{Bolte-2021-Conservative}. We then provide a general sufficient condition for the asymptotic convergence of updates that approximate a differential inclusion, which is the key to our subsequent analysis. For the sake of completeness, we provide the detailed proof. 
\begin{lemma} \label{lem:deterministic-asymptotic-convergence-principle}
Let $D:\br^d \rightrightarrows \br^d$ be locally bounded, has a closed graph, and have nonempty convex values. We consider the update $x_{t+1}=x_t+\eta_t d_t$ where $d_t\in D^{\varepsilon_t}(x_t)$, $\eta_t>0$, $\eta_t \to 0$, $\sum_{t=0}^{\infty}\eta_t=\infty$, $\varepsilon_t \geq 0$, $\varepsilon_t \to 0$ and $\sup_{t \geq 0}\|x_t\|_2<+\infty$. Let $B \subseteq \br^d$, $\varphi:\br^d \to \br$ be continuous and $L$ be the set of limit points of the sequence $\{x_t\}_{t \geq 0}$ as defined by 
\begin{equation} \label{eq:def-limit-set}
L := \bigcap_{N \geq 0} \overline{\{x_t:t \geq N\}}.
\end{equation}
Suppose that $\varphi(B\cap L)$ has an empty interior and $\varphi$ is a Lyapunov function for $D$ (every absolutely continuous solution $\gamma: [0,+\infty) \to L$ of $\dot\gamma(s)\in D(\gamma(s))$ for almost every $s \geq 0$ satisfies $\varphi(\gamma(s))\le\varphi(\gamma(0))$ for all $s \geq 0$, with $<$ for every $s>0$ whenever $\gamma(0) \notin B$). Then, we have $L \subseteq B$, $\varphi$ becomes a constant on $L$, and $\{\varphi(x_t)\}_{t \geq 0}$ converges. In addition, every cluster point of $\{x_t\}_{t \geq 0}$ lies in $B$.
\end{lemma}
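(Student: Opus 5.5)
The plan follows the standard ODE method for stochastic approximation with set-valued dynamics, in the style of Benaïm--Hofbauer--Sorin and Davis et al. The argument has three steps.

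\textbf{Step 1: interpolated process and asymptotic pseudotrajectory.} Set $s_t:=\sum_{i<t}\eta_i$ and let $X(s)$ be the piecewise-linear interpolation with $X(s_t)=x_t$. Since $\{x_t\}$ is bounded and $D$ is locally bounded with closed graph, $D$ is bounded on a compact neighborhood $K$ of $\overline{\{x_t\}}$. Because $\varepsilon_t\to0$, the same holds for $D^{\varepsilon_t}$ along the sequence. Hence $\sup_t\|d_t\|_2<\infty$ and $X$ is Lipschitz. Consider the shifted curves $X^\tau(\cdot):=X(\tau+\cdot)$ on $[0,T]$. By Arzel\`a--Ascoli they are precompact in $C([0,T])$.

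Next I show that every limit $\gamma$ of $X^{\tau_k}$ with $\tau_k\to\infty$ solves $\dot\gamma\in D(\gamma)$ almost everywhere. The derivatives $\dot X^{\tau_k}$ are bounded, so a subsequence converges weakly in $L^2$. By Mazur's lemma, convex combinations of these derivatives converge strongly and hence almost everywhere. For each $s$, the derivative $\dot X^{\tau_k}(s)=d_t$ lies in $D^{\varepsilon_t}(x_t)$, and $x_t$ is within $O(\eta_t)$ of $X^{\tau_k}(s)$. The closed graph of $D$, its convex values, and $\varepsilon_t\to0$ then give $\dot\gamma(s)\in D(\gamma(s))$; this is the usual convergence theorem for differential inclusions. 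Every such limit takes values in $L$, since $X^{\tau_k}(s)$ is close to iterates with index tending to infinity. In addition, every $p\in L$ is $\gamma(0)$ for some such limit. A diagonal argument extends these trajectories to $[0,\infty)$.

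\textbf{Step 2: $\varphi$ is constant on $L$.} This is the main obstacle. Set $\varphi_*:=\liminf\varphi(x_t)$ and $\varphi^*:=\limsup\varphi(x_t)$, and suppose $\varphi_*<\varphi^*$. By continuity of $\varphi$ and connectedness of the interpolated path, every value in $[\varphi_*,\varphi^*]$ is attained by $\varphi$ on $L$. Here I use that consecutive iterates satisfy $\|x_{t+1}-x_t\|\to0$, so $L$ is connected and $\varphi(L)$ is an interval. Since $\varphi(B\cap L)$ has empty interior, I can pick $a<b$ in $(\varphi_*,\varphi^*)$ such that $[a,b]\cap\varphi(B\cap L)=\varnothing$.

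The sequence then upcrosses $[a,b]$ infinitely often. Take indices $t_k$ with $\varphi(x_{t_k})\le a$ at the start of a crossing. Then take the first later time at which the interpolated value exceeds $b$, and the last time before it at which the value is at most $a$. On the interval between these times, the interpolated curve has $\varphi\in[a,b]$. Its time length is bounded below, because $X$ is Lipschitz and $\varphi$ is uniformly continuous on $K$. Pass to a limit trajectory $\gamma$ as in Step 1 on a fixed short interval $[0,h]$. It satisfies $\varphi(\gamma(0))=a$ and $\varphi(\gamma(h))\ge\varphi(\gamma(0))$, while $\varphi(\gamma)\in[a,b]$ throughout. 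The point $\gamma(0)\in L$ has $\varphi(\gamma(0))=a\notin\varphi(B\cap L)$, so $\gamma(0)\notin B$, and the strict Lyapunov property forces $\varphi(\gamma(h))<a$. This is a contradiction. Handling the crossing-time bookkeeping, in particular choosing $h$ uniformly, is the delicate part. Therefore $\varphi(x_t)$ converges, and by continuity $\varphi$ is constant on $L$.

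\textbf{Step 3: $L\subseteq B$.} Take $p\in L$ and, by Step 1, a trajectory $\gamma$ in $L$ with $\gamma(0)=p$. By Step 2, $\varphi(\gamma(s))=\varphi(p)$ for all $s$. If $p\notin B$, strict decrease would give $\varphi(\gamma(s))<\varphi(p)$ for $s>0$, which is impossible. Hence $p\in B$. Finally, cluster points of $\{x_t\}$ are exactly the elements of $L$ in Eq.~\eqref{eq:def-limit-set}, so every cluster point lies in $B$.
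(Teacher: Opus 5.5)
Your proof is correct, but it takes a different route from the paper's. The paper also builds the interpolated process and checks that $\|x_{t+1}-x_t\|\to0$. After that it only verifies that this process is a bounded perturbed solution of a cut-off inclusion $\dot x\in\widetilde D(x)$. The cutoff $\chi$ is needed because Theorem~3.6 of \citet{Benaim-2005-Stochastic} requires a globally bounded field. From this the paper concludes that $L$ is internally chain transitive, and it then invokes Lemma~3.5 and Proposition~3.27 of the same reference to obtain $L\subseteq B$ and $\varphi|_L$ constant together. Convergence of $\varphi(x_t)$ is deduced last, from continuity and boundedness.

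You instead prove the needed pieces directly:
\begin{itemize}
\item the compactness and convergence theorem for shifted curves, including invariance of $L$;
\item an upcrossing argument in the spirit of \citet{Davis-2020-Stochastic}, which gives convergence of $\varphi(x_t)$ first;
\item $L\subseteq B$, from invariance together with strict decrease.
\end{itemize}
Your route is self-contained and avoids both chain transitivity and the cutoff, since all compactness is taken along the bounded trajectory. It also makes visible that the Lyapunov condition is only ever applied to trajectories with range in $L$, which is exactly how the lemma is phrased. The paper's route is much shorter but relies entirely on the general theory of Bena\"im, Hofbauer and Sorin.

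One small correction in Step~2. Empty interior of $\varphi(B\cap L)$ does not by itself give an interval $[a,b]$ disjoint from that set: for an arbitrary, non-closed $B$ the set could be dense in $(\varphi_*,\varphi^*)$. You only ever use $a\notin\varphi(B\cap L)$, and such an $a\in(\varphi_*,\varphi^*)$ exists precisely because the interior is empty; any $b\in(a,\varphi^*)$ then works. For the same reason the connectedness of $L$ is not needed, because your crossing construction already produces $\gamma(0)\in L$ with $\varphi(\gamma(0))=a$.
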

\begin{proof}
To apply the arguments from~\citet{Benaim-2005-Stochastic}, we construct the continuous-time version of our updates. Let $\lambda_0=0$ and $\lambda_t=\sum_{k=0}^{t-1}\eta_k$ for $t\geq 1$. Since $\lambda_t\to+\infty$, we define $w:[0,+\infty)\to\br^d$ by
\begin{equation*}
w(\lambda_t+s)=x_t+s\tfrac{x_{t+1}-x_t}{\lambda_{t+1}-\lambda_t}=x_t+s d_t,\quad \textnormal{for } s \in [0,\eta_t) \textnormal{ and } t \geq 0.
\end{equation*}
Accordingly, we define $L_w:=\bigcap_{T \geq 0} \overline{\{w(s):s\geq T\}}$. We claim that $L=L_w$. Since $\|x_{t+1}-x_t\|_2 \leq \eta_t\|d_t\|_2$ and $\eta_t \to 0$, it suffices to show $\sup_{t \geq 0} \|d_t\|_2<+\infty$. Indeed, by the definition of $\varepsilon$-graph enlargement, there exist $\{(y_t, v_t)\}_{t \geq 0}$ such that $\|y_t-x_t\|_2 \leq \varepsilon_t$, $v_t \in D(y_t)$ and $\|v_t-d_t\|_2 \leq \varepsilon_t$. Since $\{x_t\}_{t\geq 0}$ is uniformly bounded and $\varepsilon_t \to 0$, $\{y_t\}_{t \geq 0}$ is uniformly bounded. It follows from the local boundedness of $D$ and $v_t \in D(y_t)$ that $\{v_t\}_{t \geq 0}$ is uniformly bounded, which implies $\sup_{t \geq 0} \|d_t\|_2<+\infty$.

The desired result holds if $L_w$ is internally chain transitive (ICT) for $D$. Indeed, since $L=L_w$ and $L_w$ is ICT,~\citet[Lemma~3.5 and Proposition~3.27]{Benaim-2005-Stochastic} and an empty interior of $\varphi(B\cap L)$ gives $L\subseteq B$ and $\varphi|_L=c$ for some constant $c$. If $\{\varphi(x_t)\}_{t \geq 0}$ does not converge to $c$, the boundedness of $\{x_t\}_{t \geq 0}$ guarantees a subsequence converging to some $\bar{x} \in L$ satisfying $\varphi(\bar{x}) \neq c$. This contradicts the continuity of $\varphi$. Since every cluster point of $\{x_t\}$ belongs to $L$, every cluster point lies in $B$.

To apply \cite[Theorem~3.6]{Benaim-2005-Stochastic} to show that $L_w$ is ICT, we show that $w$ is a bounded perturbed solution of some differential inclusion $\dot{x} \in \widetilde{D}(x)$ where $\widetilde{D}: \br^d \rightrightarrows \br^d$ is globally bounded, convex-valued, has a closed graph, and coincides with $D$ on a neighborhood of $\overline{\{x_t:t \geq 0\}}$. Indeed, we define $\widetilde{D}(x):=\{\chi(x)v: v\in D(x)\}$, where $\chi:\br^d \to [0,1]$ is a continuous compactly supported function that equals one on a neighborhood of $\overline{\{x_t,y_t:t \geq 0\}}$. Since $D$ is locally bounded and $\chi$ is supported on a compact set, $\widetilde{D}$ is globally bounded. $\widetilde{D}$ is convex-valued since $D$ is convex-valued. Moreover, we show that $\widetilde{D}$ has a closed graph. If $x_t \to x$ and $\chi(x_t)v_t \to z$ with $v_t \in D(x_t)$, the local boundedness of $D$ implies that $\{v_t\}_{t \geq 0}$ is bounded. Passing to a subsequence, $v_t \to v \in D(x)$ by the closed graph of $D$, and thus $z = \chi(x)v \in \widetilde{D}(x)$ using the continuity of $\chi$. We then show that $w(s)$ is a bounded perturbed solution of $\dot x \in \widetilde{D}(x)$. Indeed, for $s \in (\lambda_t,\lambda_{t+1})$, we have $\dot{w}(s)=d_t$ and $\|w(s)-x_t\|_2\leq\eta_t\|d_t\|_2$. This implies $\dot{w}(s) \in \widetilde{D}^{\varepsilon_t+\eta_t\|d_t\|_2}(w(s))$ where $\varepsilon_t+\eta_t\|d_t\|_2 \to 0$. In addition, $\{w(s)\}_{s \geq 0}$ is uniformly bounded since $\{x_t\}_{t\geq0}$ is uniformly bounded. 
\end{proof} 

\begin{proof}[Proof of Theorem~\ref{thm:muon-asymptotic-convergence}] By choosing $P_{t+1} \in \operatorname{polar}_{\delta_t}(M_{t+1}/(\|M_{t+1}\|_F+\sigma_t))$, we can rewrite Eq.~\eqref{eq:muon} as $(W_{t+1},M_{t+1}) = (W_{t},M_{t}) + \eta_t (-P_{t+1},\tau(G_t-M_t))$. Letting $Z_t=(W_t,M_t)$ and $D_t=(-P_{t+1},\tau(G_t-M_t))$, we have $Z_{t+1}=Z_t+\eta_t D_t$. Thus, we define the mapping $\FCal: \br^{m\times n} \times \br^{m\times n} \rightrightarrows \br^{m\times n}\times \br^{m\times n}$ as $\FCal(W,M):=\{(-P,\tau(G-M)):G \in \DCal_f(W),P \in \partial(\|\cdot\|_\textnormal{nuc})(M)\}$, where $\|\cdot\|_\textnormal{nuc}$ is the nuclear norm as a function $\br^{m\times n} \to \br$. Then, it suffices to show that the vectorized version of $Z_t$, $D_t$, and $\FCal$ satisfy the conditions of Lemma~\ref{lem:deterministic-asymptotic-convergence-principle}.

We first show that $\{Z_t\}_{t\geq 0}$ is bounded. Indeed, $\{W_t\}_{t\geq 0}$ is bounded. Since $\DCal_f$ is locally bounded, $\{G_t\}_{t\geq 0}$ is bounded. In addition, $M_{t+1}=(1-\tau\eta_t)M_t + \tau\eta_tG_t$ and $0 \leq \tau\eta_t \leq 1$ for all large $t$. Thus, $\{M_t\}$ is bounded. Putting these pieces together yields the desired result. 

We obtain that $\FCal$ satisfies the properties from Lemma~\ref{lem:deterministic-asymptotic-convergence-principle} since $\partial\operatorname{\|\cdot\|}$ and $\DCal_f$ are locally bounded, have closed graphs, and have nonempty convex values. To apply Lemma~\ref{lem:deterministic-asymptotic-convergence-principle}, we have to prove that (i) $D_t \in \FCal^{\varepsilon_t}(Z_t)$ with $\varepsilon_t \to 0$ and (ii) there exists a function $\varphi(Z)$ satisfying the desired properties. 

We show (i). Indeed, we construct $\widehat{Z}_t=(W_t,\widehat{M}_{t+1})$ and $\widehat{D}_t=(-\widehat{P}_{t+1},\tau(G_t-\widehat{M}_{t+1}))$ so that $\widehat{D}_t \in \FCal(\widehat{Z}_t)$, $\|\widehat{Z}_t-Z_t\|_2 \leq \varepsilon_t$, and $\|\widehat{D}_t-D_t\|_2 \leq \varepsilon_t$ with $\varepsilon_t \to 0$. If $M_{t+1}=0$, we set $\widehat{M}_{t+1}=\widehat{P}_{t+1}=0$. Otherwise, we let $k_{t+1}:=\operatorname{rank}(M_{t+1})$, $M_{t+1}=U\operatorname{Diag}(s_1,\ldots,s_{k_{t+1}})V^\top$ and $P_{t+1}=U\operatorname{Diag}(\phi_1,\ldots,\phi_{k_{t+1}})V^\top$ using SVD that outputs $P_{t+1}$ in Eq.~\eqref{eq:inexact-polar-envelope}. We define
\begin{equation*}
I_t := \{1\leq j\leq k_{t+1} :s_j \geq \delta_t(\|M_{t+1}\|_F+\sigma_t)\}
\end{equation*}
and 
\begin{equation*}
\begin{array}{ll}
\widehat{M}_{t+1} := U\operatorname{Diag}(\widehat{s}_1,\ldots,\widehat{s}_{k_{t+1}})V^\top, & \quad \widehat{s}_j = \begin{cases}
s_j, & j\in I_t,\\ 0, & j\notin I_t.\end{cases} \\
\widehat{P}_{t+1} := U\operatorname{Diag}(\widehat{\phi}_1,\ldots,\widehat{\phi}_{k_{t+1}})V^\top, & \quad \widehat{\phi}_j = \begin{cases}
1, & j\in I_t,\\ \min\{\phi_j,1\}, & j\notin I_t.\end{cases}
\end{array}
\end{equation*}
which implies $\|\widehat{M}_{t+1}-M_{t+1}\|_\textnormal{op} \leq \delta_t(\|M_{t+1}\|_F+\sigma_t)$ and $\|\widehat{P}_{t+1}-P_{t+1}\|_\textnormal{op} \leq \delta_t$. Here, we claim that $\widehat{P}_{t+1} \in \partial(\|\cdot\|_\textnormal{nuc})(\widehat{M}_{t+1})$. Indeed, since $\|\widehat{P}_{t+1}\|_\textnormal{op} \leq 1$ and $\langle \widehat{P}_{t+1},\widehat{M}_{t+1}\rangle = \|\widehat{M}_{t+1}\|_\textnormal{nuc}$, we have $\|Y\|_\textnormal{nuc} \geq \langle\widehat{P}_{t+1}, Y\rangle = \|\widehat{M}_{t+1}\|_\textnormal{nuc} + \langle\widehat{P}_{t+1},Y-\widehat{M}_{t+1}\rangle$ for all $Y \in \br^{m\times n}$. Therefore, we have $D_t \in \FCal^{\varepsilon_t}(Z_t)$, where 
$\varepsilon_t=\max\{\|\widehat{M}_{t+1}-M_{t+1}\|_F+\|M_{t+1}-M_t\|_F,\|\widehat{P}_{t+1}-P_{t+1}\|_F+\tau(\|\widehat{M}_{t+1}-M_{t+1}\|_F+\|M_{t+1}-M_t\|_F)\}$. Since $\{G_t\}_{t \geq 0}$ and $\{M_t\}_{t \geq 0}$ are bounded, $\|M_{t+1}-M_t\|_F=\tau\eta_t\|G_t-M_t\|_F\to0$. Since $\delta_t \to 0$ and $\sup_{t \geq 0}\sigma_t<+\infty$, we have $\|\widehat{M}_{t+1}-{M}_{t+1}\|_F \to 0,\|\widehat{P}_{t+1}-P_{t+1}\|_F \to 0$, which further implies $\varepsilon_t \to 0$. Putting these pieces together yields the desired result.  

We show (ii). Indeed, we define the function $\varphi(Z):=f(W)+\tau^{-1}\|M\|_\textnormal{nuc}$ and show that it satisfies the desired properties. We set $L$ as the limit set of $\{Z_t\}_{t \geq 0}$ in Eq.~\eqref{eq:def-limit-set} and $B:=\{Z:0 \in \DCal_f(W), M=0\}$. By Assumption~\ref{assumption:muon-data}, $\varphi(B\cap L) \subseteq \{f(W):0 \in \DCal_f(W)\}$ has the empty interior. Let $Z(s)=(W(s),M(s))$ be a solution of $\dot{Z} \in \FCal(Z)$ whose range lies in $L$. For $s$ almost everywhere, the matrices $P(s):=-\dot{W}(s)$ and $G(s):=M(s)+\tau^{-1}\dot{M}(s)$ satisfy $P(s) \in \partial(\|\cdot\|_\textnormal{nuc})(M(s))$ and $G(s) \in \DCal_f(W(s))$. By using the conservative field chain rule for $f$ and the chain rule for the nuclear norm, we have
\begin{equation*}
\tfrac{d}{ds}\varphi(W(s),M(s)) = \langle G(s),\dot W(s)\rangle +\tau^{-1}\langle P(s),\dot M(s)\rangle = -\|M(s)\|_\textnormal{nuc}.
\end{equation*}
Thus, $\varphi$ is nonincreasing along $Z(s)$. The decrease is strict if $Z(0) \notin B$. Indeed, if $\varphi(Z(T))=\varphi(Z(0))$ for some $T>0$, $\varphi$ is constant on $[0,T]$ and $M(s)=0$ for almost everywhere $s \in[0,T]$. The continuity of $M$ gives $M(s)=0$ for all $s \in [0,T]$. Thus, $\dot{M}=\tau(G-M)$ gives $G(s)=0$ for almost everywhere $s \in [0,T]$. By using the closed graph of $\DCal_f$, we obtain that $0 \in \DCal_f(W(0))$, which implies $Z(0)\in B$. Putting these pieces together yields the desired result.  

By Lemma~\ref{lem:deterministic-asymptotic-convergence-principle}, $L \subseteq B$ and $\{\varphi(Z_t)\}_{t\geq0}$ converges, which implies that every cluster point of $\{W_t\}_{t \geq 0}$ is $\DCal_f$-stationary and every cluster point of $\{M_t\}_{t \geq 0}$ is $0$. As such, $f(W_t)=\varphi(Z_t)-\tau^{-1}\|M_t\|_\textnormal{nuc}$ converges. In addition, if Assumption~\ref{assumption:convex} holds, any cluster point is globally optimal. Since $f(W_t)$ converges and $f$ is continuous, it follows that $\lim_{t\to\infty}f(W_t)=\inf_{W \in \br^{m \times n}} f(W)$. This completes the proof. 
\end{proof}

\subsection{Muon advised by gradient descent}
Muon with fixed momentum can fail on convex Lipschitz objectives, whereas Muon with an appropriately coupled momentum schedule can converge asymptotically. Whether the latter admits a finite-time complexity guarantee remains unclear.
Error feedback with momentum~\citep{Parshakova-2026-Muon} modifies Muon to obtain a finite-time rate. It tracks the discrepancy introduced by orthogonalization and feeds this error into the input of the orthogonalization at the next iteration. However, convergence of this mechanism has not been established for general nonsmooth, nonconvex objectives, and experiments suggest that it can perform worse than unmodified Muon~\citep[Section~5]{Parshakova-2026-Muon}.

We propose Muon Advised by Gradient Descent (MAGD) as an alternative modification of Muon for nonsmooth objectives. Inspired by prediction with expert advice~\citep{Cesa-2006-Prediction}, MAGD evaluates a gradient at $W_t$ and maintains two branches: $U_t$ takes the Muon update, while $S_t$ takes the gradient descent update. MAGD forms each iterate by mixing the two branches and adapts the mixing factor by comparing their decreases under the linearized loss. This rule assigns more weight to the branch with the larger predicted decrease, and hence MAGD remains close to Muon whenever the Muon branch performs better. 

\begin{algorithm}[!t]
\caption{Muon Advised by Gradient Descent (MAGD)}
\label{alg:MAGD}
\begin{algorithmic}[1]
\STATE \textbf{Input:} $W_0\in \br^{m\times n}$, $\eta_t\geq 0$, $\gamma>0$, $\rho\in(0,1)$, $\beta_t^{\rm M}\in[0,1]$, $\eta_t^{\rm M}\ge0$, $\delta_t^{\rm M}\in[0,1)$, $\sigma_t^{\rm M}\geq 0$.
\STATE \textbf{Initialization:} $U_0=S_0=W_0$, $M_0=0$, $z_0=\log(\rho/(1-\rho))$.
\FOR{$t=0,1,\ldots$}
    \STATE $\alpha_t=(1+\exp(-z_t))^{-1}$
    \STATE $W_t=(1-\alpha_t)U_t+\alpha_t S_t$
    \STATE $G_t\in \mathcal D_f(W_t)$
    \STATE $z_{t+1}=z_t-\gamma\eta_t \langle G_t,S_t-U_t\rangle$
    \STATE $S_{t+1}=S_t-\eta_t G_t$
    \STATE $M_{t+1}=\beta_t^{\rm M}M_t+(1-\beta_t^{\rm M})G_t$
    \STATE $U_{t+1}\in U_t-\eta_t^{\rm M}\operatorname{polar}_{\delta_t^{\rm M}}\left(\frac{M_{t+1}}{\|M_{t+1}\|_F+\sigma_t^{\rm M}}\right)$, with $\frac00:=0$.
\ENDFOR
\end{algorithmic}
\end{algorithm}

We next establish the asymptotic convergence of MAGD. The theorem covers two regimes. In the first regime, the GD branch asymptotically dominates the Muon branch, while the Muon branch may use a fixed or arbitrary momentum schedule. In the second regime, neither branch needs to dominate, but the momentum factor of the Muon branch approaches one as in Theorem~\ref{thm:muon-asymptotic-convergence}.

\begin{theorem}
\label{thm:magd-nonconvex-asymptotic}
Suppose that Assumption~\ref{assumption:muon-data} holds. Run Algorithm~\ref{alg:MAGD} with $\gamma>0$, and suppose that a closed ball $K\subseteq \mathbb R^{m\times n}$ contains $\{U_t\}_{t\ge0}$ and $\{S_t\}_{t\ge0}$. Set $L_K:=\sup\{\|G\|_F:W\in K,G\in\mathcal D_f(W)\}$ and $R:=\sup_{t\ge0}\|S_t-U_t\|_F$. Assume that one of the following regimes holds.
\begin{enumerate}
    \item[(i)] We have $\eta_t>0$, $\eta_t\to0$, $\sum_t\eta_t=+\infty$, and $(1-\alpha_t)\eta_t^{\rm M}/\eta_t\to0$.
    \item[(ii)] For some $\tau>0$, we have $\delta_t^{\rm M}\to0$, $\delta_t^{\rm M}\sigma_t^{\rm M}\to0$, $\beta_t^{\rm M}=1-\tau\eta_t^{\rm M}\in[0,1]$, $\eta_t^{\rm M}>0$, $\eta_t^{\rm M}\to0$, $\sum_t\eta_t^{\rm M}=+\infty$, and $\eta_t\ge0$ with $\eta_t/\eta_t^{\rm M}\to r\in[0,+\infty)$. In addition, $\gamma rL_KR\le\tau$. If $r=0$, we also assume that $\limsup_{t\to\infty}\alpha_t<1$.
\end{enumerate}
Then, every cluster point of $\{W_t\}$ lies in $\{W:0\in\mathcal D_f(W)\}$, and $\{f(W_t)\}$ converges. In regime~\textnormal{(ii)}, we additionally have $M_t\to0$.
\end{theorem}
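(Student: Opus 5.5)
In both regimes I will invoke Lemma~\ref{lem:deterministic-asymptotic-convergence-principle}, with a different state, step sequence and Lyapunov function in each regime. The first task is to rewrite the MAGD recursion in terms of $W_t$. Set $\Delta_t:=S_t-U_t$, so that $W_t=U_t+\alpha_t\Delta_t$, and let $P_{t+1}$ denote the chosen inexact polar factor. Then
\begin{equation*}
W_{t+1}-W_t=-\eta_t\alpha_{t+1}G_t-(1-\alpha_{t+1})\eta_t^{\rm M}P_{t+1}+(\alpha_{t+1}-\alpha_t)\Delta_t .
\end{equation*}
The logistic map is $\tfrac14$-Lipschitz, which gives $|\alpha_{t+1}-\alpha_t|\le\tfrac14\gamma\eta_t L_K R$. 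Because $K$ is convex, $W_t$ lies in $K$, so $\|G_t\|_F\le L_K$. The polar factors satisfy $\|P_{t+1}\|_{\rm op}\le 2$. Since $S_t,U_t\in K$, we have $R<\infty$.

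\textbf{Regime (i).} I take the state to be $(W,z)$ and use the step sizes $\eta_t$. The direction is $-\alpha_{t+1}G_t$ plus two error terms. The first error is $(1-\alpha_{t+1})(\eta_t^{\rm M}/\eta_t)P_{t+1}$, which tends to $0$ by hypothesis once $1-\alpha_{t+1}$ is compared with $1-\alpha_t$. These two quantities differ by at most $O(\eta_t)$ and stay within a bounded multiplicative factor of each other, because $z$ moves by at most $\gamma\eta_t L_K R$ per step. The second error is $O(\eta_t)$. The increment of $z$ is $-\gamma\langle G_t,\Delta_t\rangle$.

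The obstacle in this regime is that $\alpha_t$ may tend to $0$, so the field $-\alpha G$ degenerates. I would therefore use the state $(W,\alpha)$ and the field $\{(-\alpha G,\ a):G\in\DCal_f(W)\}$, where $a$ is an element of the bounded interval $[-c,c]$ with $c=\tfrac14\gamma L_K R$, together with the Lyapunov function $f(W)$. The derivative along trajectories is $-\alpha\|G\|^2\le 0$, but strict decrease can fail when $\alpha=0$.

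My fix is to note that $\alpha_t\to0$ is impossible when $W_t$ stays away from stationary points. Write $1-\alpha_t=\sigma(-z_t)$. Then $z_t$ is driven by $-\gamma\eta_t\langle G_t,\Delta_t\rangle$, and $\Delta_t$ evolves as $\Delta_{t+1}=\Delta_t-\eta_tG_t+\eta_t^{\rm M}P_{t+1}$. I expect that $\tfrac12\|\Delta_t\|^2$ plays the role of a potential, since $z_{t+1}-z_t\approx\gamma(\tfrac12\|\Delta_{t+1}\|^2-\tfrac12\|\Delta_t\|^2)$ up to Muon terms. If this holds, $z_t$ stays bounded below, hence $\alpha_t$ stays bounded away from $0$. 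With $\alpha$ bounded away from $0$, the reparametrized field $-\alpha G$ has the same trajectories as $-G$ after a time change, so strict decrease of $f$ holds off $B=\{0\in\DCal_f\}$, and the weak Sard condition gives the empty-interior requirement. I regard this step as the main obstacle. If the potential argument fails, I would instead rescale time by $\alpha_t\eta_t$ and check that $\sum_t\alpha_t\eta_t=\infty$ follows from the same potential bound.

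\textbf{Regime (ii).} I take the state $Z=(U,S,M,z)$ and the steps $\eta_t^{\rm M}$. The increments are
\begin{equation*}
\bigl(-P_{t+1},\ -(\eta_t/\eta_t^{\rm M})G_t,\ \tau(G_t-M_t),\ -\gamma(\eta_t/\eta_t^{\rm M})\langle G_t,S_t-U_t\rangle\bigr).
\end{equation*}
In the limit field, the ratio $\eta_t/\eta_t^{\rm M}$ is replaced by $r$, the polar factor by $\partial\|\cdot\|_{\rm nuc}(M)$ as in the proof of Theorem~\ref{thm:muon-asymptotic-convergence}, and $G\in\DCal_f(W(U,S,z))$ is evaluated at the mixed point. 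The graph-enlargement errors vanish by the same truncation argument as in Theorem~\ref{thm:muon-asymptotic-convergence}; this uses $\delta_t^{\rm M}\sigma_t^{\rm M}\to0$, because $\sigma_t^{\rm M}$ need not be bounded here.

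For the Lyapunov function I would try
\begin{equation*}
\varphi=f(W)+\tau^{-1}\|M\|_{\rm nuc}.
\end{equation*}
Along the limit dynamics, $\dot W=-(1-\alpha)P-\alpha rG+\dot\alpha\Delta$ with $\dot\alpha=-\alpha(1-\alpha)\gamma r\langle G,\Delta\rangle$, so
\begin{equation*}
\dot\varphi=-(1-\alpha)\langle G,P\rangle-\alpha r\|G\|^2-\alpha(1-\alpha)\gamma r\langle G,\Delta\rangle^2+\langle P,G\rangle-\|M\|_{\rm nuc}.
\end{equation*}
The two $\langle G,P\rangle$ terms leave a residual $\alpha\langle G,P\rangle$, which must be absorbed. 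For this I would add $\alpha$ (or the term $\alpha\tau^{-1}\|M\|_{\rm nuc}$) into $\varphi$ as a weight. The condition $\gamma rL_KR\le\tau$ is exactly what bounds the resulting cross term $\dot\alpha\,\|M\|_{\rm nuc}/\tau$ by $\alpha(1-\alpha)\|M\|_{\rm nuc}$, which leaves a nonpositive derivative that vanishes only when $M=0$ and $G=0$. This gives $B=\{M=0,\ 0\in\DCal_f(W)\}$. When $r=0$, the $\alpha$-dynamics freeze, and $\limsup_t\alpha_t<1$ keeps a positive Muon weight so that strictness still holds.

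In both regimes, Lemma~\ref{lem:deterministic-asymptotic-convergence-principle} then yields cluster points in $B$ and convergence of $\varphi$, and in regime~(ii) also $M_t\to0$. Hence $f(W_t)$ converges.
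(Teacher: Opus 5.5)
\textbf{Regime (i).} This part has a genuine gap, in two places.

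\emph{The limit field for $W$ is wrong.} The mixing term is not an error term. $(\alpha_{t+1}-\alpha_t)\Delta_t$ is $O(\eta_t)$ as an increment, so after dividing by $\eta_t$ it is $O(1)$ and equals $-\gamma\alpha_t(1-\alpha_t)\langle G_t,\Delta_t\rangle\Delta_t+O(\eta_t)$. The correct limit field for $W$ is therefore $-\alpha G-\gamma\alpha(1-\alpha)\langle G,\Delta\rangle\Delta$, and this cannot be closed on the state $(W,\alpha)$. The extra term is harmless for $f$, since it contributes $-\gamma\alpha(1-\alpha)\langle G,\Delta\rangle^2\le0$. But the offset must be part of the state. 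The paper uses $E=S-W=(1-\alpha)\Delta$ rather than $\Delta$. The reason is that only $(1-\alpha_t)\eta_t^{\rm M}/\eta_t\to0$ is assumed, so $\Delta$ itself may move at unbounded speed when $\alpha_t\to1$.

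\emph{Your fix for $\alpha=0$ fails.} Expanding $\|\Delta_{t+1}\|_F^2$ gives
\[
z_t=z_0+\tfrac{\gamma}{2}\|\Delta_t\|_F^2-\gamma\textstyle\sum_{s<t}\eta_s^{\rm M}\langle\Delta_s,P_{s+1}\rangle-\tfrac{\gamma}{2}\textstyle\sum_{s<t}\|\eta_sG_s-\eta_s^{\rm M}P_{s+1}\|_F^2 ,
\]
and neither sum need be finite. The hypotheses do not give $\sum_t\eta_t^2<\infty$, and $\eta_t^{\rm M}/\eta_t\to0$ does not make $\sum_t\eta_t^{\rm M}$ finite.

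In fact $\alpha_t\to0$ genuinely occurs. Take $f(W)=|[W]_{11}|$, $W_0=0$, $\eta_t^{\rm M}=0$ and $\eta_t=(t+1)^{-1/2}$. Then $S_t$ oscillates about $0$ with amplitude of order $\eta_t$, and $z_t\to-\infty$. Your potential bound never uses non-stationarity, so it cannot yield $\liminf_t\alpha_t>0$. The time-rescaling fallback rests on the same bound, and $\sum_t\alpha_t\eta_t$ can converge.

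Moreover, Lemma~\ref{lem:deterministic-asymptotic-convergence-principle} does not need a statement about the sequence. It needs strict decrease along every limit trajectory with range in the bounded set $L$, including trajectories that start at $\alpha=0$.

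\emph{The missing idea.} Keep $E$ in the state. A limit trajectory with $\alpha(0)=0$ then has $\alpha\equiv0$, a frozen $W$, and $\dot E=-G\in-\mathcal D_f(W(0))$. Suppose $0\notin\mathcal D_f(W(0))$, and let $G_\star$ be the projection of $0$ onto this compact convex set. Then $\langle G_\star,E(s)\rangle\le\langle G_\star,E(0)\rangle-s\|G_\star\|_F^2$, which contradicts boundedness of $L$. So no trajectory in $L$ starts at $\alpha=0$ off the stationary set. When $\alpha(0)>0$, the ODE for $\alpha$ keeps $\alpha$ positive, so $f$ strictly decreases.

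\textbf{Regime (ii).} Your plan is close to the paper's, which uses the state $(W,S-U,M,\alpha)$. With weight $(1-\alpha)/\tau$ on $\|M\|_{\rm nuc}$, the condition $\gamma rL_KR\le\tau$ does give
\[
\tfrac{d}{ds}\varphi\le-(1-\alpha)^2\|M\|_{\rm nuc}-r\alpha\|G\|_F^2-\gamma r\alpha(1-\alpha)\langle G,\Delta\rangle^2 .
\]

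Your claim that this vanishes only when $M=0$ and $G=0$ is false on the face $\alpha=1$ with $r>0$. There $\alpha\equiv1$ and $\frac{d}{ds}\varphi=-r\|G\|_F^2$ exactly. So $\varphi$ can stay constant with $G=0$ and $M(s)=e^{-\tau s}M(0)\neq0$, and your strictness argument outside $B=\{M=0,\,0\in\mathcal D_f(W)\}$ breaks down. In particular, $M_t\to0$ does not follow.

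The paper fixes this by adding $\frac{r\alpha}{2\tau}\|M\|_F^2$ to $\varphi$. Young's inequality on the resulting term $r\alpha\langle M,G\rangle$, together with $\theta=\gamma rL_KR/\tau\le1$, leaves an extra $-\frac{r\alpha^2}{2}\|M\|_F^2$, which forces $M=0$ when $\alpha=1$.

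Finally, use $\alpha$ rather than $z$ as a coordinate in both regimes. $z_t$ can be unbounded, while the Lemma requires a bounded sequence.
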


We prove two regimes separately. Let $D_t:=S_t-U_t$, $E_t:=S_t-W_t$, and $P_{t+1}\in\operatorname{polar}_{\delta_t^{\rm M}} \left(\frac{M_{t+1}}{\|M_{t+1}\|_F+\sigma_t^{\rm M}}\right)$ such that $U_{t+1}=U_t-\eta_t^{\rm M}P_{t+1}$. Since $K$ is convex, $W_t\in K$. Local boundedness of $\mathcal D_f$ gives $L_K<+\infty$. Hence, $\sup_{t\geq 0}\|D_t\|_F\le R$, $\sup_{t\geq 0}\|G_t\|_F\le L_K$, and $\{E_t\}_{t\geq0}$ is uniformly bounded. Eq.~\eqref{eq:inexact-polar-envelope} gives $\|P_{t+1}\|_\textnormal{op}<2$, and thus $\|P_{t+1}\|_F<2\sqrt{\min\{m,n\}}$. In addition,
\begin{equation}
\label{eq:thm-asymptotic-magd-common-recursion}
\begin{aligned}
D_{t+1}-D_t&=\eta_t^{\rm M}P_{t+1}-\eta_tG_t,\\
W_{t+1}-W_t&=(\alpha_{t+1}-\alpha_t)D_t-(1-\alpha_{t+1})\eta_t^{\rm M}P_{t+1}-\alpha_{t+1}\eta_tG_t,\\
E_{t+1}-E_t&=-(\alpha_{t+1}-\alpha_t)D_t+(1-\alpha_{t+1})(\eta_t^{\rm M}P_{t+1}-\eta_tG_t).
\end{aligned}
\end{equation}
In both regimes, $\eta_t\to0$ and $z_{t+1}-z_t=-\gamma\eta_t\langle G_t,D_t\rangle=O(\eta_t)$. The sigmoid function has a globally bounded second derivative, and thus we have
\begin{equation}
\label{eq:thm-asymptotic-magd-common-alpha}
\alpha_{t+1}-\alpha_t=-\gamma\eta_t\alpha_t(1-\alpha_t)\langle G_t,D_t\rangle+O(\eta_t^2).
\end{equation}

\begin{proof}[Proof of regime (i)] We smooth the update of $\alpha_t$. Let $\Pi_R$ denote the Euclidean projection onto the Frobenius ball of radius $R$ centered at origin. For $E,G\in\br^{m\times n}$ and $\alpha\in\br$, we define
\begin{equation*}
\mathcal H_R(E,\alpha,G):=
\begin{cases}
(1-\alpha)\left\langle G,\Pi_R\left(E/(1-\alpha)\right)\right\rangle
\Pi_R\left(E/(1-\alpha)\right),&\alpha\ne1,\\
0,&\alpha=1.
\end{cases}
\end{equation*}
Since $E_t=(1-\alpha_t)D_t$ and $\|D_t\|_F\le R$, $\mathcal H_R(E_t,\alpha_t,G_t)=(1-\alpha_t)\langle G_t,D_t\rangle D_t$.
For $X=(W,E,\alpha)$, let
\begin{equation}
\label{eq:thm-asymptotic-magd-safe-branch-field}
\FCal_\textnormal{S}(X):=\bigl\{(-\alpha G-\gamma \alpha\mathcal H_R(E,\alpha,G),-(1-\alpha)G+\gamma \alpha\mathcal H_R(E,\alpha,G),-\gamma \alpha\langle G,E\rangle):G\in\mathcal D_f(W)\bigr\}.
\end{equation}
We obtain that $\FCal_\textnormal{S}$ satisfies the properties from Lemma~\ref{lem:deterministic-asymptotic-convergence-principle} since $\mathcal{H}_R$ is continuous in $(E,\alpha,G)$ and linear in $G$ and $\DCal_f$ is locally bounded, has a closed graph, and has nonempty convex values. 
Let $X_t=(W_t,E_t,\alpha_t)$ and $Y_t=\frac{X_{t+1}-X_t}{\eta_t}$. To apply Lemma~\ref{lem:deterministic-asymptotic-convergence-principle}, we have to prove that (a) $Y_t \in \FCal_\textnormal{S}^{\varepsilon_t}(X_t)$ with $\varepsilon_t \to 0$ and (b) there exists a function $\varphi_\textnormal{S}(X)$ satisfying the desired properties. 

We show (a). Let $V_t \in \FCal_\textnormal{S}(X_t)$ where $G=G_t\in\DCal_f(W_t)$. We set $\xi_t:=\frac{\alpha_{t+1}-\alpha_t}{\eta_t}+\gamma\alpha_t\langle G_t,E_t\rangle$. It follows from Eq.~\eqref{eq:thm-asymptotic-magd-common-alpha} that $\xi_t\to0$. In addition,
\begin{equation*}
\begin{aligned}
Y_t-V_t=\bigl(\xi_tD_t-(1-\alpha_{t+1})\tfrac{\eta_t^{\rm M}}{\eta_t}P_{t+1}-(\alpha_{t+1}-\alpha_t)G_t, -\xi_tD_t+(1-\alpha_{t+1})\tfrac{\eta_t^{\rm M}}{\eta_t}P_{t+1}+(\alpha_{t+1}-\alpha_t)G_t,\xi_t\bigr).
\end{aligned}
\end{equation*}
By $e^{-|z_{t+1}-z_t|}\le\tfrac{1-\alpha_{t+1}}{1-\alpha_t}=\tfrac{1+e^{z_t}}{1+e^{z_t+z_{t+1}-z_t}}\le e^{|z_{t+1}-z_t|}$ and $|z_{t+1}-z_t|=O(\eta_t)$, we have $\varepsilon_t:=\|Y_t-V_t\|_2\to0$. 

We show (b). Indeed, we define the continuous function $\varphi_\textnormal{S}(X)=f(W)$ and show that it satisfies the desired properties. Let $L_\textnormal{S}$ be the limit set of $\{X_t\}_{t\geq0}$ defined in Eq.~\eqref{eq:def-limit-set} and $B_\textnormal{S}:=\{(W,E,\alpha):0\in\mathcal D_f(W)\}$. Since $\varphi_\textnormal{S}(B_\textnormal{S}\cap L_\textnormal{S})\subseteq f(\{W:0\in\mathcal D_f(W)\})$, $\varphi_\textnormal{S}(B_\textnormal{S}\cap L_\textnormal{S})$ has empty interior. Let $X(s)=(W(s),E(s),\alpha(s))$ be a solution of $\dot X\in\FCal_\textnormal{S}(X)$ whose range lies in $L_\textnormal{S}$. For $s$ almost everywhere, the matrix $G(s):=-(\dot W(s)+\dot E(s))\in\mathcal D_f(W(s))$. The conservative field chain rule and Eq.~\eqref{eq:thm-asymptotic-magd-safe-branch-field} give
\begin{equation}
\label{eq:thm-asymptotic-magd-safe-branch-dPhi}
\tfrac{\mathrm d}{\mathrm ds}\varphi_\textnormal{S}(X(s))=-\alpha\|G\|_F^2-\gamma \alpha\langle G,\mathcal H_R(E,\alpha,G)\rangle\le0,
\end{equation}
Thus, $\varphi_\textnormal{S}$ is nonincreasing along $X(s)$. To show strict decrease outside $B_\textnormal{S}$, solving $\dot \alpha=-\gamma \alpha\langle G,E\rangle$ gives
\begin{equation}
\label{eq:ode-a}
\alpha(s)=\alpha(0)\exp\left(-\gamma\int_0^s\langle G(u),E(u)\rangle\,\mathrm du\right).
\end{equation}
Thus, $\alpha(0)>0$ implies $\alpha(s)>0$ for every finite $s\ge0$. If $\varphi_\textnormal{S}(X(T))=\varphi_\textnormal{S}(X(0))$ for some $T>0$, Eq.~\eqref{eq:thm-asymptotic-magd-safe-branch-dPhi} implies that $G(s)=0$ for almost every $s\in[0,T]$. The closed graph of $\mathcal D_f$ gives $0\in\mathcal D_f(W(0))$. It remains to consider $\alpha(0)=0$. Eq.~\eqref{eq:ode-a} gives $\alpha(s)=0$ and $W(s)=W(0)$ for all $s\ge0$, and hence $\dot E(s)=-G(s)$ for $s$ almost everywhere. Suppose that $0\notin\mathcal D_f(W(0))$. $\mathcal D_f(W(0))$ is nonempty, compact, and convex. Let $G_\star$ be the Euclidean projection of $0$ onto $\mathcal D_f(W(0))$. Then, $G_\star\ne0$ and $\langle G_\star,G\rangle\ge\|G_\star\|_F^2$ for every $G\in\mathcal D_f(W(0))$. Consequently, we have
\begin{equation*}
\langle G_\star,E(s)\rangle\le\langle G_\star,E(0)\rangle-s\|G_\star\|_F^2.
\end{equation*}
This contradicts the boundedness of $E(s)$. Hence, no solution with range in $L_\textnormal{S}$ starts in the set $\{(W,E,0)\in L_\textnormal{S}:0\notin\mathcal D_f(W)\}$. Therefore, every solution with range in $L_\textnormal{S}$ that starts outside $B_\textnormal{S}$ has $\alpha(0)>0$, and $\varphi_\textnormal{S}(X(s))<\varphi_\textnormal{S}(X(0))$ for every $s>0$. This proves the required properties of $\varphi_\textnormal{S}$.

By Lemma~\ref{lem:deterministic-asymptotic-convergence-principle}, $\{\varphi_\textnormal{S}(X_t)\}_{t\geq0}=\{f(W_t)\}_{t\geq0}$ converges. In addition, if $W_{t_j}\to\overline W$, there exists $(W_{t_{j_k}},E_{t_{j_k}},\alpha_{t_{j_k}})_{k\geq0}$ that converges to some point in $L_\textnormal{S}\subseteq B_\textnormal{S}$. Hence, we have $0\in\mathcal D_f(\overline W)$.
\end{proof}
\begin{proof}[Proof of regime (ii)]
We set $r_t:=\eta_t/\eta_t^{\rm M}$, $r:=\lim_{t\to\infty}r_t$, and $b_t:=(\alpha_{t+1}-\alpha_t)/\eta_t^{\rm M}$. For $X_t:=(W_t,D_t,M_t,\alpha_t)$, Eq.~\eqref{eq:thm-asymptotic-magd-common-recursion} gives $X_{t+1}=X_t+\eta_t^{\rm M}Y_t$, where
\begin{equation*}
Y_t:=\bigl(b_tD_t-(1-\alpha_{t+1})P_{t+1}-r_t\alpha_{t+1}G_t,P_{t+1}-r_tG_t,\tau(G_t-M_t),b_t\bigr).
\end{equation*}
Since $M_{t+1}=(1-\tau\eta_t^{\rm M})M_t+\tau\eta_t^{\rm M}G_t$ and $0\le\tau\eta_t^{\rm M}\le1$, $\{M_t\}_{t\geq0}$ and hence $\{X_t\}_{t\geq0}$ are uniformly bounded. For $X=(W,D,M,\alpha)$, we define
\begin{equation*}
\begin{split}
\FCal_{\rm M}(X):=\bigl\{(&-(1-\alpha)P-r\alpha G-\gamma r\alpha(1-\alpha)\langle G,D\rangle D,P-rG,\\
&\tau(G-M),-\gamma r\alpha(1-\alpha)\langle G,D\rangle):G\in\mathcal D_f(W),P\in\partial(\|\cdot\|_\textnormal{nuc})(M)\bigr\}.
\end{split}
\end{equation*}
We obtain that $\FCal_\textnormal{M}$ satisfies the properties of Lemma~\ref{lem:deterministic-asymptotic-convergence-principle} since $\partial(\|\cdot\|_\textnormal{nuc})$ and $\DCal_f$ are locally bounded, have closed graphs and nonempty and convex values. To apply Lemma~\ref{lem:deterministic-asymptotic-convergence-principle}, we have to prove that (a) $Y_t \in \FCal_\textnormal{M}^{\varepsilon_t}(X_t)$ with $\varepsilon_t \to 0$ and (b) there exists a function $\varphi_\textnormal{M}(X)$ satisfying the desired properties. 

We show (a). We apply the singular value truncation used in the proof of Theorem~\ref{thm:muon-asymptotic-convergence}. It gives $\widehat P_{t+1}\in\partial(\|\cdot\|_\textnormal{nuc})(\widehat M_{t+1})$ such that $\|\widehat M_{t+1}-M_{t+1}\|_\textnormal{op}\le\delta_t^{\rm M}(\|M_{t+1}\|_F+\sigma_t^{\rm M})$ and $\|\widehat P_{t+1}-P_{t+1}\|_\textnormal{op}\le\delta_t^{\rm M}$. Since $\delta_t^{\rm M}\to0$ and $\delta_t^{\rm M}\sigma_t^{\rm M}\to0$, we have $\|\widehat M_{t+1}-M_{t+1}\|_F\to0$, $\|\widehat P_{t+1}-P_{t+1}\|_F\to0$, and $\|M_{t+1}-M_t\|_F=\tau\eta_t^{\rm M}\|G_t-M_t\|_F\to0$. In addition, Eq.~\eqref{eq:thm-asymptotic-magd-common-alpha} gives $\alpha_{t+1}-\alpha_t=O(\eta_t)\to0$. We set $\widehat{X}_t:=(W_t,D_t,\widehat M_{t+1},\alpha_t)$ and let $\widehat Y_t\in\FCal_{\rm M}(\widehat X_t)$ be realized by $(G_t,\widehat P_{t+1})$. Let $e_t:=b_t+\gamma r\alpha_t(1-\alpha_t)\langle G_t,D_t\rangle$. Dividing Eq.~\eqref{eq:thm-asymptotic-magd-common-alpha} by $\eta_t^{\rm M}$, we obtain $b_t=-\gamma r_t\alpha_t(1-\alpha_t)\langle G_t,D_t\rangle+o(1)$ and hence $e_t\to0$. Denoting the four components of $Y_t$ and $\widehat{Y}_t$ by the subscripts $W,D,M,\alpha$, direct subtraction gives
\begin{align*}
Y_{t,W}-\widehat{Y}_{t,W}
={}&e_tD_t+(1-\alpha_t)(\widehat P_{t+1}-P_{t+1})+(\alpha_{t+1}-\alpha_t)P_{t+1}+\bigl((r-r_t)\alpha_t-r_t(\alpha_{t+1}-\alpha_t)\bigr)G_t,\\
Y_{t,D}-\widehat{Y}_{t,D}
={}&P_{t+1}-\widehat P_{t+1}+(r-r_t)G_t,\\
Y_{t,M}-\widehat{Y}_{t,M}
={}&\tau(\widehat M_{t+1}-M_t),\\
Y_{t,\alpha}-\widehat{Y}_{t,\alpha}
={}&e_t.
\end{align*}
Every term on the right tends to zero. In addition, since $\|\widehat{M}_{t+1}-M_{t+1}\|_F\to0$ and $\|{M}_{t+1}-M_{t}\|_F\to0$, we have $\|\widehat X_t-X_t\|\to0$. After vectorizing the matrices, we set $\varepsilon_t:=\|\widehat X_t-X_t\|_2+\|\widehat Y_t-Y_t\|_2\to0$, and thus $Y_t\in\FCal_{\rm M}^{\varepsilon_t}(X_t)$.

We show (b). If $r>0$, we set $\zeta:=0$. If $r=0$, the assumption of regime (ii) lets us choose $\zeta\in(0,1)$ such that $\alpha_t\le1-\zeta$ eventually. Let $L$ be the limit set of $\{X_t\}_{t\geq0}$ defined in Lemma~\ref{lem:deterministic-asymptotic-convergence-principle}. Since $K$ is closed, every $X\in L$ satisfies $W\in K$, $\|D\|_F\le R$, and $0\le\alpha\le1-\zeta$. We set
\begin{equation*}
B:=\{X:M=0,0\in\DCal_f(W)\},\quad \varphi_\textnormal{M}(X):=f(W)+\tfrac{1-\alpha}{\tau}\|M\|_\textnormal{nuc}+\tfrac{r\alpha}{2\tau}\|M\|_F^2.
\end{equation*}
We show that $\varphi_\textnormal{M}$ satisfies the desired properties. Since $\varphi_\textnormal{M}(B\cap L)\subseteq f(\{W:0\in\mathcal D_f(W)\})$, $\varphi_\textnormal{M}(B\cap L)$ has empty interior. In addition, $\varphi_\textnormal{M}$ is continuous. Let $X(s)=(W(s),D(s),M(s),\alpha(s))$ be a solution of $\dot X\in\FCal_{\rm M}(X)$ whose range lies in $L$. For $s$ almost everywhere, the matrices $G:=M+\tau^{-1}\dot M$ and $P:=\dot D+rG$ belong to $\mathcal D_f(W)$ and $\partial(\|\cdot\|_\textnormal{nuc})(M)$, respectively, and realize the inclusion. Then
\begin{equation*} 
\dot\alpha=-\gamma r\alpha(1-\alpha)\langle G,D\rangle,\quad \dot W=-(1-\alpha)P-r\alpha G-\gamma r\alpha(1-\alpha)\langle G,D\rangle D,\quad \dot M=\tau(G-M). \end{equation*}
Using the conservative field chain rule for $f$ and the chain rule for the nuclear norm, we obtain
\begin{align*}
\tfrac{d}{ds}\varphi_\textnormal{M}(X(s))={}&-(1-\alpha)\|M\|_\textnormal{nuc}-r\alpha\|G\|_F^2+r\alpha\langle M,G\rangle-r\alpha\|M\|_F^2\\
&-\gamma r\alpha(1-\alpha)\langle G,D\rangle^2+\tfrac{\gamma r\alpha(1-\alpha)}{\tau}\langle G,D\rangle\|M\|_\textnormal{nuc}-\tfrac{\gamma r^2\alpha(1-\alpha)}{2\tau}\langle G,D\rangle\|M\|_F^2.
\end{align*}
Letting $\theta:=\gamma rL_KR/\tau\in[0,1]$, we obtain
\begin{equation}
\label{eq:thm-asymptotic-magd-dPhi}
\begin{aligned}
\tfrac{d}{ds}\varphi_\textnormal{M}(X(s)) & \leq -(1-\alpha)^2\|M\|_\textnormal{nuc}-\tfrac{r\alpha}{2}\|G\|_F^2-\tfrac{r\alpha^2}{2}\|M\|_F^2\le0.
\end{aligned}
\end{equation}
Thus, $\varphi_\textnormal{M}$ is nonincreasing along $X(s)$.

The decrease is strict outside $B$. Indeed, if $\varphi_\textnormal{M}(X(T))=\varphi_\textnormal{M}(X(0))$ for some $T>0$, then $\varphi_\textnormal{M}$ is constant on $[0,T]$. If $r=0$, we have $1-\alpha\ge\zeta>0$, and thus the first term in Eq.~\eqref{eq:thm-asymptotic-magd-dPhi} gives $M(s)=0$ for almost every $s\in[0,T]$. We then consider the case where $r>0$. If $\alpha(s)<1$, the first term gives $M(s)=0$. If $\alpha(s)=1$, the last term gives $M(s)=0$. Hence, $M(s)=0$ for almost every $s\in[0,T]$ in either case. By continuity, $M(s)=0$ throughout $[0,T]$, and $\dot M=\tau(G-M)$ then gives $G(s)=0$ for almost every $s\in[0,T]$. The closed graph of $\mathcal D_f$ gives $0\in\mathcal D_f(W(0))$. Thus, $X(0)\in B$.

By Lemma~\ref{lem:deterministic-asymptotic-convergence-principle}, $L\subseteq B$ and $\{\varphi_\textnormal{M}(X_t)\}_{t\geq0}$ converges. Letting $\overline W$ be a cluster point of $\{W_t\}_{t\geq0}$, we choose a subsequence $W_{t_j}\to\overline W$. Boundedness of $\{(D_{t_j},M_{t_j},\alpha_{t_j})\}$ gives indices $j_k\to\infty$ such that $X_{t_{j_k}}$ converges to a point of $L\subseteq B$. Hence $0\in\mathcal D_f(\overline W)$. As such, every cluster point of the bounded sequence $\{M_t\}_{t\geq0}$ is $0$. Thus, $\varphi_\textnormal{M}(X_t)-f(W_t)=\tfrac{1-\alpha_t}{\tau}\|M_t\|_\textnormal{nuc}+\tfrac{r\alpha_t}{2\tau}\|M_t\|_F^2\to0$, completing the proof.
\end{proof}

Besides asymptotic convergence, MAGD also admits a finite-time convergence rate for convex Lipschitz objectives under Assumption~\ref{assumption:convex}. 
\begin{theorem}
\label{thm:magd-convex-finite-time-rate}
Suppose that Assumption~\ref{assumption:convex} holds and let $W^\star\in\br^{m\times n}$ satisfy $\|W_0-W^\star\|_\textnormal{op}\le R$. Assume that there exists $G> 0$ such that $|f(X)-f(Y)|\leq G\|X-Y\|_\textnormal{op}$ for any $X,Y\in\br^{m\times n}$ satisfying $\|X-W^\star\|_\textnormal{op}\leq R$ and $\|Y-W^\star\|_\textnormal{op}\leq R$. If we run Algorithm~\ref{alg:MAGD} with
\begin{equation*}
\bar\eta_t>0, \quad
\eta_t = \begin{cases}
\bar{\eta}_t / \|G_t\|_F & G_t\neq0, \\
0 & G_t=0,
\end{cases}
\quad\text{and}\quad
0\le\eta_t^{\rm M}\le\tfrac{\bar\eta_t}{2\sqrt{\min\{m,n\}}},
\end{equation*}
then, for every $t\ge1$,
\begin{equation}
\label{eq:magd-convex-finite-time-rate}
\min_{0\le s<t} f(W_s)-f(W^\star)\le \tfrac{G}{2\sum_{s=0}^{t-1}\bar\eta_s}\left[{\min\{m,n\} R^2}+\tfrac{2}{\gamma}\log(1/\rho)+\sum_{s<t}\bar\eta_s^2+\gamma\sum_{s<t}\bar\eta_s^2\left(\textstyle\sum_{q=0}^{s-1}\bar\eta_q\right)^2\right].
\end{equation}
As a consequence, for any $\epsilon>0$, if we choose $T:=\left\lceil4\min\{m,n\}(\frac{GR}{\epsilon})^2\right\rceil$, $\gamma:=3/(\min\{m,n\}R^2T)$, $\rho:=\exp(-3/(2T))$, $\bar\eta_t=R\sqrt{\min\{m,n\}/T}$, and $\eta_t^{\rm M}\in [0,R/(2\sqrt T)]$, then Algorithm~\ref{alg:MAGD} visits a time step $\hat t\in\argmin_{0\le t<T} f(W_t)$ such that $f(W_{\hat t})-f(W^\star)\le\epsilon$, and the number of subgradient evaluations is
$$O\left(\min\{m,n\}\left(\tfrac{GR}{\epsilon}\right)^2\right).$$
\end{theorem}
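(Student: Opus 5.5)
The plan is the standard regret argument for prediction with expert advice. The convexity gap at $W_t$ splits into a gradient-descent regret term and a mixing regret term, and the two are controlled separately.

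\textbf{Splitting the gap.} If some $G_s=0$ with $s<t$, then $W_s$ is a minimizer by Assumption~\ref{assumption:convex} and the bound is trivial, so assume $G_s\neq0$ throughout. Write $D_t=S_t-U_t$, so that $W_t=S_t-(1-\alpha_t)D_t$. Then
\[
\langle G_t,W_t-W^\star\rangle=\langle G_t,S_t-W^\star\rangle-(1-\alpha_t)\langle G_t,D_t\rangle .
\]
Multiply by $\eta_t$ and sum over $s<t$.

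\textbf{GD branch.} The usual expansion of $\|S_{s+1}-W^\star\|_F^2$ gives
\[
\sum_{s<t}\eta_s\langle G_s,S_s-W^\star\rangle\le\tfrac12\|W_0-W^\star\|_F^2+\tfrac12\sum_{s<t}\bar\eta_s^2 .
\]
Since $\|\cdot\|_F\le\sqrt{\min\{m,n\}}\,\|\cdot\|_\textnormal{op}$, the first term is at most $\tfrac12\min\{m,n\}R^2$.

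\textbf{Mixing branch.} Use the potential $\psi(z)=\log(1+e^{-z})\ge0$. It satisfies $\psi'(z)=-(1-\alpha)$ and $0\le\psi''\le\tfrac14$. A second-order Taylor bound applied to $z_{s+1}=z_s-\gamma\eta_s\langle G_s,D_s\rangle$ gives
\[
-(1-\alpha_s)\gamma\eta_s\langle G_s,D_s\rangle\le\psi(z_s)-\psi(z_{s+1})+\tfrac{\gamma^2}{8}\eta_s^2\langle G_s,D_s\rangle^2 .
\]
Telescoping with $\psi(z_0)=\log(1/\rho)$ and dividing by $\gamma$ bounds the mixing regret by
\[
\tfrac1\gamma\log(1/\rho)+\tfrac\gamma8\sum_{s<t}\eta_s^2\langle G_s,D_s\rangle^2 .
\]
To bound $D_s$: from $D_0=0$ and $D_{q+1}-D_q=\eta_q^{\rm M}P_{q+1}-\eta_qG_q$, together with $\|P_{q+1}\|_F<2\sqrt{\min\{m,n\}}$, the step-size restriction on $\eta_q^{\rm M}$, and $\eta_q\|G_q\|_F=\bar\eta_q$, we get $\|D_s\|_F\le2\sum_{q<s}\bar\eta_q$. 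Hence $\eta_s|\langle G_s,D_s\rangle|\le2\bar\eta_s\sum_{q<s}\bar\eta_q$, and the quadratic term is at most $\tfrac\gamma2\sum_s\bar\eta_s^2(\sum_{q<s}\bar\eta_q)^2$.

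Adding the two branches shows that $\sum_{s<t}\bar\eta_s\langle G_s/\|G_s\|_F,W_s-W^\star\rangle$ is at most one half of the bracket in Eq.~\eqref{eq:magd-convex-finite-time-rate}.

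\textbf{Main obstacle: from the normalized linear gap to the function gap.} This means converting $\langle G_s/\|G_s\|_F,W_s-W^\star\rangle$ into $(f(W_s)-f(W^\star))/G$. The difficulty is that $W_s$ need not lie in the $R$-ball where the Lipschitz bound holds. I will use Nesterov's level-set argument. If $f(W_s)-f(W^\star)>\epsilon_0$, then the halfspace $\{X:\langle G_s,X-W_s\rangle\ge0\}$ contains the sublevel set $\{f\le f(W_s)\}$. That sublevel set contains $\{X:\|X-W^\star\|_\textnormal{op}\le\min\{R,\epsilon_0/G\}\}$. The Frobenius distance from $W^\star$ to the separating hyperplane is $\langle G_s,W_s-W^\star\rangle/\|G_s\|_F$. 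This distance is at least the half-width of that operator-norm ball in the direction $G_s/\|G_s\|_F$. Converting operator-norm width to Frobenius width in direction $G_s$ uses $\|G_s\|_\textnormal{nuc}\ge\|G_s\|_F$.

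Taking $\epsilon_0=\min_{s<t}f(W_s)-f(W^\star)$, this lower-bounds the left side by $\sum_s\bar\eta_s\,\epsilon_0/G$. Dividing through gives Eq.~\eqref{eq:magd-convex-finite-time-rate}. If the argument stalls at the case $\epsilon_0>GR$, I will handle it separately using the $R$-ball bound. This step is the one I expect to need the most care.

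\textbf{The complexity corollary.} Substitute $\bar\eta_s=R\sqrt{\min\{m,n\}/T}$. Then $\sum_s\bar\eta_s=R\sqrt{\min\{m,n\}T}$, and the four bracket terms become
\[
\min\{m,n\}R^2,\qquad \min\{m,n\}R^2,\qquad \min\{m,n\}R^2,\qquad \gamma\min\{m,n\}^2R^4T/3=\min\{m,n\}R^2 .
\]
Their sum is $4\min\{m,n\}R^2$, so the right side of Eq.~\eqref{eq:magd-convex-finite-time-rate} is $2GR\sqrt{\min\{m,n\}/T}\le\epsilon$ by the choice of $T$. With one subgradient evaluation per iteration, this gives the stated $O(\min\{m,n\}(GR/\epsilon)^2)$ count.
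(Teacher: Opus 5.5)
Your proposal is correct and follows essentially the same route as the paper. It uses the same split $W_s=S_s-(1-\alpha_s)(S_s-U_s)$, the same logistic potential $\log(1+e^{-z})$ with the bound $\|S_s-U_s\|_F\le 2\sum_{q<s}\bar\eta_q$, and the same operator--nuclear duality to turn the normalized linear gap into a function gap. The paper writes that last step constructively, through the point $Y_j=W^\star+\theta_jD_j$ on the supporting hyperplane at the minimizing index $j$; your ball-inside-the-halfspace argument is the dual form of the same step.

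Two small fixes are needed. The sublevel set $\{f\le f(W_s)\}$ lies in $\{X:\langle G_s,X-W_s\rangle\le 0\}$, not in the halfspace with $\ge 0$. Also, the case $\epsilon_0>GR$ never occurs, since $\epsilon_0\le f(W_0)-f(W^\star)\le GR$ by the hypothesis on $W_0$.
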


\begin{proof}
Let $r:=\min\{m,n\}$ and $A_s:=\sum_{q=0}^{s-1}\bar\eta_q$, with $A_0:=0$. Let $\widetilde G_s:=G_s/\|G_s\|_F$ with $0/0:=0$. Letting $P_{s+1}\in\operatorname{polar}_{\delta_s^{\rm M}}\left(\frac{M_{s+1}}{\|M_{s+1}\|_F+\sigma_s^{\rm M}}\right)$ so that $U_{s+1}=U_s-\eta_s^{\rm M}P_{s+1}$, we get $\|P_{s+1}\|_\textnormal{op}\le2$ and $\|P_{s+1}\|_F\le2\sqrt r$. Since $S_0=U_0$, $S_{s+1}=S_s-\bar\eta_s\widetilde G_s$, and $2\sqrt r\,\eta_s^{\rm M}\le\bar\eta_s$, induction gives $\|S_s-U_s\|_F\le2\sum_{q=0}^{s-1}\bar\eta_q=2A_s$.

We bound the linearized function gap. Letting $\Phi(z):=\log(1+e^{-z})$, we have $-\Phi'(z_s)=1-\alpha_s$ and $0\le\Phi''\le1/4$, and thus $\Phi(z_{s+1})
\le \Phi(z_s)+\gamma\bar\eta_s(1-\alpha_s)\langle\widetilde G_s,S_s-U_s\rangle
+\tfrac{\gamma^2\bar\eta_s^2}{8}\langle\widetilde G_s,S_s-U_s\rangle^2$.
Since $|\langle\widetilde G_s,S_s-U_s\rangle|\le2A_s$, summing from $s=0$ to $t-1$, we have
\begin{equation}
\label{eq:potential-estimate}
-\sum_{s<t}\bar\eta_s(1-\alpha_s)\langle\widetilde G_s,S_s-U_s\rangle
\le\tfrac{\Phi(z_0)-\Phi(z_t)}{\gamma}
+\tfrac{\gamma}{8}\sum_{s<t}\bar\eta_s^2\langle\widetilde G_s,S_s-U_s\rangle^2
\le\tfrac{\log(1/\rho)}{\gamma}+\tfrac{\gamma}{2}\sum_{s<t}\bar\eta_s^2A_s^2.
\end{equation}
It follows from $S_{s+1}=S_s-\bar\eta_s\widetilde G_s$ that $\bar\eta_s\langle\widetilde G_s,S_s-W^\star\rangle \le\tfrac12\left(\|S_s-W^\star\|_F^2-\|S_{s+1}-W^\star\|_F^2\right)+\tfrac{\bar\eta_s^2}{2}$. Summing over $s=0,\ldots,t-1$, we obtain
\begin{equation*}
\sum_{s<t}\bar\eta_s\langle\widetilde G_s,S_s-W^\star\rangle
\le\tfrac12\|W_0-W^\star\|_F^2-\tfrac12\|S_t-W^\star\|_F^2+\tfrac12\sum_{s<t}\bar\eta_s^2
\le\tfrac12\|W_0-W^\star\|_F^2+\tfrac12\sum_{s<t}\bar\eta_s^2.
\end{equation*}
Since $W_s=S_s-(1-\alpha_s)(S_s-U_s)$, combining the above inequality with Eq.~\eqref{eq:potential-estimate} gives
\begin{align*}
\sum_{s<t}\bar\eta_s\langle\widetilde G_s,W_s-W^\star\rangle
&=\sum_{s<t}\bar\eta_s\langle\widetilde G_s,S_s-W^\star\rangle
-\sum_{s<t}\bar\eta_s(1-\alpha_s)\langle\widetilde G_s,S_s-U_s\rangle\\
&\le\tfrac r2\|W_0-W^\star\|_\textnormal{op}^2+\tfrac{\log(1/\rho)}{\gamma}+\tfrac12\sum_{s<t}\bar\eta_s^2+\tfrac{\gamma}{2}\sum_{s<t}\bar\eta_s^2A_s^2=:C_t.
\end{align*}

We convert the inequality above to Eq.~\eqref{eq:magd-convex-finite-time-rate}. Let $j<t$ minimize $v_s:=\langle\widetilde G_s,W_s-W^\star\rangle$. Then, $v_j\le C_t/A_t$. If $C_t/A_t\ge R$, $\min_{s<t}f(W_s)-f(W^\star)\le f(W_0)-f(W^\star)\le GR\le GC_t/A_t$.
We therefore suppose that $C_t/A_t<R$. If $v_j<0$, the convexity of $f$ gives $f(W_j)-f(W^\star)<0$. If $G_j=0$, $W_j$ is a minimizer of $f$. Thus, it remains to consider $v_j\ge0$ and $G_j\ne0$. We choose $D_j\in\partial(\|\cdot\|_\textnormal{nuc})(G_j)$, $\theta_j:=\|G_j\|_Fv_j/\|G_j\|_\textnormal{nuc}$, and $Y_j:=W^\star+\theta_jD_j$ so that $\langle G_j,W_j-Y_j\rangle=0$. We then have
\begin{equation*}
0\le\theta_j\le v_j\le C_t/A_t<R,\quad \|Y_j-W^\star\|_\textnormal{op}\le R.
\end{equation*}
Convexity and Lipschitz continuity therefore proves Eq.~\eqref{eq:magd-convex-finite-time-rate} by
\begin{equation*}
f(W_j) \leq f(Y_j) \leq f(W^\star)+G\theta_j \leq f(W^\star)+GC_t/A_t.
\end{equation*}

We derive the complexity bound using Eq.~\eqref{eq:magd-convex-finite-time-rate}. For the stated choices of $T,\gamma,\rho,\bar\eta_t$, we have $A_s=sR\sqrt{r/T}$. Hence, it follows that $\sum_{s<T}\bar\eta_s^2A_s^2=R^4\frac{r^2}{T^2}\sum_{s<T}s^2\le\frac13R^4r^2T$. We also have $\sum_{s<T}\bar\eta_s=R\sqrt{rT}$, $\sum_{s<T}\bar\eta_s^2=rR^2$, and $\log(1/\rho)/\gamma=rR^2/2$. Thus, each of the four terms defining $C_T$ is at most $rR^2/2$ and $C_T\le2rR^2$. It follows from Eq.~\eqref{eq:magd-convex-finite-time-rate} that $\min_{s<T}f(W_s)-f(W^\star)\le2GR\sqrt{\tfrac rT}\le\epsilon$.
\end{proof}

\subsection{Oracle complexity in operator-norm geometry}\label{subsec:lower-bound-operator-norm}
While Theorem~\ref{thm:magd-convex-finite-time-rate} guarantees that MAGD attains an $\epsilon$-optimal function value in $O(\min\{m,n\}(GR/\epsilon)^2)$ subgradient evaluations under Assumption~\ref{assumption:convex}, is such dimension-dependent factor intrinsic to operator-norm geometry? We provide an affirmative answer by showing that this factor is unavoidable for first-order methods when $\max\{m, n\}$ is sufficiently large. Our analysis is based on the local first-order oracle model and notation of Section~\ref{subsec:oracle-model}. We recall $\mathfrak{m}_\textnormal{wcc}$ and $\mathfrak{m}_\textnormal{dc}$ (see Eq.~\eqref{eq:oracle-worst-case-complexity} and~\eqref{eq:oracle-distributional-complexity}) and define
\begin{equation*}
\XCal_\textnormal{op}(R) := \{W \in \br^{m\times n}:\|W\|_\textnormal{op} \leq R\}.
\end{equation*}
and 
\begin{equation*}
\FCal_\textnormal{op}(G,R) := \left\{f:\br^{m\times n} \to \br \ \middle| \
\begin{array}{l}
f\textnormal{ is convex and }G \textnormal{-Lipschitz with respect to}\\
\|\cdot\|_\textnormal{op} \textnormal{ on an open set containing }\XCal_\textnormal{op}(R)
\end{array}
\right\}.
\end{equation*}
\begin{theorem}\label{thm:oracle-complexity}
If $\max\{m,n\}>(GR/\epsilon)^2>1$, we have
\begin{equation*}
\Omega\left(\min\{m,n\}\left(\tfrac{GR}{\epsilon}\right)^2\right) = \mathfrak{m}_\textnormal{dc}(\XCal_\textnormal{op}(R),\FCal_\textnormal{op}(G,R),\mathsf{O},\epsilon) \leq \mathfrak{m}_\textnormal{wcc}(\XCal_\textnormal{op}(R),\FCal_\textnormal{op}(G,R),\mathsf{O},\epsilon) = O\left(\min\{m,n\}\left(\tfrac{GR}{\epsilon}\right)^2\right), 
\end{equation*}
where $\mathsf{O}$ is a local first-order oracle on $\FCal_\textnormal{op}(G, R)$.
\end{theorem}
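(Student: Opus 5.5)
The proof has three parts: the middle inequality, the upper bound, and the lower bound.

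The middle inequality is Eq.~\eqref{eq:oracle-complexity-order}, so nothing new is needed there.

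\textbf{Upper bound.} We invoke Theorem~\ref{thm:magd-convex-finite-time-rate}, with one adjustment: the minimum is over $\XCal_\textnormal{op}(R)$, so the comparator $W^\star$ is a minimizer of $f$ on that compact set and not a global one. The proof of Theorem~\ref{thm:magd-convex-finite-time-rate} only uses the inequality $f(W_j)\le f(Y_j)$ for $Y_j$ near $W^\star$, together with the online-gradient inequality against an arbitrary comparator. To avoid re-proving it, we instead run the algorithm on the extension $\tilde f(W)=\inf_{Y\in\XCal_\textnormal{op}(R)}\{f(Y)+G\|W-Y\|_\textnormal{op}\}$. This function is convex and $G$-Lipschitz everywhere, agrees with $f$ on $\XCal_\textnormal{op}(R)$, and has the same minimum there. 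Its subgradients at queried points can be simulated from the oracle only if the iterates stay inside, so we modify MAGD to report the metric projection of $W_{\hat t}$ onto $\XCal_\textnormal{op}(R)$; this projection does not increase $\tilde f$ beyond $f$. We start at $W_0=0$, so that $\|W_0-W^\star\|_\textnormal{op}\le R$, and the iteration count is $O(\min\{m,n\}(GR/\epsilon)^2)$. If this turns out to be delicate, the fallback is a standard projected subgradient method in Frobenius geometry on $\XCal_\textnormal{op}(R)$: its Frobenius diameter is $2R\sqrt{\min\{m,n\}}$ and its Frobenius Lipschitz constant is at most $G$, which gives the same bound.

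\textbf{Lower bound.} By Eq.~\eqref{eq:oracle-complexity-order} it suffices to bound $\mathfrak m_{\rm dc}$ from below. Without loss of generality take $m\ge n$ and set $k:=\lfloor c(GR/\epsilon)^2\rfloor<m$. The hard instances are
\[
f_{\sigma,\pi}(W)=G\max_{1\le i\le k,\,1\le j\le n}\ \langle \sigma_{ij}\,u_{\pi(i,j)}v_j^\top,\ W\rangle .
\]
Here $v_1,\dots,v_n$ is the standard basis of $\br^n$, and $u_{\pi(i,j)}$ are orthonormal vectors in $\br^m$ drawn at random; one would use nearly orthogonal random vectors when $m<kn$. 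The signs $\sigma_{ij}$ are random and are revealed in a random order, as in Nemirovski's hidden-direction argument. To put $f$ in $\FCal_\textnormal{op}(G,R)$ we make each term $G$-Lipschitz in $\|\cdot\|_\textnormal{op}$ by normalizing the rank-one matrices by $\|uv^\top\|_\textnormal{nuc}=1$.

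The key mechanism is a resisting-oracle or information argument. Using locality of $\mathsf O$, after $q$ queries the algorithm has learned at most $q$ of the hidden directions. Since $\max$ over unseen directions is unaffected by earlier queries, one shows that $f$ stays above $-G R/\sqrt{k}$-type levels unless roughly $k n$ directions have been discovered. The optimum is attained at $W=-R\sum_{j}\tilde u_j v_j^\top$, which has operator norm $R$ precisely because the $n$ columns can be handled simultaneously. This gives optimal value about $-GR/\sqrt{k}$, whereas any point depending on fewer than $kn/2$ revealed directions has value at least about $-GR/(2\sqrt k)$. The $n=\min\{m,n\}$ factor comes from the fact that each column index contributes independently, with nuclear--operator duality across columns. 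The hypothesis $\max\{m,n\}>(GR/\epsilon)^2$ is needed so that the $k$ rows of hidden directions fit inside $\br^m$.

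The main obstacle is exactly this accounting: making the operator-norm budget $R$ be shared over $n$ independent column-blocks, so that the optimum gains the full $\epsilon$ only when all blocks are solved, while each single query reveals only one block-direction. Formally this requires a direct-sum construction $f=\sum_j f_j$ with nearly orthogonal rank-one pieces in each block, and a union/averaging bound over the random hidden directions.
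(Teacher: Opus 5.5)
Your upper bound works via the fallback. Projected Frobenius subgradient descent on $\XCal_{\mathrm{op}}(R)$ has Frobenius radius $R\sqrt{\min\{m,n\}}$, and subgradients there have Frobenius norm at most $G$, so it gives $O(\min\{m,n\}(GR/\epsilon)^2)$. Unlike the paper's direct appeal to Theorem~\ref{thm:magd-convex-finite-time-rate}, which asks for $G$-Lipschitzness on an $R$-ball around $W^\star$, it only uses $f$ on $\XCal_{\mathrm{op}}(R)$. Drop the $\tilde f$ route: subgradients of $\tilde f$ at iterates outside the ball are not available from the oracle, and projecting only the output does not fix the queries.

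The lower bound has a genuine gap exactly at the step you call the key mechanism. Your instances are maxima of \emph{linear} pieces $A_{ij}=\sigma_{ij}u_{\pi(i,j)}v_j^\top$, so every piece is active at $W=0$. The minimum-norm-subgradient oracle is a legitimate local first-order oracle, and the statement must hold for every such oracle. At $W=0$ it returns $\frac{G}{M}\sum_{i,j}A_{ij}$. Its $j$-th column is proportional to $\sum_i\sigma_{ij}u_{\pi(i,j)}$, so one query already hands the algorithm your optimizer. The claim ``after $q$ queries at most $q$ directions are learned'' is therefore false. The paper breaks ties with independent offsets, $F_{s,\theta}(X)=\max_a[s_a\langle A_a,X\rangle+\theta_a]$ with $\Theta\sim\mathrm{Unif}([0,\tau]^M)$. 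Via locality and a countability argument in each $\Theta_a$, it shows that almost surely every query activates at most one new piece.

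Even then, counting active directions is the wrong currency. A piece that is inactive at $X$ while $|\langle A_a,X\rangle|$ is large has its sign forced, so one query can reveal many signs. Since the claim concerns $\mathfrak{m}_{\mathrm{dc}}$, this must be controlled under a fixed distribution; a resisting oracle would only bound $\mathfrak{m}_{\mathrm{wcc}}$. Proposition~\ref{prop:lb-affine-reduction} does this as follows:
\begin{enumerate}
\item each query of $\mathsf{A}$ is simulated by one string-guessing query, guessing the ``low'' signs of unknown pieces in decreasing order of $h_a=\theta_a+|\langle A_a,X\rangle|$;
\item locality makes the true reply equal $\mathsf{O}_{F_{\bar s,\theta}}(X)$;
\item an $\epsilon$-optimal output must make every piece negative for the good sign patterns, so it decodes $s$;
\item Lemma~\ref{lem:string-guessing} then gives $\EE[\TCal]\ge\tfrac12((1-p)M-\HH(p))$.
\end{enumerate}

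Your block layout is the paper's, transposed: $\min\{m,n\}$ blocks of $k\approx(GR/\epsilon)^2$ unit-nuclear-norm rank-one pieces. But the step producing the $\min\{m,n\}$ factor is not secured. Within a block only $k<\max\{m,n\}$ directions are needed, so they can be exactly orthonormal. What must be controlled is the operator norm of the matrix of aggregated block vectors, simultaneously for most sign patterns of one fixed family. This matters in the regime $k\min\{m,n\}>\max\{m,n\}$, which the hypothesis allows, and ``nearly orthogonal random vectors'' does not address it.

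The direct sum $f=\sum_jf_j$ in your last paragraph actually destroys the bound. A subgradient of a sum splits column by column and exposes an active piece in every block at once. After the rescaling by order $1/\min\{m,n\}$ that $G$-Lipschitzness in $\|\cdot\|_{\mathrm{op}}$ requires, its subgradients have Frobenius norm of order $G/\sqrt{\min\{m,n\}}$. Frobenius subgradient descent then solves it in $O((GR/\epsilon)^2)$ steps, so the single max over all $M=k\min\{m,n\}$ pieces is essential.

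In your orientation, the paper's device is as follows:
\begin{enumerate}
\item take the block-$j$ directions to be $Q_je_1,\dots,Q_je_k$ with independent Haar rotations $Q_j$;
\item for each fixed sign pattern, the aggregated vectors $Q_jw_j$ are independent and uniform on the sphere;
\item a sub-gaussian operator-norm bound gives norm at most $c_0$ with probability at least $7/8$;
\item averaging over $Q$ fixes one deterministic family in which $7/8$ of sign patterns admit a feasible point making every piece equal to $-1/(c_0\sqrt k)$.
\end{enumerate}
The separate case $k=1$ (diagonal pieces) and the $(G,R)$ rescaling complete the argument.
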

We divide the proof into two steps: (i) construct the hard instances in operator-norm geometry and reformulate the resulting optimization problem as the string guessing problem \citep[Oracle~III.2 and Proposition~III.3]{Braun-2017-Lower}, and (ii) apply the lower bound for solving the string guessing problem to yield the desired result. 
\begin{definition}[string guessing oracle] \label{def:string-guessing-oracle}
Fixing $M \geq 1$ and any hidden sign vector $S \in \{\pm1\}^M$, a query to the string guessing oracle on $S$ is $(a_i,g_i)_{i=1}^\ell$, where $0 \leq \ell \leq M$, indices $\{a_i\}_{i=1}^\ell \subseteq \{1,2,\ldots,M\}$, and guesses $\{g_i\}_{i=1}^\ell\in\{\pm1\}^\ell$. If there exists $1 \leq i \leq \ell$ such that $S_{a_i}\ne g_i$, the oracle returns the first mismatch $\inf\{1 \leq i \leq \ell: S_{a_i} \neq g_i\}$. Otherwise, the oracle returns \textnormal{\textsc{equal}}.
\end{definition}
For simplicity, we define $[N]:=\{1,\ldots,N\}$ for any integer $N \geq 1$.
A string guessing algorithm recovers $S \in \{\pm1\}^M$ by querying the string guessing oracle $\OCal$. For $q \geq 1$, $t \in [q]$, $0 \leq \ell_t \leq M$, $\{a_i^t\}_{i=1}^{\ell_t} \subseteq [M]$, $\{g_i^t\}_{i=1}^{\ell_t}\in\{\pm1\}^{\ell_t}$ and $Q_t=(a_i^t,g_i^t)_{i=1}^{\ell_t}$, it keeps $H_q = (Q_t,\OCal(Q_t))_{t\in[q]}$ with $H_0=\emptyset$, and uses a decision rule based on $H_q$ that either returns some $\widehat{S} \in \{\pm1\}^M$ or the next query $(a_i^{q+1},g_i^{q+1})_{i=1}^{\ell_{q+1}}$. A consequence of~\cite[Proposition~III.3]{Braun-2017-Lower} shows that $\OCal$ hides useful information from any randomized algorithm attempting to recover $S \sim \textnormal{Unif}(\{\pm1\}^M)$. We define the binary entropy by
\begin{equation*}
\HH: [0,1] \to \br,\quad \HH(p):=-p\log_2p-(1-p)\log_2(1-p). 
\end{equation*}
We summarize this result in the following lemma. 
\begin{lemma} \label{lem:string-guessing}
Let $M\geq 1$ and $S \sim \textnormal{Unif}(\{\pm1\}^M)$. For any string guessing algorithm $\mathsf{A}$ that accesses $S$ only through the string guessing oracle and stops with an output $\widehat{S}(\mathsf{A},S)$ after $\mathcal{T}_{\mathsf{A}}(S)$ queries, we have
\begin{equation*}
\EE[\TCal_{\mathsf{A}}(S)]\geq \tfrac{1}{2}((1-\bp\{\widehat{S}(\mathsf{A},S)\neq S\})M-\HH(\bp\{\widehat{S}(\mathsf{A},S)\neq S\})).
\end{equation*}
As a consequence, the lower bound on $\EE_S[\TCal_{\mathsf{A}}(S)]$ increases in $M$ for given $\bp\{\widehat{S}(\mathsf{A},S)\neq S\} \leq \frac{1}{2}$ fixed, and decreases in $\bp\{\widehat{S}(\mathsf{A},S)\neq S\} \leq \frac{1}{2}$ given $M$ fixed. 
\end{lemma}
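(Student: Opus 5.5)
We prove Lemma~\ref{lem:string-guessing} with an information-theoretic argument in the style of~\citet[Proposition~III.3]{Braun-2017-Lower}. The idea is that each oracle answer carries little information about $S$ on average, while recovering $S$ with small error requires about $M$ bits. Write $H_{\TCal}$ for the full transcript at termination, so that $\widehat{S}$ is a deterministic function of $H_{\TCal}$. A randomized algorithm is a mixture of deterministic ones, and the claimed bound is linear in $\EE[\TCal]$ and in the error probability except for the concave $\HH$ term. It therefore suffices to treat deterministic $\mathsf{A}$; at the end we average and use concavity of $\HH$ (Jensen goes the right way because $-\HH$ is convex).

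\textbf{Step 1 (Fano).} Since $S$ is uniform on $\{\pm1\}^M$, we have $\mathrm{H}(S)=M$. Fano's inequality gives $\mathrm{H}(S\mid \widehat S)\le \HH(p_e)+p_e\log_2(2^M-1)\le \HH(p_e)+p_eM$, where $p_e=\bp\{\widehat S\neq S\}$. Hence $I(S;H_\TCal)\ge I(S;\widehat S)\ge (1-p_e)M-\HH(p_e)$.

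\textbf{Step 2 (information per query).} We show $I(S;H_\TCal)\le 2\EE[\TCal]$; this is the main step. By the chain rule, $I(S;H_\TCal)=\sum_{t\ge1}\EE\bigl[\mathbf 1\{\TCal\ge t\}\,\mathrm{H}(\OCal(Q_t)\mid H_{t-1})\bigr]$. Each query $Q_t$ is determined by $H_{t-1}$, and the answer is a deterministic function of $S$, so the mutual information per step equals the conditional entropy of the answer. Conditioned on $H_{t-1}$, the posterior of $S$ is uniform on a set of strings with some coordinates fixed. We handle this by restricting to the guesses $a_i^t$ whose coordinates are not yet determined; already-known coordinates give deterministic comparisons. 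For each undetermined coordinate, the mismatch event has conditional probability $1/2$ independently of the others. The answer is then "first mismatch at position $k$" with geometrically decaying probabilities $2^{-k}$, or \textsc{equal}. The entropy of this distribution is at most $\sum_k k2^{-k}=2$. Summing gives $I(S;H_\TCal)\le 2\EE[\TCal]$. We must be careful with the random stopping time, but $\{\TCal\ge t\}$ is $H_{t-1}$-measurable, so the chain-rule decomposition is valid.

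\textbf{Step 3 (conclusion and monotonicity).} Combining Steps 1 and 2 yields $\EE[\TCal_{\mathsf A}(S)]\ge\tfrac12\bigl((1-p_e)M-\HH(p_e)\bigr)$. The monotonicity claims follow directly. The bound is increasing in $M$ because $1-p_e>0$. For $p_e\le\tfrac12$, both $-(p_eM)$ and $-\HH(p_e)$ are nonincreasing in $p_e$, since $\HH$ is increasing on $[0,\tfrac12]$. So the bound decreases in $p_e$.

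The hardest part is the per-query entropy bound in Step 2. The key point is that the conditional distribution of the first-mismatch index, given the past transcript, is dominated by a geometric law. We would verify this by induction on the posterior structure: uniform over the strings consistent with the transcript, which is a product of constraints of the form "these guesses matched, this one mismatched."
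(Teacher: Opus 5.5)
Your argument is correct and is essentially the information-theoretic proof behind \citet[Proposition~III.3]{Braun-2017-Lower}, which the paper cites in place of a proof. Fano's inequality shows that $(1-p_e)M-\HH(p_e)$ bits about $S$ must be extracted, and because the posterior is uniform on a subcube, each first-mismatch answer is a function of $\min\{K,N+1\}$ with $K\sim\mathrm{Geom}(1/2)$, where $N$ counts the fresh coordinates before any forced mismatch; so each query yields at most $2$ bits, and your randomized-to-deterministic reduction via concavity of $\HH$ is also sound. One imprecision is cosmetic only: the terminal atom, of mass $2^{-N}$, may be a forced mismatch at an already-determined or repeated coordinate rather than \textsc{equal}, which leaves the entropy bound unchanged.
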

The following proposition shows that optimizing a max-of-affine function whose $M$ affine pieces encode a hidden vector $S \sim \operatorname{Unif}(\{\pm1\}^M)$ is at least as hard as recovering $S$ through the string guessing oracle. We recall from Section~\ref{subsec:oracle-model} that $\mathcal{A}(\mathsf O)$ is the set of deterministic algorithms using oracle $\mathsf{O}$, and $\mathsf{T}_{\mathsf A}(f,\epsilon)$ is the number of queries made by $\mathsf A\in\mathcal{A}(\mathsf{O})$ to reach an $\epsilon$-optimal solution of $f$.

\begin{proposition} \label{prop:lb-affine-reduction}
Let $M \geq 1$, $\tau>0$, and $A=(A_a)_{a \in [M]}\in(\br^{m\times n})^M$ such that $\|A_a\|_\textnormal{nuc} \leq 1$ for every $a \in [M]$. For $s \in \{\pm1\}^M$ and $\theta \in [0,\tau]^M$, let $F_{s,\theta}: \br^{m\times n} \to \br$, $F_{s,\theta}(X):=\max_{a \in [M]}[s_a\langle A_a,X\rangle+\theta_a]$. We draw $S \sim \operatorname{Unif}(\{\pm1\}^M)$ and $\Theta \sim \operatorname{Unif}([0,\tau]^M)$ independently. For any $\delta_0>0$, any local first-order oracle $\mathsf{O}$ on $\FCal_{\textnormal{op}}(1,1)$, and any $\mathsf{A} \in \ACal(\mathsf{O})$ such that $\bp(\mathsf{T}_{\mathsf{A}}(F_{S,\Theta},\delta_0)<+\infty)=1$, for almost every $\theta \in [0,\tau]^M$, there exists a string guessing algorithm $\mathsf{B}_\theta$ such that the string guessing oracle responses that $\mathsf{B}_\theta$ receives on any $s\in\{\pm1\}^M$ determine the queries, oracle responses, and output of $\mathsf{A}$ on $F_{s,\theta}$ with oracle $\mathsf{O}$, $\TCal_{\mathsf{B}_\theta}(s) \leq \mathsf{T}_{\mathsf{A}}(F_{s,\theta},\delta_0)$, and $\bp\{\widehat{S}(\mathsf{B}_\theta,S)\neq S\}\leq 1-|\GCal_{A,4\tau+\delta_0}|/2^M$, where
\begin{equation*}
\GCal_{A,\gamma} := \{s\in\{\pm1\}^M: \textnormal{ there exists } X \in \XCal_{\textnormal{op}}(1)\textnormal{ such that } s_a\langle A_a,X\rangle \leq -\gamma\textnormal{ for all } a \in [M]\}.
\end{equation*}
\end{proposition}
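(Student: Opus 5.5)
The plan is to follow the reduction of \citet[Proposition~III.3]{Braun-2017-Lower}. Fix a realization $\theta$ and let $\mathsf{B}_\theta$ simulate $\mathsf{A}$ on $F_{s,\theta}$, spending exactly one string guessing query per first-order query of $\mathsf{A}$. The simulation rests on locality. If, at a query point $X$, the maximum in $F_{s,\theta}(X)$ is attained by a \emph{unique} index $a^\ast$, then $F_{s,\theta}$ coincides near $X$ with the affine function $\ell(Y)=s_{a^\ast}\langle A_{a^\ast},Y\rangle+\theta_{a^\ast}$. Since $\|A_{a^\ast}\|_\textnormal{nuc}\le1$, operator--nuclear duality shows that $\ell$ is convex and $1$-Lipschitz in $\|\cdot\|_\textnormal{op}$, so $\ell\in\FCal_\textnormal{op}(1,1)$. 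Because $\mathsf O$ is local, $\mathsf O_{F_{s,\theta}}(X)=\mathsf O_\ell(X)$, and $\mathsf{B}_\theta$ can compute this once it knows $(a^\ast,s_{a^\ast})$. It therefore suffices to design each string query so that its reply reveals the maximizing piece and its sign.

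\textbf{Query design.} $\mathsf{B}_\theta$ keeps the set $\mathcal K$ of indices whose signs are already known. Given $\mathsf{A}$'s next query $X$, it lists the unknown indices $a\notin\mathcal K$ in decreasing order of the optimistic value $h_a:=\theta_a+|\langle A_a,X\rangle|$. With each index it pairs the \emph{pessimistic} guess $g_a=-\operatorname{sign}\langle A_a,X\rangle$, which asserts that piece $a$ takes its low value $\theta_a-|\langle A_a,X\rangle|$. Now read off the reply.
\begin{itemize}
\item If the reply is a first mismatch $i$, then the pieces before position $i$ are low and now known, and piece $a_i$ takes its high value $h_{a_i}$. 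Every later unknown index $a$ has value at most $h_a\le h_{a_i}$. Hence the maximum is the largest among $h_{a_i}$, the known values on $\mathcal K$, and the low values of $a_1,\dots,a_{i-1}$, all of which are now known.
\item If the reply is \textsc{equal}, every unknown piece is low, and again every value is known.
\end{itemize}
In either case the reply determines the argmax and its sign, hence the oracle answer, and $\mathsf{B}_\theta$ adds the newly revealed indices to $\mathcal K$. An index with $\langle A_a,X\rangle=0$ has the same value whatever its sign, so the choice of guess for it is harmless. By induction on the number of queries, the string responses determine all queries, oracle replies and the output $\widehat X$ of $\mathsf{A}$, and $\mathsf{B}_\theta$ makes at most $\mathsf{T}_\mathsf{A}(F_{s,\theta},\delta_0)$ queries.

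\textbf{Genericity of $\theta$.} Uniqueness of the argmax is where the ``almost every $\theta$'' enters, and I expect it to be the main technical point. Because $\mathsf{A}$ is deterministic and each string reply lies in the finite set $[M]\cup\{\textsc{equal}\}$, the possible histories form a countable tree. Along each history, the query point $X$ is determined by the previous replies; a priori it also depends on $\theta$, and this dependence needs care. The event that two candidate values $\pm\langle A_a,X\rangle+\theta_a$ and $\pm\langle A_b,X\rangle+\theta_b$ with $a\ne b$ tie is then a single linear equation in $(\theta_a,\theta_b)$. I would handle the $\theta$-dependence of $X$ by conditioning on all coordinates of $\theta$ except one involved coordinate and applying Fubini, so that the tie set is Lebesgue-null. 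A countable union of such sets is null, and the hypothesis $\bp(\mathsf{T}_\mathsf{A}<\infty)=1$ ensures that only finite histories matter.

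\textbf{Output and error.} When $\mathsf{A}$ stops with $\widehat X$, $\mathsf{B}_\theta$ outputs $\widehat S_a:=-\operatorname{sign}\langle A_a,\widehat X\rangle$ without making another query. Suppose $s\in\GCal_{A,4\tau+\delta_0}$. The witness point in $\XCal_\textnormal{op}(1)$ gives $\min_{\XCal_\textnormal{op}(1)}F_{s,\theta}\le-(4\tau+\delta_0)+\tau$. Since $\widehat X$ is $\delta_0$-optimal, $F_{s,\theta}(\widehat X)\le-3\tau$. Thus $s_a\langle A_a,\widehat X\rangle\le-3\tau-\theta_a<0$ for every $a$, which forces $\widehat S_a=s_a$. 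Therefore $\bp\{\widehat S\ne S\}\le\bp\{S\notin\GCal_{A,4\tau+\delta_0}\}=1-|\GCal_{A,4\tau+\delta_0}|/2^M$, which is the claimed bound.
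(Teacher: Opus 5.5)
Your query design (pessimistic guesses ordered by the optimistic values $h_a$), the locality-based simulation, and the sign decoding with its error bound all match the paper's reduction. The gap is the genericity step, and your simulation explicitly rests on it. Uniqueness of the argmax at every query does \emph{not} hold for almost every $\theta$. The queries depend on $\theta$ through the returned function values, so an adaptive $\mathsf{A}$ can manufacture ties among pieces it has already identified.

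For example, take $m,n\ge2$, $M=2$, $A_1=e_1e_1^\top$, $A_2=e_2e_2^\top$. Whenever $\theta_1\ne\theta_2$, the query $X^1=0$ reveals the active index $a$, its sign (from the subgradient $s_aA_a$) and $\theta_a$ (from the value). The query $X^2=-(\tau+1)s_aA_a$ makes the other index $b$ the unique active one and reveals $s_b$ and $\theta_b$. Then $X^3=\theta_b s_aA_a+\theta_a s_bA_b$ gives both pieces the value $\theta_a+\theta_b$. So the argmax at the third query is $\{1,2\}$ for almost every $\theta$, not just on a null set. Your Fubini argument fails exactly here: after freezing all coordinates but one, $X^3$ still moves with the remaining coordinate, and it moves so as to keep the tie.

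The missing idea is that genericity can only be claimed for indices that have \emph{never been active} at an earlier query. The paper therefore uses a weaker condition: at each query at most one such new index is active. In symbols, $|I^t_{s,\theta}\setminus J^{t-1}_{s,\theta}|\le 1$, where $I^t_{s,\theta}$ is the active set at the $t$-th query and $J^{t-1}_{s,\theta}$ is the union of the earlier active sets.

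The simulation must then tolerate ties among already revealed pieces. Answer with $\mathsf{O}_{F_{\bar s,\theta}}(X)$, where $\bar s$ equals $s$ on revealed indices and the pessimistic guess elsewhere. First check, using the ordering by $h_a$ and the weak condition, that every unrevealed index lies strictly below the maximum. Then $F_{\bar s,\theta}=F_{s,\theta}$ near $X$, and locality applies.

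The weak condition itself needs an argument your sketch lacks, by induction on $t$. Suppose $a$ was inactive at all earlier queries. Then for $\theta_a$ in a small interval, $F_{s,\theta}$ is unchanged near each earlier query point. By locality the earlier replies, and hence $X^t$, are locally constant in $\theta_a$. A tie at $X^t$ between two new indices $a$ and $b$ then forces $\theta_a=\theta_b+s_b\langle A_b,X^t\rangle-s_a\langle A_a,X^t\rangle$ locally. So for each fixed value of the other coordinates, the bad set of $\theta_a$ consists of isolated points, hence is countable and has measure zero.
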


\begin{proof}
For $a \in [M]$, $s \in \{\pm1\}^M$, and $\theta \in [0,\tau]^M$, we define the function $F_{s,\theta}(X)=\max_{a\in[M]}\phi_a^{s,\theta}(X)$ where $\phi_a^{s,\theta}(X):=s_a\langle A_a,X\rangle+\theta_a$. Accordingly, we define $I_{s,\theta}(X):= \{a \in [M]: \phi_a^{s,\theta}(X)=F_{s,\theta}(X)\}$. Given any algorithm $\mathsf{A} \in \ACal(\mathsf{O})$, we define $I^t_{s,\theta}$ as the index of the affine function for its $t^\textnormal{th}$ query $X^t_{s,\theta}$, i.e., $I^t_{s,\theta}:=I_{s,\theta}(X^t_{s,\theta})$. If $\mathsf{A}$ stops before $t^\textnormal{th}$ iteration, $I^t_{s,\theta}:=\emptyset$. We also define $J^t_{s,\theta}=\bigcup_{q=1}^t I^q_{s,\theta}$ with $J^0=\emptyset$. 

In what follows, we define $\ECal:=\{\theta \in [0,\tau]^M: |I^t_{s,\theta}\setminus J^{t-1}_{s,\theta}| \leq 1, \mathsf{T}_{\mathsf{A}}(F_{s,\theta},\delta_0)<+\infty, \forall s\in\{\pm1\}^M, \forall t \geq 1\}$ and show that (i) for $\forall \theta \in \ECal$, there exists a string guessing algorithm $\mathsf{B}_\theta$ with an algorithm $\mathsf{A}'$ such that the string guessing oracle responses that $\mathsf{B}_\theta$ receives produces the oracle responses that $\mathsf{A}'$ receives. $\mathsf{A}'$ and $\mathsf{A}$ are based on different oracles but have the same oracle responses if we run $\mathsf{A}$ on $F_{s,\theta}$ with oracle $\mathsf{O}$ for every $s \in \{\pm1\}^M$; (ii) $\TCal_{\mathsf{B}_\theta}(s) \leq \mathsf{T}_{\mathsf{A}}(F_{s,\theta},\delta_0)$ and $\bp\{\widehat{S}(\mathsf{B}_\theta,S) \neq S\}\leq 1-|\GCal_{A,4\tau+\delta_0}|/2^M$ for every $s \in \{\pm1\}^M$; and (iii) $\bp(\ECal)=1$. For $z \in \br$, we define $\operatorname{sgn}_0(z)=1$ if $z \geq 0$ and $\operatorname{sgn}_0(z)=-1$ otherwise. 

\begin{figure}[!t]
\centering
\begin{tikzpicture}[
    font=\small, >={Latex[length=2mm,width=1.4mm]}, 
    flow/.style={
        ->,
        line width=.55pt, 
        rounded corners=1pt
    },
    init/.style={
        draw,
        rounded corners=2pt,
        align=center,
        inner xsep=6pt,
        inner ysep=4pt,
        text width=.40\linewidth
    },
    decision/.style={
        draw,
        rounded corners=2pt,
        align=center,
        inner xsep=6pt,
        inner ysep=5pt,
        text width=.40\linewidth
    },
    process/.style={
        draw,
        rounded corners=2pt,
        align=center,
        inner xsep=7pt,
        inner ysep=6pt,
        text width=.59\linewidth
    },
    stopbox/.style={
        draw,
        rounded corners=2pt,
        align=center,
        inner xsep=5pt,
        inner ysep=5pt,
        text width=.26\linewidth
    },
    oracle/.style={
        draw,
        double,
        double distance=.7pt,
        rounded corners=2pt,
        align=center,
        inner xsep=5pt,
        inner ysep=6pt,
        text width=.19\linewidth
    },
    edge label/.style={
        fill=white,
        inner sep=1.2pt,
        font=\footnotesize
    }
]

\node[init] (init)
    {Initialize the state:
    $\Pi^0\gets\emptyset$ and $D^0\gets\emptyset$};

\node[decision, below=4.5mm of init] (decide)
    {Run the next decision of $\mathsf{A}$
    from $\Pi^{t-1}$};

\node[stopbox, right=17mm of decide] (stop)
    {\textbf{Stop and decode}\\[-.2ex]
    {\footnotesize
        $\widehat{s}_a = -\operatorname{sgn}_0(\langle A_a,\widehat{X}\rangle)$,\\[-.2ex]$a \in [M]$.
    }
    };

\node[process, below=7mm of decide] (prepare)
    {\textbf{1. Prepare the ordered low guesses.}\\[-.2ex]
    For every $a \notin D^{t-1}$, compute 
    $z_a^t=\langle A_a,X^t_{s,\theta}\rangle$, 
    $\ell_a^t=-\operatorname{sgn}_0(z_a^t)$, 
    $h_a^t=\theta_a+|z_a^t|$.
    Order $\{b_1^t,\ldots,b_{p^t}^t\} = [M]\setminus D^{t-1}$ by non-increasing $h_{b_i^t}^t$,breaking ties by index.
    };

\node[process, below=4.5mm of prepare] (reveal)
    {\textbf{2. Query the string guessing oracle.}\\
    If $D^{t-1}\neq[M]$, submit the ordered guesses
    $((b_i^t,\ell_{b_i^t}^t))_{i=1}^{p^t}$.
    Append the revealed signs to $D^{t-1}$ to get $D^{t}$.
    };

\node[oracle, right=16.5mm of reveal] (sgo)
    {string guessing\\oracle hiding $s$};

\node[process, below=4.5mm of reveal] (reply)
    {\textbf{3. Synthesize the virtual first-order reply.}
    Set $\bar{s}_a^t=s_a$ on $D^{t}$ and
    $\bar{s}_a^t=\ell_a^t$ off $D^{t}$.
    Return $V^t=\mathsf{O}_{F_{\bar{s}^t,\theta}}(X^t_{s,\theta})$ to $\mathsf{A}$ and
    add $(X^t_{s,\theta},V^t)$ to $\Pi^{t-1}$ to get $\Pi^t$.
    };

\draw[flow] (init) -- (decide);

\draw[flow]
    (decide.south) --
    node[edge label, right]
    {$(\textnormal{\textsc{query}},X^t_{s,\theta})$}(prepare.north);

\draw[flow]
    (decide.east) --
    node[edge label, above]
    {$(\textnormal{\textsc{stop}},\widehat{X})$}(stop.west);

\draw[flow] (prepare) -- (reveal);
\draw[flow] (reveal) -- (reply);

\draw[flow]
    ([yshift=2.6pt]reveal.east) --
    node[edge label, above] {query}([yshift=2.6pt]sgo.west);

\draw[flow]
    ([yshift=-2.6pt]sgo.west) --
    node[edge label, below, xshift=.7mm] {\textsc{equal} or $j$}([yshift=-2.6pt]reveal.east);

\draw[flow]
    (reply.west) -- ++(-4mm,0) |-
    node[edge label, pos=.18, above] {next round}(decide.west);
\end{tikzpicture}

\caption{Reduction of a convex nonsmooth optimization algorithm $\mathsf{A}$ to a string-guessing algorithm $\mathsf{B}$}
\label{fig:string-guessing}
\end{figure}

We prove (i) by characterizing $\mathsf{B}_\theta$ and $\mathsf{A}'$ as shown in Figure~\ref{fig:string-guessing} and showing the desired properties. Indeed, we denote by $D^{t-1} \subseteq  [M]$ the indices of the signs that are known to $\mathsf{B}_\theta$ before $\mathsf{A}'$ makes its $t^\textnormal{th}$ query with $D^0=\emptyset$. For $a \notin D^{t-1}$ and a query $X^t_{s,\theta}$, we define $z_a^t:=\langle A_a,X^t_{s,\theta}\rangle$, the sign of the lower sign and higher value $\ell_a^t=-\operatorname{sgn}_0(z_a^t)$ and $h_a^t=\theta_a+|z_a^t|$. Suppose that the $t^\textnormal{th}$ decision is to query $X^t_{s,\theta}$. Then, we order the unknown indices as $(b_1^t, \ldots, b_{M-|D^{t-1}|}^t)$ according to non-increasing values $h_{b_i^t}^t$. If $|D^{t-1}|=M$, we set $D^{t}=D^{t-1}$. If $|D^{t-1}|<M$, $\mathsf{B}_\theta$ queries the string guessing oracle with $((b_i^t,\ell_{b_i^t}^t))_{i=1}^{M-|D^{t-1}|}$. A response \textnormal{\textsc{equal}} gives $s_{b_i^t}=\ell_{b_i^t}^t$ for every $i \in [M-|D^{t-1}|]$ and $D^{t}=[M]$. A first mismatch at position $j_t$ gives $s_{b_i^t}=\ell_{b_i^t}^t$ for $i<j_t$ and $s_{b_{j_t}^t}=-\ell_{b_{j_t}^t}^t$. Finally, we update $D^{t}:=D^{t-1}\cup\{b_1^t,\ldots,b_{j_t}^t\}$ and define $V^t:=\mathsf{O}_{F_{\bar{s}^t,\theta}}(X^t_{s,\theta})$ where
\begin{equation*}
\bar{s}_a^t:= \begin{cases}
s_a, & \textnormal{if } a\in D^t,\\ \ell_a^t, & \textnormal{otherwise},
\end{cases}
\end{equation*}
We feed $V^t$ to $\mathsf{A}'$ and $(X^t_{s,\theta},V^t)$ to an ordered list $\Pi^{t-1}$ to get $\Pi^t$ that stores previous queries and oracle responses with $\Pi^{0}=\emptyset$. If the $t^\textnormal{th}$ decision is to output $\widehat{X}$, we output $\widehat{s}_a:=-\operatorname{sgn}_0(\langle A_a,\widehat{X}\rangle)$ for $a \in [M]$. Since $\mathsf{A}'$ receives $V^t$, we proceed to the $(t+1)^\textnormal{th}$ decision of $\mathsf{A}'$ to query $X^{t+1}$ or stop with some $\widehat{X}$. 

By definition, $\mathsf{A}'$ and $\mathsf{A}$ are based on different oracles. Then, we show that they have the same oracle responses if we run $\mathsf{A}$ on $F_{s,\theta}$ with oracle $\mathsf{O}$ for every $s \in \{\pm1\}^M$. It suffices to show that $J^t_{s,\theta} \subseteq D^t$ and $\Pi^t$ coincides with the queries and oracle responses that running $\mathsf{A}$ on $F_{s,\theta}$ with oracle $\mathsf{O}$ would receive. We first prove $J^t_{s,\theta} \subseteq D^t$ by induction. Indeed, $J^0 \subseteq D^0$ since $J^0=D^0=\emptyset$. If $D^{t-1}=[M]$ or the oracle replies \textnormal{\textsc{equal}}, we have $J^t_{s,\theta} \subseteq D^t=[M]$. Thus, we consider the case when the oracle replies with a mismatch $b:=b_{j_t}^t$. By definition, $s_b=-\ell_{b}^t$ and $\phi_b^{s,\theta}(X^t_{s,\theta})=h_b^t$. We claim that $I^t_{s,\theta} \subseteq D^t$. Indeed, for any $a \notin D^t$, we have $h_a^t \leq h_b^t$. By definition, we have $\phi_a^{s,\theta}(X^t_{s,\theta}) \leq h_a^t$. If $\phi_b^{s,\theta}(X^t_{s,\theta})<F_{s,\theta}(X^t_{s,\theta})$, we have $h_b^t<F_{s,\theta}(X^t_{s,\theta})$. Thus, $\phi_a^{s,\theta}(X^t_{s,\theta}) \leq h_a^t \leq h_b^t<F_{s,\theta}(X^t_{s,\theta})$ which implies $a \notin I^t_{s,\theta}$. If $\phi_b^{s,\theta}(X^t_{s,\theta})=F_{s,\theta}(X^t_{s,\theta})$, we have $b\in I^t_{s,\theta}\setminus J^{t-1}_{s,\theta}$ since $J^{t-1}_{s,\theta} \subseteq D^{t-1}$. Since $\theta \in \ECal$, we have $|I^t_{s,\theta} \setminus J^{t-1}_{s,\theta}|\leq 1$ which implies $a\notin I^t_{s,\theta}$. Since $J^t_{s,\theta}=J^{t-1}_{s,\theta}\cup I^t_{s,\theta}$, we have $J^t_{s,\theta} \subseteq D^t$. We then prove that $\Pi^t$ coincides with the queries and oracle responses that running $\mathsf{A}$ on $F_{s,\theta}$ with oracle $\mathsf{O}$ would receive by induction. Indeed, we have
\begin{equation*}
\phi_a^{\bar{s}^t,\theta}(X^t_{s,\theta}) = \theta_a-|z_a^t| \leq \phi_a^{s,\theta}(X^t_{s,\theta})<F_{s,\theta}(X^t_{s,\theta}), \textnormal{ for any } a \notin D^t.
\end{equation*}
which implies that $F_{s,\theta}=F_{\bar{s}^t,\theta}$ on the neighborhood of $X^t_{s,\theta}$ and $\mathsf{O}_{F_{s,\theta}}(X^t_{s,\theta})=\mathsf{O}_{F_{\bar{s}^t,\theta}}(X^t_{s,\theta})=V^t$.

We prove (ii). By definition, $\mathsf{B}_\theta$ makes at most one query per query of $\mathsf{A}$. Combining it with $\theta \in \ECal$ yields that $\TCal_{\mathsf{B}_\theta}(s) \leq \mathsf{T}_{\mathsf{A}}(F_{s,\theta},\delta_0) < +\infty$ for all $s \in \{\pm1\}^M$. We fix $s \in \GCal_{A,4\tau+\delta_0}$ and choose $X_s \in \XCal_\textnormal{op}(1)$ from the definition of $\GCal_{A,4\tau+\delta_0}$. Since $\theta_a \leq \tau$, we have $F_{s,\theta}(X_s) \leq -3\tau-\delta_0$. If $\widehat{X}$ is the $\delta_0$-optimal point output by $\mathsf{A}$, we have $F_{s,\theta}(\widehat{X}) \leq -3\tau < 0$. Since $\theta_a \geq 0$, this implies $s_a\langle A_a, \widehat{X}\rangle < 0$ for every $a \in [M]$, and $\widehat{s}_a=-\operatorname{sgn}_0(\langle A_a,\widehat{X}\rangle) = s_a$. Therefore, $\mathsf{B}_\theta$ recovers every $s \in \GCal_{A,4\tau+\delta_0}$, and we have $\bp\{\widehat{S}(\mathsf{B}_\theta,S) \neq S\} \leq 1-|\GCal_{A,4\tau+\delta_0}|/2^M$. 

We prove (iii). Since $\bp(\mathsf{T}_{\mathsf{A}}(F_{S,\Theta},\delta_0)<+\infty)=1$ and $S \in \{\pm1\}^M$ is a finite-valued random variable, $\mathsf{T}_{\mathsf{A}}(F_{s,\theta},\delta_0)<+\infty$ for any $s \in \{\pm1\}^M$ and almost every $\theta$. It suffices to show that $\bp\{|I^t_{s,\Theta}\setminus J^{t-1}_{s,\Theta}|\leq 1 \textnormal{ for all } t \geq 1\}=1$ for any $s\in\{\pm1\}^M$. 

Fixing $t \geq 1$, $s \in \{\pm1\}^M$ and $a \neq b$, we define $\Theta_{-a}:=(\Theta_1,\ldots,\Theta_{a-1},\Theta_{a+1},\ldots,\Theta_M)$ and claim that $\bp(a,b\in (I^t_{s,\Theta} \setminus J^{t-1}_{s,\Theta}) \mid \Theta_{-a})=0$. For $u \in [0,\tau]$, we let $\theta(u)\in[0,\tau]^M$ satisfy $\theta_a(u)=u$ and $\theta_{-a}(u)=\Theta_{-a}$, $X^q(u):=X^q_{s,\theta(u)}$ and define $E=\{u \in [0,\tau]: a,b\in I^t_{s,\theta(u)} \setminus J^{t-1}_{s,\theta(u)}\}$. In what follows, we show that $E$ is countable. Indeed, by induction over $t$, we show that $X^t(u)$ is locally constant for $u\in E$. Indeed, suppose that $X^q(u')=X^q(u)$ for every $u'$ in a neighborhood of $u$ for some $q<t$. Since $a \notin J^{t-1}_{s,\theta(u)}$, we have $\phi^{s,\theta(u)}_a(X^q(u))<F_{s,\theta(u)}(X^q(u))$ for every $q<t$. This together with the continuity implies that, after shrinking this neighborhood if necessary, $\phi^{s,\theta(u')}_a(X)<\max_{c\neq a}\phi^{s,\theta(u')}_c(X)$ for every $u'$ near $u$ and every $X$ in a neighborhood of $X^q(u)$. For every $c \neq a$, we have $\phi^{s,\theta(u')}_c=\phi^{s,\theta(u)}_c$ since $\theta_c(u')=\theta_c(u)=\Theta_c$. This implies $F_{s,\theta(u')}=F_{s,\theta(u)}$ on the neighborhood of $X^q(u)$. By locality of $\mathsf{O}$, the oracle response at $X^q(u')=X^q(u)$ remains unchanged, and thus the $(q+1)^\textnormal{th}$ query also remains unchanged. Since the first query is independent of $u$, this induction shows that $X^t(\cdot)$ is constant on a neighborhood of $u$. This yields the desired result and further implies that $\Theta_b+s_b\langle A_b,X^t(\cdot)\rangle-s_a\langle A_a,X^t(\cdot)\rangle$ is constant on this neighborhood. If $u\in E$, $a,b\in I^t_{s,\theta(u)}\setminus J^{t-1}_{s,\theta(u)}$ implies $u=\Theta_b+s_b\langle A_b,X^t(u)\rangle-s_a\langle A_a,X^t(u)\rangle$. Thus, at most one point of $E$ satisfies this equality in this neighborhood. Putting these pieces together yields that $E$ is countable. Since $\Theta_a$ is conditionally uniform on $[0,\tau]$, we obtain $\bp(\Theta_a\in E\mid\Theta_{-a})=0$, which proves $\bp(a,b\in I^t_{s,\Theta}\setminus J^{t-1}_{s,\Theta}\mid\Theta_{-a})=0$.

Finally, we have $\bp(a,b\in (I^t_{s,\Theta}\setminus J^{t-1}_{s,\Theta}))=0$ for any $s\in\{\pm1\}^M$. This implies that $\bp\{|I^t_{s,\Theta}\setminus J^{t-1}_{s,\Theta}|\leq 1 \textnormal{ for all } t \geq 1\}=1$ for any $s\in\{\pm1\}^M$ and completes the proof.  
\end{proof}
 
\begin{proof}[Proof of Theorem~\ref{thm:oracle-complexity}]
Eq.~\eqref{eq:oracle-complexity-order} and Theorem~\ref{thm:magd-convex-finite-time-rate} guarantee $\mathfrak{m}_\textnormal{dc} \leq \mathfrak{m}_\textnormal{wcc}=O(\min\{m,n\}(\frac{GR}{\epsilon})^2)$. Thus, it suffices to prove $\mathfrak{m}_\textnormal{dc}(\XCal_\textnormal{op}(R),\FCal_\textnormal{op}(G,R),\mathsf{O},\epsilon)=\Omega(\min\{m,n\}(\tfrac{GR}{\epsilon})^2)$. 

To apply Proposition~\ref{prop:lb-affine-reduction}, we claim that, for any $\delta \in (\frac{1}{\sqrt{\max\{m,n\}}},1)$, there exists a universal constant $c>0$ such that there exist $\tau>0$, $M \geq \max\{1,c\min\{m,n\}\delta^{-2}\}$, and $A=(A_a)_{a \in [M]}  \in (\br^{m\times n})^M$ with $\|A_a\|_\textnormal{nuc} \leq 1$ and $1-|\GCal_{A,4\tau+\delta}|/2^M \leq \frac{1}{8}$. Indeed, the claim implies 
\begin{equation}
\label{eq:GR11}
\mathfrak{m}_\textnormal{dc}(\XCal_\textnormal{op}(1),\FCal_\textnormal{op}(1,1),{\mathsf{O}},\delta)= \Omega(\min\{m,n\}\delta^{-2})
\end{equation}
for any local oracle ${\mathsf{O}}$ on $\FCal_\textnormal{op}(1,1)$ and any $\delta\in(\frac{1}{\sqrt{\max\{m,n\}}},1)$ using the hard function distribution $F_{S,\Theta}$ over $\FCal_\textnormal{op}(1,1)$ defined in Proposition~\ref{prop:lb-affine-reduction}. For any $\mathsf{A} \in \ACal({\mathsf{O}})$, if $\mathsf{T}_{\mathsf{A}}(F_{S,\Theta},\delta)=+\infty$ with any positive probability, $\EE[\mathsf{T}_{\mathsf{A}}(F_{S,\Theta},\delta)]=+\infty$ and the lower bound holds. Otherwise, Proposition~\ref{prop:lb-affine-reduction} gives, for almost every $\theta$, a string guessing algorithm $\mathsf{B}_\theta$ reproducing $\mathsf{A}$'s queries and responses and satisfying $\mathcal T_{\mathsf{B}_\theta}(s) \leq \mathsf{T}_{\mathsf{A}}(F_{s,\theta},\delta)$ for all $s \in \{\pm 1\}^M$ and $\bp\{\widehat{S}(\mathsf{B}_\theta,S)\neq S\} \leq \frac{1}{8}$, and Lemma~\ref{lem:string-guessing} gives $\EE_S[\TCal_{\mathsf{B}_\theta}(S)] \geq \frac{1}{2}(\frac{7}{8}M-\HH(\frac{1}{8}))$. Taking expectation over $\Theta$ yields $\EE[\mathsf{T}_{\mathsf{A}}(F_{S,\Theta},\delta)] \geq (\frac{7}{16}-\frac{1}{2}\HH(\tfrac{1}{8}))c\min\{m,n\}\delta^{-2}$. This holds for any $\mathsf A\in\mathcal A({\mathsf O})$, proving Eq.~\eqref{eq:GR11}.

In what follow, we prove the desired claim. Indeed, without loss of generality, we assume $m\leq n$. We derive a key probability inequality to separate low and high accuracy regimes. Let $e_i^{(d)}$ denote the $i^{\rm th}$ standard basis vector in $\br^d$. 
The uniform distribution on the $\br^n$ sphere with radius $\sqrt n$ is isotropic and has a uniformly bounded sub-gaussian norm \citep[Theorem~3.4.6]{Vershynin-2018-High}.
Hence, \citep[Theorem~4.6.1]{Vershynin-2018-High} gives an absolute constant $C>0$ such that every random $m\times n$ matrix $B$ with independent rows uniform on the unit sphere satisfies
\begin{equation}\label{eq:lb-spherical-row-bound}
\bp\{\|B\|_\textnormal{op}\le\sqrt{m/n}+2C\} \geq 1-2e^{-n}.
\end{equation}
We set $c=(8c_0^2)^{-1}$ where $c_0=1+2C$ and consider two cases. First, we assume $\delta \geq 1/(2c_0)$. By setting $M:=m$, $A_i:=e_i^{(m)}(e_i^{(n)})^\top$ and $X_s:=-\sum_{i=1}^m s_iA_i$ for $\forall s\in\{\pm1\}^m$, we have $\|A_i\|_\textnormal{nuc}=1$, $\|X_s\|_\textnormal{op}=1$ and $s_i\langle A_i,X_s\rangle=-1$. By setting $\tau:=(1-\delta)/4$, we have $\GCal_{A,4\tau+\delta}=\{\pm1\}^M$ and $M \geq cm\delta^{-2}$. Second, we assume $1/\sqrt{n}<\delta<1/(2c_0)$. By setting $k=\lfloor(2c_0\delta)^{-2}\rfloor$, we have $k<n$, $n>4c_0^2 \geq 4$ and $1-2e^{-n} \geq \frac{7}{8}$. The existence of $A_i$ can be established by the following argument. Indeed, letting $Q_1,\ldots,Q_m\in \br^{n\times n}$ be independent orthogonal matrices distributed uniformly with respect to the Haar measure, we set $u_{ij}:=Q_ie_j^{(n)}$, $A_{ij}:=e_i^{(m)}u_{ij}^\top$, and $B_s:=\tfrac{1}{\sqrt{k}}\sum_{i=1}^m\sum_{j=1}^k s_{ij}A_{ij}$. For any given $s \in \{\pm1\}^{m\times k}$, the $i^\textnormal{th}$ row of $B_s$ is $(Q_iw_i)^\top$, where $w_i:=k^{-1/2}\sum_{j=1}^k s_{ij}e_j^{(n)}$ is a unit vector. Thus, the rows of $B_s$ are independent and uniform on the unit sphere. Since $\sqrt{m/n}+2C \leq c_0$, Eq.~\eqref{eq:lb-spherical-row-bound} guarantees $\bp_Q\{\|B_s\|_\textnormal{op}\le c_0\} \geq \frac{7}{8}$ which implies $\EE_Q[2^{-mk}|\{s \in \{\pm1\}^{m\times k}: \|B_s\|_\textnormal{op} \leq c_0\}|] \geq \frac{7}{8}$. Therefore, some deterministic realization of $Q_1,\ldots,Q_m$ makes the quantity inside the expectation at least $\frac{7}{8}$. We fix such a deterministic $Q_1,\dots,Q_m$ and let $M=mk$. Then, we define $X_s:=-\frac{B_s}{c_0}$ where
\begin{equation*}
\GCal:=\{s\in\{\pm1\}^{m\times k}:\|B_s\|_\textnormal{op} \leq c_0\}.
\end{equation*}
Thus, $|\GCal|/2^M \geq \frac{7}{8}$ and $\|X_s\|_\textnormal{op} \leq 1$ for all $s \in \GCal$. In addition, we have $\|A_{ij}\|_\textnormal{nuc}=1$ and $\langle A_{ij},A_{\ell q}\rangle =\langle e_i^{(m)},e_\ell^{(m)}\rangle\langle u_{ij},u_{\ell q}\rangle =\mathbf{1}_{\{i=\ell,j=q\}}$. As a consequence, $s_{ij}\langle A_{ij},X_s\rangle = -\frac{1}{c_0\sqrt{k}}$. Letting $\tau:=\frac{1}{4c_0\sqrt k}-\frac{1}{4}\delta$, we obtain that any $s \in \GCal$ belongs to $\GCal_{A,4\tau+\delta}$ which implies $|\GCal_{A,4\tau+\delta}| \geq |\GCal| \geq \frac{7}{8} \cdot 2^M$. Moreover, we have $M = mk \geq cm\delta^{-2}$. Mapping $\{\pm1\}^{m\times k}$ to $\{\pm1\}^M$ yields the claim. Therefore, we conclude that 
\begin{equation*}
\mathfrak{m}_\textnormal{dc}(\XCal_\textnormal{op}(1),\FCal_\textnormal{op}(1,1),{\mathsf{O}},\delta)= \Omega(\min\{m,n\}\delta^{-2})
\end{equation*}
for any local oracle ${\mathsf{O}}$ on $\FCal_\textnormal{op}(1,1)$ and any $\delta\in(\frac{1}{\sqrt{\max\{m,n\}}},1)$. 

It suffices to prove that, for any local oracle $\mathsf{O}$ on $\FCal_\textnormal{op}(G,R)$ there exists a local oracle $\widetilde{\mathsf{O}}$ on $\FCal_\textnormal{op}(1,1)$ such that for any $\epsilon>0$, we have $\mathfrak{m}_\textnormal{dc}(\XCal_\textnormal{op}(R),\FCal_\textnormal{op}(G,R),\mathsf{O},\epsilon)\geq \mathfrak{m}_\textnormal{dc}(\XCal_\textnormal{op}(1),\FCal_\textnormal{op}(1,1),\widetilde{\mathsf{O}},\epsilon/GR)$. Indeed, we define $\mathscr{S}_{G,R}:\FCal_\textnormal{op}(1,1)\to\FCal_\textnormal{op}(G,R)$, $\mathscr{S}_{G,R}(F)(W):=GRF(W/R)$. This is a bijection and guarantees that $X \mapsto RX$ maps $\XCal_\textnormal{op}(1)$ bijectively onto $\XCal_\textnormal{op}(R)$ and $\widehat{X}$ is $(\epsilon/GR)$-optimal for $F$ if and only if $R\widehat{X}$ is $\epsilon$-optimal for $\mathscr{S}_{G,R}(F)$. Given a local first-order oracle $\mathsf{O}$ on $\FCal_\textnormal{op}(G,R)$, we define
\begin{equation*}
\widetilde{\mathsf{O}}_F(X) = \left(\tfrac{v}{GR},\tfrac{g}{G}\right), \textnormal{ where }(v,g)=\mathsf{O}_{\mathscr{S}_{G,R}(F)}(RX).
\end{equation*}
Since $g \in \partial\mathscr{S}_{G,R}(F)(RX)=G\cdot\partial F(X)$, we obtain that $\widetilde{\mathsf{O}}$ is the first-order oracle that locality follows from $\mathsf{O}$. Given any $\mathsf{A} \in \ACal(\mathsf{O})$, we construct $\widetilde{\mathsf{A}} \in \ACal(\widetilde{\mathsf{O}})$ that simulates $\mathsf{A}$ by mapping every query $W$ to $W/R$, rescaling the resulting oracle reply by $(GR,G)$, and mapping the terminal output $\widehat{W}$ to $\widehat{W}/R$. Thus, for every $F \in \FCal_\textnormal{op}(1,1)$, $\mathsf{T}_{\widetilde{\mathsf{A}}}(F,\epsilon/GR)=\mathsf{T}_{\mathsf{A}}(\mathscr{S}_{G,R}(F),\epsilon)$. Taking the infimum over algorithms and the supremum over distributions proves the desired result. 
\end{proof}

\section{Experiments}\label{sec:exp}
We conduct extensive experiments with nonsmooth hard instance, quadratic optimization, convolutional neural networks (CNNs) and LLM pretraining. The candidate approaches include SGD, inexact Muon (with either Newton-Schulz or PolarExpress) and MAGD (with either Newton-Schulz or PolarExpress). In each experiment, we tune the hyperparameters by grid search for all methods and also keep the step size and momentum schedules same for Muon and the Muon branch of MAGD. 

\subsection{Nonsmooth hard instance}
We consider a differentiable approximation of hard instances from Theorem~\ref{thm:fixed-momentum-failure} parameterized by $\delta \geq 0$. For $\delta=0$, they reduce to the original instances. For $\delta>0$, letting $p=W_{11}+W_{22}$ and
$q=W_{11}-W_{22}$, we define
\begin{equation*}
f_\delta(W) = A_1\left(\sqrt{p^2+\delta^2}-\delta\right)+(A_2+1)\delta\log\left(\tfrac{A_2+e^{q/\delta}}{A_2+1}\right)-q.
\end{equation*}
where the first term approximates $A_1|p|$, while the second term approximates $\max\{A_2q,0\}+\max\{-q,0\}$. We use a fixed momentum factor $\beta_t\equiv\beta=0.7$, $\mu=1.2671$ from the proof of Theorem~\ref{thm:fixed-momentum-failure}, $A_1 = 0.009283$, and $A_2=63.08$. We initialize $W_{11}$ and $W_{22}$ independently from standard normal distributions and set all of other entries as $0$.

We compare gradient descent (GD), fixed-momentum Muon, adaptive-momentum Muon, and MAGD for 50000 iterations over 100 initializations, using the exact polar updates. We search over step size schedules $x_t,y_t$ as follows, 
\begin{equation*}
x_t = \tfrac{x_0}{2}\left[1+\cos\left(\tfrac{\pi t}{T-1}\right)\right], \quad y_t= \tfrac{y_0}{(1+t)^a},
\end{equation*}
where $x_0 \in \{0.003,0.005,0.008,0.01,0.015,0.02,0.03,0.05,0.08\}$, $a \in \{0.5,0.6,0.75,0.9,1\}$ and $y_0 \in \{0.1,0.2,0.3,0.5,0.8\}$. For MAGD, we additionally tune $\bar{\eta}_t$ to follow the same schedules with $x_0,y_0\in \{0.02,0.03,0.035,0.04,0.05\}$, $\rho=0.5$, and $\gamma=1$, and use normalized gradient descent for the GD branch as in Theorem~\ref{thm:magd-convex-finite-time-rate}. For every method and value of $\delta$, we select the configuration that maximizes the fraction of initializations attaining an objective gap below $10^{-6}$.

Table~\ref{tab:counterexample} shows that fixed-momentum Muon cannot reach an objective gap below $10^{-6}$ for all runs when $\delta=0$. In contrast, adaptive-momentum Muon, GD and MAGD achieve improved performance. It is also worth noting that the performance of all methods improves as the parameter $\delta$ increases.
\begin{table}[!t]
\centering
\begin{tabular}{lcccc}
\toprule
$\delta$ & Muon (fixed $\beta_t=0.7$) & Muon ($\beta_t=\max\{1-256\eta_t,0\}$) & GD & MAGD (fixed $\beta_t^{\rm{M}}=0.7$)\\
\midrule
$0$ & $0\%$ & $85\%$ & $100\%$ & $100\%$ \\
$10^{-4}$ & $55\%$ & $100\%$ & $100\%$ & $100\%$ \\
$10^{-3}$ & $100\%$ & $100\%$ & $100\%$ & $100\%$ \\
$10^{-2}$ & $100\%$ & $100\%$ & $100\%$ & $100\%$ \\
$10^{-1}$ & $100\%$ & $100\%$ & $100\%$ & $100\%$ \\
\bottomrule
\end{tabular}
\caption{Percentage of the 100 initializations for which the objective gap is below $10^{-6}$.}
\label{tab:counterexample}
\end{table}

\subsection{Quadratic optimization}
We consider the quadratic optimization problems used in~\citet{Gonon-2026-Insights}. Taking $n=100$, the convex quadratic function is defined by 
\begin{equation*}
f(W) = \tfrac{1}{2} \langle W,AW\rangle,
\end{equation*}
where $A=Q\operatorname{diag}(s_1,\ldots,s_n)Q^\top$ is positive definite and $Q$ is random and orthogonal. After vectorizing $W$, the Hessian of $f$ is $I_n\otimes A$. Each eigenvalue $s_i$ of $A$ appears $n$ times in the spectrum of $f$. In Table~\ref{tab:quadratic-spectra}, we summarize seven spectrum families, where all of them use the same endpoints, $s_{\min}=10^{-3}$ and $s_{\max}=10$. Thus, every problem has condition number $10^4$ and only the shape of the eigenvalue distribution varies. We sort the eigenvalues in decreasing order and exactly enforce the endpoints.
\begin{table}[t]
\centering
\begin{tabular}{ll}
\toprule
Spectrum & Construction \\ \midrule
\texttt{max\_spiked} & One eigenvalue at $s_{\min}$, all others at $s_{\max}$. \\
\texttt{min\_spiked} & One eigenvalue at $s_{\max}$, all others at $s_{\min}$. \\
\texttt{uniform} & $s_i$ sampled uniformly on $[s_{\min},s_{\max}]$. \\
\texttt{gaussian} & $s_i$ sampled from a clipped Gaussian centered in $[s_{\min},s_{\max}]$. \\
\texttt{linear\_decay\_to\_max} & $s_i=s_{\max}-(s_{\max}-s_{\min})\sqrt{u_i}$, $u_i\sim\mathrm{Unif}[0,1]$. \\
\texttt{geometric\_decay\_to\_max} & $s_i=\max\{s_{\min},s_{\max}\cdot 0.9^{i-1}\}$. \\
\texttt{u\_shaped} & $s_i=s_{\min}+(s_{\max}-s_{\min})z_i$, $z_i\sim\mathrm{Beta}(0.2,0.2)$. \\ \bottomrule
\end{tabular}
\caption{Spectrum families used in the quadratic optimization experiment.}
\label{tab:quadratic-spectra}
\end{table}
\begin{figure}[!t]
\centering
\includegraphics[width=\linewidth]{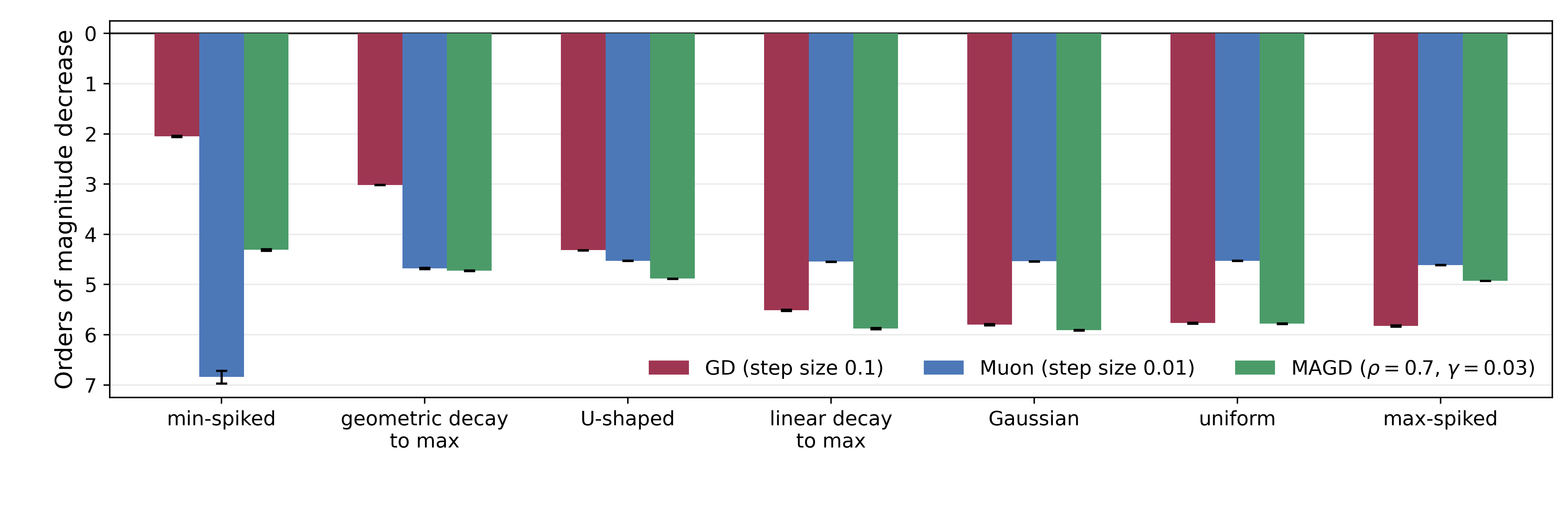} \vspace{-3em}
\caption{Orders of magnitude of loss decrease after 500 iterations on the quadratic functions.}
\label{fig:quadratic}
\end{figure}

We compare GD, Muon, and MAGD using PolarExpress~\citep{Amsel-2026-Polar} for orthogonalization. We sample $[W_0]_{ij} \sim \mathcal{N}(0,0.01)$ and run each method for $T=500$ iterations over 100 initializations on an NVIDIA 4060 Laptop GPU with 8 GB of VRAM. We then report the mean number of orders of magnitude by which the loss decreases (i.e., $\log_{10} \left(\tfrac{f(W_0)}{\min_{0 \leq t \leq T} f(W_t)}\right)$) over the random initializations, together with confidence intervals.

For hyperparameter tuning, we disable Muon momentum and perform a grid search over the constant step sizes in $\{0.1,0.01,0.001\}$ for both Muon and GD. The best step size is $0.1$ for GD and $0.01$ for Muon as observed in~\citet{Gonon-2026-Insights}. We run MAGD using these step sizes for its GD and Muon branches, respectively, with $\rho=0.7$ and $\gamma=0.03$.

We present the results in Figure~\ref{fig:quadratic}. GD is strong on several instances whose spectra are concentrated near the largest eigenvalues. Muon is robust on spectra for which GD is slow and performs extremely well on \texttt{min\_spiked}. MAGD combines these behaviors by adaptively mixing a GD branch and a Muon branch, thereby exhibiting best-of-both-worlds behavior on this benchmark. It outperforms the best of Muon and GD on five of the seven spectra. On each of the two spectra with a single outlier eigenvalue, it retains some of the gains achieved by the stronger baseline.

\subsection{Convolutional neural network}
We train a 2-million-parameter CIFARNET~\citep{Jordan-2024-CIFAR} on CIFAR-10~\citep{Krizhevsky-2009-Learning} for image classification using one NVIDIA GPU with 48 GB of VRAM. We split the training set into 40000 training samples and 10000 validation samples and evaluate the models on the test set of 10000 samples. Each run uses 200 optimization steps with a batch size of 2000, corresponding to 10 passes over the training set. We train with cross-entropy loss and label smoothing and report unsmoothed cross-entropy loss. We compare Muon, MAGD, and stochastic gradient descent with momentum (SGDM) for training the main convolutional layers, where none of them uses weight decay. All remaining trainable parameters other than the main convolutional layers are optimized by SGDM. When tuning the step size and momentum for these methods, we use 3 Newton-Schulz iterations for orthogonalization. The optimization problem is nonsmooth because the CNN uses max pooling.

We select the hyperparameters by maximizing validation accuracy averaged over 50 seeds. For Muon, we test the pair of step size and momentum factor in
\begin{align*}
&\{0.005, 0.01, 0.02, 0.05, 0.10, 0.18, 0.24, 0.30, 0.36, 0.40\} \times \\ &\{0.40, 0.50, 0.55, 0.60, 0.65, 0.70, 0.80, 0.85, 0.90, 0.95\}.
\end{align*}
For SGDM, we test step sizes in $\{0.0001, 0.0002, 0.0004, 0.0008, 0.001, 0.002, 0.004, 0.008, 0.01, 0.02\}$ with the same momentum grid. For MAGD, we retain Muon's selected step size and momentum factor and tune the MAGD-specific parameters under the same validation criterion. The selected step size and momentum factor are $0.001$ and $0.85$ for SGDM, and $0.24$ and $0.65$ for Muon. MAGD uses the selected Muon settings together with $\bar{\eta}_t=0.095$, $\gamma=0.19$, and $\rho=0.003$.

In Table~\ref{tab:cnn}, we report the validation and test losses and accuracies averaged over 100 random seeds. MAGD performs competitively with Muon, and its performance improves as the approximation becomes more accurate. Moreover, both MAGD and Muon outperform SGDM in this setting.
\begin{table}[!t]
\centering
\begin{tabular}{lcccc}
\hline
Method & Val. Loss & Val. Accuracy & Test Loss & Test Accuracy \\
\hline
MAGD (NS3) & $0.4312 \pm 0.0002$ & $0.9253 \pm 0.0002$ & $0.4271 \pm 0.0003$ & $0.9277 \pm 0.0002$ \\
Muon (NS3) & $0.4313 \pm 0.0003$ & $0.9253 \pm 0.0002$ & $0.4271 \pm 0.0003$ & $0.9279 \pm 0.0002$ \\
MAGD (NS5) & $0.4310 \pm 0.0003$ & $0.9260 \pm 0.0002$ & $0.4271 \pm 0.0002$ & $0.9282 \pm 0.0002$ \\
Muon (NS5) & $0.4310 \pm 0.0003$ & $0.9259 \pm 0.0002$ & $0.4273 \pm 0.0002$ & $0.9282 \pm 0.0001$ \\
MAGD (PolarExpress) & $0.4291 \pm 0.0002$ & $0.9266 \pm 0.0002$ & $0.4249 \pm 0.0002$ & $0.9288 \pm 0.0002$ \\
Muon (PolarExpress) & $0.4294 \pm 0.0003$ & $0.9266 \pm 0.0002$ & $0.4253 \pm 0.0003$ & $0.9286 \pm 0.0002$ \\
SGDM & $0.5337 \pm 0.0007$ & $0.8885 \pm 0.0004$ & $0.5322 \pm 0.0007$ & $0.8901 \pm 0.0003$ \\
\hline
\end{tabular}
\caption{Results are the mean $\pm$ standard error over 100 seeds for both validation and test. NS3 and NS5 mean three and five Newton-Schulz steps, respectively. PolarExpress uses five steps.} \label{tab:cnn}
\end{table}
\begin{figure}[!t]
\centering 
\includegraphics[width=1.0\linewidth]{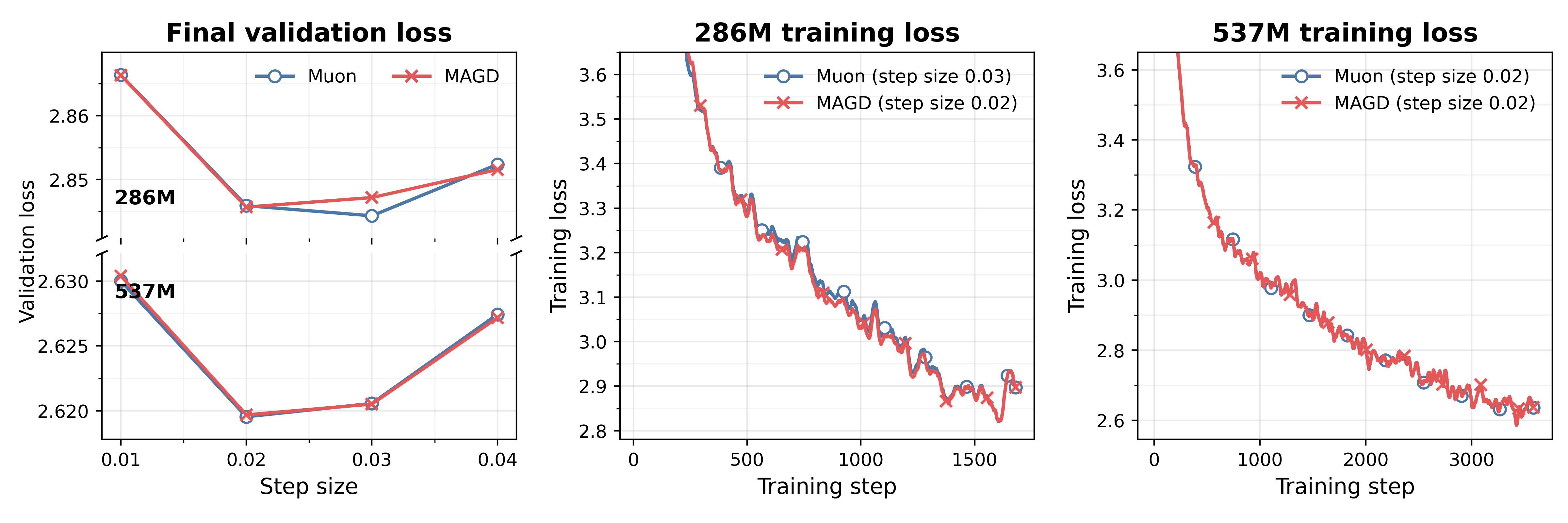}
\caption{Performance of Muon and MAGD on nanochat LLM pretraining.} \label{fig:LLM}
\end{figure}

\subsection{LLM pretraining}
We train 286M- and 537M-parameter nanochat transformers~\citep{Vaswani-2017-Attention, Karpathy-2025-Nanochat} on the NVIDIA ClimbMix dataset~\citep{Diao-2025-Nemotron} for 1680 and 3584 steps, respectively. Both settings use a vocabulary of 32768 tokens, 42M validation tokens, a global batch size of $524288=2^{19}$ tokens, and a sequence length of 2048. Each run for the 286M-parameter model uses one NVIDIA GPU with 48 GB of VRAM and 32 gradient-accumulation steps. Each run for the 537M-parameter model uses four 40 GB NVIDIA A100 GPUs  and 4 gradient-accumulation steps.

We use decoder-only transformers. Each transformer block contains self-attention with rotary position embeddings (RoPE)~\citep{Su-2024-Roformer} and $d \times 4d$ MLP layers with squared-ReLU activations. The 286M-parameter model has 12 layers, hidden dimension $d=768$, and 6 attention heads, whereas the 537M-parameter model has 16 layers, hidden dimension $d=1024$, and 8 attention heads. Both models use a head dimension of 128, learned value embeddings in alternating layers, and an untied head.

We compare Muon and MAGD for training the attention and MLP layers, using peak step sizes in $\{0.01,0.02,0.03,0.04\}$ with a warmup-stable-decay schedule. The weight decay used in Muon follows a cosine schedule, and the Muon branch of MAGD uses the same  schedule. We use five PolarExpress steps for orthogonalization. All remaining trainable parameters are optimized using AdamW~\citep{Loshchilov-2019-Decoupled} with fine-tuned step sizes and weight decay factors. For MAGD, we use $\bar{\eta}_t=0.01$, $\gamma=0.25$, and $\rho=10^{-4}$. Figure~\ref{fig:LLM} shows that MAGD is competitive with Muon and exhibits similar robustness across step sizes.

\section{Conclusion}\label{sec:conclu}
We studied orthogonalized momentum for locally Lipschitz matrix optimization and established a sharp distinction between fixed and coupled momentum schedules. For every fixed momentum factor $\beta \in [0,1)$, Muon can fail to approach the global optimal solution of a convex Lipschitz objective from almost every initialization, when the step sizes adapt to the full gradient history. In contrast, coupling the momentum factor to a diminishing step size through $\beta_t=1-\tau\eta_t$ can restore asymptotic convergence for inexact Muon under boundedness and vanishing polar error. In particular, every cluster point is stationary, the momentum converges to $0$, and the objective values converge. Under convexity, this value coincides with the global minimum. These results identify the interaction between fixed momentum and orthogonalization as a fundamental obstruction in nonsmooth optimization.

To obtain a finite-time guarantee, we proposed MAGD, which adaptively combines an inexact Muon branch with a gradient descent branch. It retains stationary cluster points and convergent objective values for nonconvex objectives and finds an $\epsilon$-optimal iterate within $O(\min\{m,n\}\epsilon^{-2})$ iterations for convex objectives. We derive the lower bound to demonstrate the optimal dependence on $\min\{m,n\}$ and $\epsilon$. The numerical results on the synthetic problems, image classification and LLM pretraining illustrate that MAGD safeguards the orthogonalized update while retaining its empirical performance. 

Several questions remain open. First, it is unclear whether Muon itself, without an auxiliary gradient branch, admits a uniform finite-time guarantee under an adaptive momentum and step-size schedule, even for convex Lipschitz objectives. Second, our asymptotic analysis requires the polar approximation error to vanish, whereas practical implementations use a fixed number of Newton-Schulz or PolarExpress iterations. Developing a theory that allows persistent approximation error would narrow the gap between theory and practice. Finally, extending the upper and lower bounds to other matrix-norm geometries would clarify which phenomena are specific to operator-norm optimization and which reflect more general principles of matrix-aware methods.

\section*{Acknowledgments}
The first author is partially supported by a start-up fund at the University of Hong Kong and by the Hong Kong Research Grants Council under ECS project 27301425. The second author is partially supported by a start-up fund and the Early Career Scholarship Support Grant at Columbia University. JZ thanks Derek Long for useful discussions. This work used GPU at NCSA and Indiana Jetstream2 through allocation CIS261175 from the Advanced Cyberinfrastructure Coordination Ecosystem: Services \& Support (ACCESS) program \citep{Boerner-2023-Access}, which is supported by U.S. National Science Foundation grants \#2138259, \#2138286, \#2138307, \#2137603, and \#2138296. We acknowledge the GPU support from Columbia University's Insomnia computer cluster.

\bibliographystyle{plainnat}
\bibliography{ref}

\end{document}